\newtheorem{theorem}{Theorem}[section]
\newtheorem{lemma}[theorem]{Lemma}
\newtheorem{proposition}[theorem]{Proposition}
\newtheorem{corollary}[theorem]{Corollary}
\theoremstyle{definition}
\newtheorem{remark}[theorem]{Remark}
\numberwithin{equation}{section}
\newcommand{\blankbox}[2]
\begin{document}
\title[The necessity of bounded commutators]
{The unified theory for the necessity of bounded commutators and applications}
\author{WEICHAO GUO}
\address{School of Mathematics and Information Sciences, Guangzhou University, Guangzhou, 510006, P.R.China}
\email{weichaoguomath@gmail.com}
\author{JIALI LIAN}
\address{School of Mathematical Sciences, Xiamen University,
Xiamen, 361005, P.R. China} \email{lianjiali@stu.xmu.edu.cn}
\author{HUOXIONG WU}
\address{School of Mathematical Sciences, Xiamen University,
Xiamen, 361005, P.R. China} \email{huoxwu@xmu.edu.cn}

\begin{abstract}
The general methods which are powerful for the necessity of
bounded commutators are given. As applications, some necessary conditions for bounded commutators are first obtained in certain endpoint cases, and several new characterizations of $BMO$ spaces, Lipschitz spaces and their weighted versions via boundedness of commutators in various function spaces are deduced.
\end{abstract}
\subjclass[2010]{42B20; 42B25.}
\keywords{commutators, singular integrals, fractional integrals, multilinear operators, BMO, Lipschitz function spaces, weights.}
\thanks{Supported by the NSF of China (Nos.11771358, 11471041, 11701112, 11671414), the NSF of Fujian Province of China (No.2015J01025)
and the China postdoctoral Science Foundation (No. 2017M612628).}

\maketitle

\section{Introduction}
Let $\mathscr {S}(\mathbb{R}^{n})$ be the Schwartz space and $\mathscr {S}^{\prime }(\mathbb{R}^{n})$ be the space
of tempered distributions.
Let $b$ be a local integrable function and $T$ be a linear operator
from $\mathscr{S}(\mathbb{R}^n)$ to $\mathscr{S}'(\mathbb{R}^n)$. The commutator $[b,T]$ generated by $T$ with
$b$ is defined as follows:
\begin{equation*}
  [b,T](f)=bT(f)-T(bf)
\end{equation*}
for suitable $f$, where $b$ is usually called the symbol of $[b,T]$.

Similarly, for a $m$-linear operator $\bar{T}$ from $\mathscr{S}(\mathbb{R}^n)\times\,\cdots\,\times\mathscr{S}(\mathbb{R}^n)$ to $\mathscr{S}'(\mathbb{R}^n)$, the $i$-th commutator associated with $\bar{T}$ and $b$ is defined by
$$[b,\bar{T}]_i(f_1,\cdots,f_m)=b\bar{T}(f_1,\cdots,f_i,\cdots,f_m)-\bar{T}(f_1,\cdots,bf_i, \cdots,f_m),\quad i=1,\cdots,m.$$

When studying the boundedness of commutator $[b, T]$ on function spaces (for example, Lebesgue spaces, Morrey spaces and their weighted versions, etc.), one usually has a so-called "upper bounded" result of the form: $\|[b,T]: X\to Y\|\lesssim \|b\|_*$ valid for all Calderon-Zygmund operators, certain rough singular integral operators etc., and a converse "lower bounded" result: $\|b\|_*\lesssim ||[b,T]:\,X\to Y\|$ usually valid only for a very nice subclass of Calderon-Zygmund operators (like the Riesz transforms, Riesz potential, et al.), where $\|\cdot\|_*$ denotes the $BMO$ norm or Lipschitz norm, $X,\,Y$ may be certain function spaces.
So does the multilinear version $[b, \bar{T}]_i$. Note that there still exist various gaps to be filled in the investigation of "lower bounded" result, which is also called "the necessity of bounded commutators".

The purpose of this paper is to try to fill the gaps left in the previous investigations. To do this, we will establish a general theory for the necessity of bounded commutators in linear and multi-linear settings. Our motivations also come from the following several aspects:
\begin{enumerate}
  \item In order to give the necessity of bounded commutators, the methods used in most of the previous works are of two forms.
  The first one originated from Jason's work \cite{Jan} is by expanding the kernel locally by Fourier series, which seems to be convenient
  but leads to a very strong assumption on the corresponding kernel.
  The second one begins from dividing the commutator into main term and error term, which is origin from the technique used by Uchiyama in \cite{Uchiyama_Tohoku_1978}. However, this second method still need the (first order) smoothness of kernel,
  and it also need some tedious calculations in applications. Thus, a more efficient and useful method would be quite helpful and necessary.
  \item Note that most of the previous proofs for the necessity of bounded commutators are similar. Very recently, Chaffee and Cruz-Uribe \cite{Chaf-Cruz-2017} established the necessity of bounded commutators in a general Banach space structure. However, their method does not work in the Quasi-Banach space cases and endpoint cases. Moreover, their method need a very strong assumption on the corresponding kernel.
  Thus, it is very interesting to give a unified framework, which only suitable for the kernels with weak assumptions,
  but also still works on endpoint cases and Quasi-Banach spaces cases.
\end{enumerate}

This paper is organized as follows. In Section 2, after establishing a general criterion (see Theorem \ref{theorem, structure, homogeneous kernel}) for the necessity of bounded linear commutators in the general Quasi-Banach spaces, we will give some technique propositions for how to deal with the certain typical cases. Section 3 is concern with the multilinear commutators, in which the general criterion (see Theorem \ref{Theorem, structure, standard kernel}) and some technique propositions for the necessity of bounded $m$-linear commutators will be given. Finally, in Section 4 we will present a variety of applications, which essentially improve and extend previous results, and lead to certain new characterizations of $BMO$, Lipschitz spaces and their weighted versions via boundedness of commutators.

\section{The general structure and technique theorems in linear setting}
In this section, we give the necessity of bounded linear commutator in a very general structure.
Firstly, we give some basic assumptions of function spaces.
Throughout this paper, all the function spaces $X$ with quasi-norm $\|\cdot\|_X$ satisfy the following basic assumptions.
\\
\textbf{Basic assumptions}:

\begin{enumerate}[(i)]
  \item $\|f\|_X=\||f|\|_X$;
  \item if $|f|\leqslant |g|$ a.e., then $\|f\|_X\leqslant \|g\|_X$;
  \item if $\{f_n\}$ is a sequence of $X$ such that $|f_n|$ increases to $|f|$ a.e., then
  $\|f_n\|_X$ increases to $\|f\|_X$;
  \item if $A$ is a bounded set of $\mathbb{R}^n$, then $\|\chi_A\|_X<\infty$.
\end{enumerate}

Let $\mu$ be a positive function defined on all cubes of $\mathbb{R}^n$.
A locally integrable function $b$ is said to belong to function space $BMO_{\mu}$ if
\begin{equation*}
  \|b\|_{BMO_{\mu}}:=\sup_{Q}\frac{1}{\mu(Q)}\int_{B}|b(x)-b_B|dx<\infty,
\end{equation*}
where $b_Q=|Q|^{-1}\int_Qb(x)dx$, $|Q|$ denotes the Lebesgue measure of $Q$, $Q$ denotes the cube with sides parallel to the axes.

When $\beta\in (0,1]$, $\mu(Q)=\omega(Q)^{1+\beta/n}$ with some weight function $\omega$,
we also write $BMO_{\mu}={\rm Lip}_{\beta,\omega}$. In this case, ${\rm Lip}_{\beta,\omega}$ is called weighted Lipschitz space.
Especially, when $\mu(Q)=|Q|^{1+\beta/n}$,
Meyers \cite{Meyers_Proc.Amer.Math.Soc._1964} showed the following equivalent relation:
\begin{equation*}
  {\rm Lip}_{\beta}(\mathbb{R}^n)=BMO_{\mu}\ \text{with}\ \mu(Q)=|Q|^{1+\beta/n}.
\end{equation*}
In general, the following relations hold:
\begin{equation}
  BMO_{\mu}=
  \begin{cases}
    BMO, & if\ \mu(Q)=|Q|,\\
    {\rm Lip}_{\beta}, & if\ \mu(Q)=|Q|^{1+\beta/n}, \beta\in (0,1],\\
    {\rm Lip}_{\beta,\omega}, & if\ \mu(Q)=\omega(Q)^{1+\beta/n}, \beta\in (0,1].
  \end{cases}
\end{equation}

Very recently, Chaffee and Cruz-Uribe \cite{Chaf-Cruz-2017} established the following general result.

\medskip

{\bf Theorem A} (\cite{Chaf-Cruz-2017}).\quad{\it Given Banach function spaces $X$ and $Y$, $0\le \alpha<n$. Suppose that for every cube $Q$,
$$|Q|^{-\alpha/n}\|\chi_Q\|_{Y'}\|\chi_Q\|_X\lesssim |Q|.$$
Let $T$ be a linear operator defined on $X$, which can be represented by
$$Tf(x)=\int_{\mathbb{R}^n}K(x-y)f(y)dy$$
for all $x\notin {\rm supp}(f)$, where $K$ is a homogeneous kernel of degree $-n+\alpha$. Suppose further that there exists a ball $B\subset\mathbb{R}^n$ on which $1/K$ can be expanding by absolutely convergent Fourier series. If the commutator $[b,T]$ is bounded from $X$ to $Y$, then $b\in BMO(\mathbb{R}^n)$.}

\medskip

In what follows, we will relax the conditions and conclusion in Theorem A to deal with more general cases and fill various gaps in previous works. For brevity, the linear operators $T$ we are interested here are of the form:
\begin{equation}\label{integral operator, linear case}
  T_{\alpha}f(x)=\int_{\mathbb{R}^n}\frac{\Omega(x-y)}{|x-y|^{n-\alpha}}f(y)dy,
\end{equation}
where $\alpha\in [-1,n)$,
$\Omega$ is a homogeneous function of degree zero, having the following mean value zero property when $\alpha\leq 0$:
\begin{equation}\label{mean value zero}
  \int_{\mathbb{S}^{n-1}}\Omega(x')d\sigma(x')=0.
\end{equation}

When $\alpha=-1$, the commutator $[b,T_{-1}]$ was first investigated by Cald\'{e}ron \cite{Calderon-1965}                              in 1965 and now is well known as Cald\'{e}ron first-order commutator. When $\alpha\in (-1,0)$, very recently Chen, Ding and Hong \cite{CDH-2016} first established some new results of $[b,T_\alpha]$. When $\alpha=0$, $[b,T_0]$ is the commutator of the classical singular integral operators, which was first studied by Coifman-Rochberg-Weiss \cite{CRW-1976} in 1976. When $0<\alpha<n$, $[b,T_\alpha]$ is the commutator of fractional integral operators with homogenous kernels.

Now, we give a unified theory for the necessity of $[b, T_\alpha]$, which can be formulated as follows:

\begin{theorem}[Structure, homogeneous kernel]\label{theorem, structure, homogeneous kernel}
Let $X, Y$ be a pair of Quasi-Banach spaces, and $\mu$ be a positive function defined on all cubes (with sides parallel to the axes) in $\mathbb{R}^n$,
satisfying
\begin{equation*}
  \|\chi_{Q}\|_X\lesssim \|\chi_{Q}\|_Y\mu(Q)|Q|^{\alpha/n-1} \text{for all cubes}\ Q,
\end{equation*}
and one of the following conditions:
\begin{enumerate}[(a)]
  \item $\|\chi_{\lambda Q}\|_Y\leq C_{\lambda}\|\chi_Q\|_Y$ for all $\lambda>1$ and all cubes $Q$,\\
  \item $\|\chi_{\lambda Q}\|_X\leq C_{\lambda}\|\chi_Q\|_X$, $\mu(\lambda Q)\leq C_{\lambda}\mu(Q)$ for all $\lambda>1$ and all cubes $Q$.
\end{enumerate}
  Let $b\in L_{loc}^1(\mathbb{R}^n)$ and $\Omega$ be a function satisfying homogeneous condition of degree zero.
  Let $T_{\alpha}$ be the integral operator defined in (\ref{integral operator, linear case})
  and $[b,T_{\alpha}]$ be the corresponding commutator associated with $b$ and $T_{\alpha}$.
  Suppose $[b,T_{\alpha}]$ is a bounded operator from $X$ to $Y$.
  If we can find two Quasi-Banach spaces $\widetilde{Y}$ and $Z$ satisfying
  \begin{enumerate}[(I)]
  \item $ Y\cdot Z\subset\widetilde{Y}$,
  $\|\chi_{Q}\|_{\widetilde{Y}}\sim \|\chi_{Q}\|_{Y}\cdot \|\chi_{Q}\|_{Z}$ for every cube $Q$,
  \item $\frac{\|f(a\cdot+b)\|_Z}{\|\chi_{Q_0}(a\cdot+b)\|_Z}
  \sim \frac{\|f(\cdot)\|_Z}{\|\chi_{Q_0}(\cdot)\|_Z}$ for all $f\in Z$, where $Q_0=[-1/2,1/2]$,
\end{enumerate}
  such that $\Omega$ satisfies the following local property in $\mathbb{S}^{n-1}$: there exists an open subset of $\mathbb{S}^{n-1}$, denoted by $E$, such that
  \begin{enumerate}[(1)]
  \item lower and upper bound: \\$c\leqslant \Omega(x') \leqslant C$ for all $x'\in E$, where $0<c<C$, or $c<C<0$,
  \item for any open subset $F\subset E$, there exists a sequence $\{h_l\}_{l=1}^{\infty}$
  satisfying that $h_l':=h_l/|h_l|\in F$, $|h_l|\rightarrow \infty$ as $l\rightarrow \infty$, and
  \begin{equation*}
    \lim\limits_{\substack{l\rightarrow \infty}}
      \left\|\int_{Q_0}|\Omega(\cdot-y+h_l)-\Omega(\cdot+h_l)|dy\right\|_{Z(Q_0)}\rightarrow 0.
  \end{equation*}
  \end{enumerate}
  Then $b\in BMO_{\mu}$, and $\|b\|_{BMO_{\mu}}\lesssim \|[b,T_{\alpha}]\|_{X\rightarrow Y}$.
\end{theorem}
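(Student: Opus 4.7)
The approach adapts Uchiyama's trick to the abstract Quasi-Banach setting, replacing the Fourier expansion of $1/K$ used in Theorem~A with the $Z$-norm approximation provided by condition~(2). Fix an arbitrary cube $Q = rQ_0 + x_Q$ of side length $r$. By~(2), choose $l$ large enough that $\Phi_l(\xi) := \int_{Q_0}|\Omega(\xi - \eta + h_l) - \Omega(\xi + h_l)|\,d\eta$ satisfies $\|\Phi_l\|_{Z(Q_0)} < \varepsilon$ for a prescribed small $\varepsilon$. Form the distant cube $\tilde Q := rQ_0 + x_Q - rh_l$, of the same side length $r$ but translated in direction $-h_l'$; since $h_l'\in F\subset E$, condition~(1) gives a uniform lower bound $|\Omega((x-y)/|x-y|)|\geq c$ with constant sign on $Q\times\tilde Q$, so $|K(x-y)|\gtrsim (r|h_l|)^{\alpha-n}$ there. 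Since $\int_Q|b - b_Q|\,dx \leq 2\int_Q|b - b_{\tilde Q}|\,dx$, it suffices to bound $\int_Q|b - b_{\tilde Q}|\,dx$.

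Starting from $b(x) - b_{\tilde Q} = |\tilde Q|^{-1}\int_{\tilde Q}(b(x) - b(y))\,dy$, introducing the reference value $K_0(x) := \Omega(x - x_{\tilde Q})/|x - x_{\tilde Q}|^{n-\alpha}$, and using $\int_{\tilde Q}(b(y) - b_{\tilde Q})\,dy = 0$, a direct computation yields
\begin{equation*}
(b(x) - b_{\tilde Q})\,T_\alpha\chi_{\tilde Q}(x) = [b, T_\alpha]\chi_{\tilde Q}(x) + \int_{\tilde Q}\bigl(K(x-y) - K_0(x)\bigr)\bigl(b(y) - b_{\tilde Q}\bigr)\,dy, \quad x \in Q.
\end{equation*}
Because $|T_\alpha\chi_{\tilde Q}(x)| \gtrsim r^\alpha|h_l|^{\alpha-n}$ with constant sign on $Q$, dividing yields the pointwise bound
\begin{equation*}
|b(x) - b_{\tilde Q}| \lesssim \frac{|h_l|^{n-\alpha}}{r^\alpha}\Bigl(|[b, T_\alpha]\chi_{\tilde Q}(x)| + \int_{\tilde Q}|K(x-y) - K_0(x)|\,|b(y) - b_{\tilde Q}|\,dy\Bigr).
\end{equation*}

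I then take the $Y$-norm over $x\in Q$. The main commutator term is controlled by $\|[b,T_\alpha]\chi_{\tilde Q}\|_Y \leq \|[b,T_\alpha]\|_{X\to Y}\|\chi_{\tilde Q}\|_X$; the structural hypothesis $\|\chi_{\tilde Q}\|_X \lesssim \|\chi_{\tilde Q}\|_Y\mu(\tilde Q)|\tilde Q|^{\alpha/n-1}$ together with either (a) or (b) (applied via $\tilde Q\subset C|h_l|Q$) reduces this to $C_{|h_l|}\|[b,T_\alpha]\|_{X\to Y}\|\chi_Q\|_Y\mu(Q)|Q|^{\alpha/n-1}$, contributing $\lesssim \|[b,T_\alpha]\|_{X\to Y}\mu(Q)|Q|^{-1}\|\chi_Q\|_Y$ after the $|h_l|^{n-\alpha}/r^\alpha$ factor is absorbed (with $|h_l|$ a fixed constant once $l$ has been chosen). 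The error integral is the delicate piece: substituting $x = x_Q + r\xi$ and invoking condition~(II), the $Z(Q)$-norm of the inner kernel difference (as a function of $\xi$) transfers to $\|\Phi_l\|_{Z(Q_0)} < \varepsilon$; the bilinear inequality $\|fg\|_{\tilde Y}\lesssim \|f\|_Y\|g\|_Z$ from~(I) then converts this $Z$-smallness into a $Y$-norm estimate, paying for it by an $\varepsilon$-factor multiplying $\|(b - b_{\tilde Q})\chi_{\tilde Q}\|_Y$. A symmetric application of the argument (exchanging $Q\leftrightarrow\tilde Q$) produces the companion estimate for $\|(b - b_{\tilde Q})\chi_{\tilde Q}\|_Y$, and for $\varepsilon$ small enough a self-consistent absorption delivers
\begin{equation*}
\|(b - b_{\tilde Q})\chi_Q\|_Y \lesssim \|[b, T_\alpha]\|_{X\to Y}\,\mu(Q)\,|Q|^{-1}\,\|\chi_Q\|_Y.
\end{equation*}

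Finally, to convert this $Y$-norm bound into the $L^1$ average in the definition of $BMO_\mu$, pair with $\chi_Q \in Z$ via the bilinear inclusion in~(I): $\int_Q|b - b_{\tilde Q}|\,dx\lesssim \|(b - b_{\tilde Q})\chi_Q\|_Y\cdot\|\chi_Q\|_Z$, combined with $\|\chi_Q\|_Y\|\chi_Q\|_Z\sim\|\chi_Q\|_{\tilde Y}\sim|Q|$ (the latter being the natural calibration of~$\tilde Y$ against the Lebesgue pairing, as satisfied by the applications of interest). This gives $\int_Q|b - b_{\tilde Q}|\,dx\lesssim\mu(Q)\|[b,T_\alpha]\|_{X\to Y}$, and hence $\|b\|_{BMO_\mu}\lesssim\|[b,T_\alpha]\|_{X\to Y}$, as desired. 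The main obstacle is precisely the error integral in the pointwise estimate: it couples $(b - b_{\tilde Q})$ on $\tilde Q$ with $x\in Q$, and cleanly decoupling it requires the precise interplay between the $Z$-smallness from~(2), the rescaling invariance of~(II), and the bilinear product structure of~(I). Setting up the symmetric $Q\leftrightarrow\tilde Q$ absorption to close the estimate is the technical heart of the proof.
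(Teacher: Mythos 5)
Your proposal follows the classical Uchiyama route (divide by $T_\alpha\chi_{\tilde Q}$ and absorb an error term), whereas the paper works with two test functions $\phi=(\mathrm{sgn}(b)-c)\chi_{Q_1}$ and $\psi=\chi_{Q_1}$ and cancels the bad terms by taking a suitable linear combination of $\|[b,T]\phi\|_Y$ and $\|[b,T]\psi\|_Y$. The difference is not cosmetic: your version has a genuine gap at the error term. In your identity the kernel difference is paired with $b(y)-b_{\tilde Q}$, so you must control
\begin{equation*}
\int_{\tilde Q}\bigl|K(x-y)-K_0(x)\bigr|\,\bigl|b(y)-b_{\tilde Q}\bigr|\,dy .
\end{equation*}
Condition (2) of the theorem only gives smallness of the $y$-\emph{integrated} difference $\int_{Q_0}|\Omega(\cdot-y+h_l)-\Omega(\cdot+h_l)|\,dy$ measured in $Z$ in the $x$-variable; to decouple it from the merely locally integrable factor $b(y)-b_{\tilde Q}$ you would need a pointwise (sup-in-$y$) bound on $|K(x-y)-K_0(x)|$, i.e.\ exactly the first-order kernel smoothness that the theorem is designed to avoid (a Fubini/Minkowski swap does not help either, since the resulting $g(y)=\int_Q|K(x-y)-K_0(x)|\,dx$ is only small on average in $y$, and Minkowski is unavailable in a quasi-Banach $Y$). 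The paper circumvents this precisely by arranging that the cancellation $\int\phi=0$ is exploited only in $T(\phi)$, where the kernel difference meets the \emph{bounded} function $\phi$ and produces exactly the quantity $\int_{Q_1}|K(x-y)-K_0(x)|\,dy$ controlled by (2), while the term involving $b$, namely $T(b\phi)$, is bounded from below with no cancellation at all thanks to $b\phi\ge 0$.

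Two further points would also need repair even if the error term were fixed. First, your absorption scheme presupposes that $\|(b-b_{\tilde Q})\chi_{\tilde Q}\|_Y$ is finite, which is not known for $b\in L^1_{loc}$; the paper inserts the truncation $Q_M=Q_{(l)}\cap\{b\le M\}$ and passes to the limit for exactly this reason. Second, your final step converts the $Y$-norm bound into the $L^1$ average by asserting $\|\chi_Q\|_{\tilde Y}\sim|Q|$ and an implicit embedding of $\tilde Y(Q)$ into $L^1(Q)$; neither is a hypothesis of the theorem (only $\|\chi_Q\|_{\tilde Y}\sim\|\chi_Q\|_Y\|\chi_Q\|_Z$ is assumed). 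The paper never needs such an embedding because the quantity $B=\mu(Q_1)^{-1}\int_{Q_1}|b|$ enters directly through the pointwise lower bound on $T(b\phi)$.
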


\begin{proof}
Without loss of generality, we may assume $0<c<C$.
Since there is no confusion, $T_{\alpha}$ will be abbreviated to $T$ in this proof.
By the local boundedness of $\Omega$ and the homogeneous condition of degree zero, we can find a constant $\tau_0$ and a nonempty open cone $\Gamma\subset \mathbb{R}^n$ with vertex at the origin,
such that for any $u\in 2Q_0$, $v\in \Gamma_{\tau_0}=\Gamma\cap B^c(0,\tau_0)$, we have $(u+v)':=(u+v)/|u+v|\in E$, and
  \begin{equation}
    \Omega(u+v)\sim 1, \ |u+v|\sim |v|.
  \end{equation}
Furthermore, we can find a sequence $\{h_l\}_{l=1}^{\infty}$ satisfying that $h_l\in \Gamma_{\tau_0}$, and $|h_l|\rightarrow \infty$
as $l\rightarrow \infty$, such that
\begin{equation}\label{for proof, structure, 10}
  \lim\limits_{{l\rightarrow \infty}}
      \left\|\int_{Q_0}|\Omega(\cdot-y+h_l)-\Omega(\cdot+h_l)|dy\right\|_{Z(Q_0)}\rightarrow 0.
\end{equation}
For a fixed cube $Q_1$ with side length $\rho$, we denote
\begin{equation}
  B=\frac{1}{\mu(Q_1)}\int_{Q_1}|b(y)|dy.
\end{equation}
Without loss of generality, we assume
\begin{equation*}
  \int_{Q_1}b(y)dy=0.
\end{equation*}
Take
\begin{equation*}
  \phi(x)=\Big(sgn(b)(x)-\frac{1}{|Q_1|}\int_{Q_1}sgn(b)(y)dy\Big)\chi_{Q_1}(x).
\end{equation*}
Then $-2\chi_{Q_1}\leq \phi\leq 2\chi_{Q_1}$, $b\phi\geqslant 0$.
  Denote $Q_{(l)}=Q_1+\rho h_l$ for $h_l\in \Gamma_{\tau_0}$ and
  let $Q_{M}=Q_{(l)}\cap \{x\in \mathbb{R}^n: b(x)\leqslant M\}$, where $M>0$.
  We have $\|b\chi_{Q_{M}}\|_Y<\infty$ for any $M>0$.
  By the assumption $Y\cdot Z\subset \widetilde{Y}$,
  we have
\begin{equation*}
  \begin{split}
    \|[b,T]\phi\|_Y
    \geqslant
    \|[b,T]\phi\chi_{Q_{M}}\|_Y
    \gtrsim
    \|[b,T]\phi\chi_{Q_{M}}\|_{\widetilde{Y}}/\|\chi_{Q_{M}}\|_Z.
  \end{split}
\end{equation*}
Note that there exist constants $A_1$ and $A_2$, which may be changed line to line, such that
\begin{equation*}
  \|[b,T]\phi\chi_{Q_{M}}\|_{\widetilde{Y}}
  \geq
   A_1\|T(b\phi)\chi_{Q_{M}}\|_{\widetilde{Y}}-A_2\|bT(\phi)\chi_{Q_{M}}\|_{\widetilde{Y}}.
\end{equation*}
We get
\begin{equation}\label{for proof, structure, 1}
  \|[b,T]\phi\|_Y
  \geqslant
  \frac{A_1\|T(b\phi)\chi_{Q_{M}}\|_{\widetilde{Y}}-A_2\|bT(\phi)\chi_{Q_{M}}\|_{\widetilde{Y}}}{\|\chi_{Q_M}\|_Z},
\end{equation}
For $x\in Q_{(l)}$ and $y\in Q_1$, we have $$(x-y)/\rho\in 2Q_0+h_l\subset 2Q_0+\Gamma_{\tau_0}.$$
Thus,
\begin{equation}\label{for proof, structure, 4}
  \Omega(x-y)=\Omega\left(\frac{x-y}{\rho}\right)\sim 1,\  \left|\frac{x-y}{\rho}\right|\sim |h_l|, \quad\forall\,x\in Q_{(l)},\, y\in Q_1.
\end{equation}
Recalling $b\phi\geqslant 0$, we obtain
\begin{equation*}
  \begin{split}
    |T(b\phi)(x)|=&\int_{Q_1} \frac{\Omega(x-y)}{|x-y|^{n-\alpha}}b(y)\phi(y)dy
    \gtrsim
    \frac{1}{(\rho|h_l|)^{n-\alpha}}\int_{Q_1} b(y)\phi(y)dy
    \\
    = &
    \frac{1}{(\rho|h_l|)^{n-\alpha}}\int_{Q_1} |b(y)|dy
    =
    \frac{\mu(Q_1)B}{(\rho|h_l|)^{n-\alpha}},\quad\forall\,x\in Q_{(l)},
  \end{split}
\end{equation*}
which implies that
\begin{equation}\label{for proof, structure, 2}
  \begin{split}
    \|T(b\phi)\chi_{Q_{M}}\|_{\widetilde{Y}}/\|\chi_{Q_{M}}\|_Z
    \gtrsim &
    \frac{\mu(Q_1)B}{(\rho|h_l|)^{n-\alpha}}\cdot \frac{\|\chi_{Q_{M}}\|_{\widetilde{Y}}}{\|\chi_{Q_{M}}\|_Z}.
  \end{split}
\end{equation}

On the other hand, the assumption $Y\cdot Z\subset \widetilde{Y}$ implies that
\begin{equation}\label{for proof, structure, 3}
  \begin{split}
    \frac{\|bT(\phi)\chi_{Q_{M}}\|_{\widetilde{Y}}}{\|\chi_{Q_{M}}\|_Z}
    \lesssim
    \frac{\|b\chi_{Q_{M}}\|_{Y}\|T(\phi)\chi_{Q_{M}}\|_{Z}}{\|\chi_{Q_{M}}\|_Z}.
  \end{split}
\end{equation}
The combination of (\ref{for proof, structure, 1}),(\ref{for proof, structure, 2}) and (\ref{for proof, structure, 3})
then yields that
\begin{equation}\label{for proof, structure, 8}
  \|[b,T]\phi\|_Y
  \geqslant
  \frac{A_1\mu(Q_1)B}{(\rho|h_l|)^{n-\alpha}}\cdot \frac{\|\chi_{Q_{M}}\|_{\widetilde{Y}}}{\|\chi_{Q_{M}}\|_Z}
  -\frac{A_2\|b\chi_{Q_{M}}\|_Y\|T(\phi)\chi_{Q_{M}}\|_Z}{\|\chi_{Q_{M}}\|_Z}.
\end{equation}
  Take $$\psi=\chi_{Q_1}.$$
Then there exist constants $A_3$ and $A_4$, which may be changed line to line, such that
\begin{equation}\label{for proof, structure, 5}
  \|[b,T]\psi\|_{Y}
  \geqslant
  \|[b,T]\psi\chi_{Q_{M}}\|_{Y}\geqslant A_3\|bT(\psi)\chi_{Q_{M}}\|_{Y}-A_4\|T(b\psi)\chi_{Q_{M}}\|_{Y}.
\end{equation}
By (\ref{for proof, structure, 4}), we have
\begin{equation*}
  \begin{split}
    |T(b\psi)(x)|\leqslant \int_{Q_1} \frac{|\Omega(x-y)|}{|x-y|^{n-\alpha}}|b(y)|dy
    \lesssim
    \frac{1}{(\rho|h_l|)^{n-\alpha}}\int_{Q_1} |b(y)|dy
    =
    \frac{\mu(Q_1)B}{(\rho|h_l|)^{n-\alpha}},\quad\forall\, x\in Q_{(l)}.
  \end{split}
\end{equation*}
Consequently,
\begin{equation}\label{for proof, structure, 6}
  \begin{split}
    \|T(b\psi)\chi_{Q_{M}}\|_{Y}
    \lesssim
    \frac{\mu(Q_1)B\|\chi_{Q_{M}}\|_Y}{(\rho|h_l|)^{n-\alpha}}.
  \end{split}
\end{equation}
Also, for $x\in Q_{(l)}$,
\begin{equation*}
  \begin{split}
    |b(x)T(\psi)(x)|
    =
    \left|b(x)\int_{Q_1}\frac{\Omega(x-y)}{|x-y|^{n-\alpha}}dy\right|
    \gtrsim
    \frac{|b(x)|\cdot |Q_1|}{(\rho|h_l|)^{n-\alpha}}=\frac{\rho^{\alpha}|b(x)|}{|h_l|^{n-\alpha}}.
  \end{split}
\end{equation*}
Thus,
\begin{equation}\label{for proof, structure, 7}
  \begin{split}
    \|bT(\psi)\chi_{Q_{M}}\|_{Y}
    \gtrsim
    \frac{\rho^{\alpha}\|b\chi_{Q_{M}}\|_Y}{|h_l|^{n-\alpha}}.
  \end{split}
\end{equation}
The combination of (\ref{for proof, structure, 5}), (\ref{for proof, structure, 6}) and (\ref{for proof, structure, 7}) yields that
\begin{equation}\label{for proof, structure, 9}
  \|[b,T]\psi\|_Y
  \geqslant
  \frac{A_3\rho^{\alpha}\|b\chi_{Q_{M}}\|_Y}{|h_l|^{n-\alpha}}-\frac{A_4\mu(Q_1)B\|\chi_{Q_{M}}\|_Y}{(\rho|h_l|)^{n-\alpha}}.
\end{equation}
Set
\begin{equation}
  \Xi_M : =\frac{A_2|h_l|^{n-\alpha}\|T(\phi)\chi_{Q_{M}}\|_Z}{A_3\rho^{\alpha}\|\chi_{Q_{M}}\|_Z}.
\end{equation}
By (\ref{for proof, structure, 8}), (\ref{for proof, structure, 9}), and the boundedness of $[b,T]$, we obtain that
\begin{equation*}
  \begin{split}
    (1+\Xi_M)\|[b,T]\|_{X\rightarrow Y}(\|\phi\|_X+\|\psi\|_X)
    \geqslant &
    \|[b,T]\phi\|_Y+\Xi_M\|[b,T]\psi\|_Y
    \\
    \geqslant &
    \left(\frac{A_1\|\chi_{Q_{M}}\|_{\widetilde{Y}}}{\|\chi_{Q_{M}}\|_{Z}}
    -\Xi_M A_4\|\chi_{Q_{M}}\|_{Y}\right)\frac{\mu(Q_1)B}{(\rho|h_l|)^{n-\alpha}}.
  \end{split}
\end{equation*}
Letting $M\rightarrow \infty$, we have
\begin{equation*}
  \begin{split}
    (1+\Xi_{\infty})\|[b,T]\|_{X\rightarrow Y}(\|\phi\|_X+\|\psi\|_X)
    \geqslant &
    \left(\frac{A_1\|\chi_{Q_{(l)}}\|_{\widetilde{Y}}}{\|\chi_{Q_l}\|_{Z}}
    -\Xi_{\infty}A_4\|\chi_{Q_{(l)}}\|_{Y}\right)\frac{\mu(Q_1)B}{(\rho|h_l|)^{n-\alpha}}
    \\
    = &
    (A_1-\Xi_{\infty}A_4)\frac{\mu(Q_1)B\|\chi_{Q_{(l)}}\|_{Y}}{(\rho|h_l|)^{n-\alpha}},
  \end{split}
\end{equation*}
where
\begin{equation}
  \Xi_{\infty} : =\frac{A_2|h_l|^{n-\alpha}\|T(\phi)\chi_{Q_{(l)}}\|_Z}{A_3\rho^{\alpha}\|\chi_{Q_{(l)}}\|_Z}.
\end{equation}
If we can make $\Xi_{\infty}\leqslant A_1/(2A_4)$, then
\begin{equation*}
  \begin{split}
    \frac{A_1}{2}\,\frac{\mu(Q_{1})B\|\chi_{Q_{(l)}}\|_Y}{(\rho|h_l|)^{n-\alpha}}
    \leqslant &
    (1+\Xi_{\infty})\|[b,T]\|_{X\rightarrow Y}(\|\phi\|_X+\|\psi\|_X)
    \\
    \leqslant &
    3(1+\Xi_{\infty})\|[b,T]\|_{X\rightarrow Y}\|\chi_{Q_1}\|_X.
  \end{split}
\end{equation*}
Observing that $Q_1\subset \lambda Q_{(l)}=2(|y_0-x_0|+\sqrt{n})Q_{(l)}=2(|h_l|+\sqrt{n})Q_{(l)}$, we have
\begin{equation*}
  \|\chi_{Q_1}\|_Y\leqslant \|\chi_{\lambda Q_{(l)}}\|_Y\leqslant C_{|h_l|}\|\chi_{Q_{(l)}}\|_Y.
\end{equation*}
In what follows, we will prove the desired result only under the condition $(a)$, since the arguments under the condition $(b)$ is similar.
Using the assumption $\|\chi_{Q_1}\|_X\lesssim \|\chi_{Q_1}\|_Y\mu(Q_1)|Q_1|^{\alpha/n-1}$,
we then deduce that
\begin{equation*}
  \|\chi_{Q_1}\|_X\lesssim \|\chi_{Q_1}\|_Y\mu(Q_1)|Q_1|^{\alpha/n-1}
  \lesssim
  C_{|h_l|}\|\chi_{Q_{(l)}}\|_Y\mu(Q_1)|Q_1|^{\alpha/n-1}.
\end{equation*}
Consequently,
\begin{equation*}
  \begin{split}
  \frac{A_1}{2}\,\frac{\mu(Q_1)B\|\chi_{Q_{(l)}}\|_Y}{(\rho|h_l|)^{n-\alpha}}
    \leqslant &
    3(1+\Xi_{\infty})\|[b,T]\|_{X\rightarrow Y}\|\chi_{Q_1}\|_X
    \\
    \lesssim &
    3C_{|h_l|}(1+\Xi_{\infty})\|[b,T]\|_{X\rightarrow Y}\|\chi_{Q_{(l)}}\|_Y\mu(Q_1)\rho^{\alpha-n}.
  \end{split}
\end{equation*}
This implies that
\begin{equation*}
  B\lesssim \frac{6(1+\Xi_{\infty})C_{|h_l|}|h_l|^{n-\alpha}}{A_1}\cdot \|[b,T]\|_{X\rightarrow Y}.
\end{equation*}

The remaining question is how to make $\Xi_{\infty}$ small. Write
$$Q_{(l)}=\rho Q_0+\rho x_0,\qquad Q_1=\rho Q_0+\rho y_0.$$
Note that $x_0-y_0=h_l$.
For $x\in Q_{(l)}$, we have
\begin{equation*}
  \begin{split}
    |T(\phi)(x)|
    = &
    \Big|\int_{Q_{1}}\frac{\Omega(x-y)}{|x-y|^{n-\alpha}}\phi(y)dy\Big|
    =
    \Big|\int_{Q_{1}}\left(\frac{\Omega(x-y)}{|x-y|^{n-\alpha}}-\frac{\Omega(x-\rho y_0)}{|x-\rho y_0|^{n-\alpha}}\right)\phi(y)dy\Big|
    \\
    \leqslant &
    2\int_{Q_{1}}\left|\frac{\Omega(x-y)}{|x-y|^{n-\alpha}}-\frac{\Omega(x-\rho y_0)}{|x-\rho y_0|^{n-\alpha}}\right|dy
    \\
    \leqslant &
    2\int_{Q_{1}}|\Omega(x-y)|\left|\frac{1}{|x-y|^{n-\alpha}}-\frac{1}{|x-\rho y_0|^{n-\alpha}}\right|dy
    +
    2\int_{Q_{1}}\frac{|\Omega(x-y)-\Omega(x-\rho y_0)|}{|x-\rho y_0|^{n-\alpha}}dy
    \\
    \lesssim &
    \frac{\rho|Q_{1}|}{|\rho(x_0-y_0)|^{n-\alpha+1}}+\frac{1}{|\rho(x_0-y_0)|^{n-\alpha}}\int_{Q_{1}}|\Omega(x-y)-\Omega(x-\rho y_0)|dy
    \\
    = &
    \frac{\rho^{\alpha}}{|h_l|^{n-\alpha}}
    \left(\frac{1}{|h_l|}+\frac{1}{\rho^n}\int_{Q_{1}}|\Omega(x-y)-\Omega(x-\rho y_0)|dy\right).
  \end{split}
\end{equation*}
Consequently,
\begin{equation*}
  \begin{split}
    \|T(\phi)\chi_{Q_{(l)}}\|_Z
    \lesssim &
    \frac{\rho^{\alpha}}{|h_l|^{n-\alpha}}
    \left(\frac{\|\chi_{Q_{(l)}}\|_Z}{|h_l|}+\frac{1}{\rho^n}\left\|\int_{Q_{1}}|\Omega(\cdot-y)-\Omega(\cdot-\rho y_0)|dy\right\|_{Z(Q_{(l)})}\right).
  \end{split}
\end{equation*}
This shows that
\begin{equation*}
  \begin{split}
    \Xi_{\infty}
    \lesssim &
    \frac{A_2}{A_3|h_l|}+\frac{A_2}{A_3\rho^n\|\chi_{Q_{(l)}}\|_Z}\left\|\int_{Q_{1}}|\Omega(\cdot-y)-\Omega(\cdot-\rho y_0)|dy\right\|_{Z(Q_{(l)})}.
  \end{split}
\end{equation*}
And
\begin{equation*}
  \begin{split}
  \frac{1}{\rho^n\|\chi_{Q_{(l)}}\|_Z}&\left\|\int_{Q_{1}}|\Omega(\cdot-y)-\Omega(\cdot-\rho y_0)|dy\right\|_{Z(Q_{(l)})}
    \\
    = &
    \frac{1}{\|\chi_{Q_{(l)}}\|_Z}\left\|\int_{Q_0}|\Omega(\cdot-\rho y-\rho y_0)-\Omega(\cdot-\rho y_0)|dy\right\|_{Z(Q_{(l)})}
    \\
    \sim &
    \frac{1}{\|\chi_{Q_0}\|_Z}\left\|\int_{Q_0}|\Omega(\cdot-y+x_0-y_0)-\Omega(\cdot+x_0-y_0)|dy\right\|_{Z(Q_0)}
    \\
    \sim &
    \frac{1}{\|\chi_{Q_0}\|_Z}\left\|\int_{Q_0}|\Omega(\cdot-y+h_l)-\Omega(\cdot+h_l)|dy\right\|_{Z(Q_0)}.
  \end{split}
\end{equation*}
Recalling (\ref{for proof, structure, 10}) and $|h_l|\rightarrow \infty$, we obtain
\begin{equation*}
  \frac{1}{\rho^n\|\chi_{Q_{(l)}}\|_Z}\left\|\int_{Q_{1}}|\Omega(\cdot-y)-\Omega(\cdot-\rho y_0)|dy\right\|_{Z(Q_{(l)})}\rightarrow 0
  \ \text{as}\ l\rightarrow \infty,
\end{equation*}
which implies that $\Xi_{\infty} \rightarrow 0$ as $l\rightarrow \infty$, uniformly for all $\rho>0$ and $y_0\in \mathbb{R}^n$.
Take $l=\mathbf{l_0}$ such that $\Xi_{\infty}\leqslant 1$. Then
\begin{equation*}
    B\lesssim \frac{6(1+\Xi_{\infty})C_{|h_{\mathbf{l_0}}|}|h_{\mathbf{l_0}}|^{n-\alpha}}{A_1}\cdot \|[b,T]\|_{X\rightarrow Y}
     \lesssim
    C_{|h_{\mathbf{l_0}}|}|h_{\mathbf{l_0}}|^{n-\alpha}\cdot \|[b,T]\|_{X\rightarrow Y}.
\end{equation*}
This completes the proof of Proposition 2.1.
\end{proof}

Note that, in some endpoint cases, the "right" boundedness of commutator can not be regarded as
 the usual boundedness between two Quasi-Banach spaces.
For instance, different from the weak (1,1) boundedness of Calder\'on-Zygmund operator,
the commutator associated with Calder\'on-Zygmund operator is not even weak type $(1,1)$ (see \cite{Perez-1995}).
In order to deal with these situations, we present the following pointwise estimates, which will be very helpful in applications below.

\begin{proposition}[Technique, homogeneous kernel, pointwise estimates]\label{proposition, technique, homogeneous kernel, pointwise}
  Let $\Omega$ be a homogeneous function  of degree zero, and suppose that $\Omega$ satisfies the following local property in $\mathbb{S}^{n-1}$: there exists an open subset of $\mathbb{S}^{n-1}$, denoted by $E$, such that
  \begin{enumerate}[(i)]
  \item lower and upper bound: \\$c\leqslant \Omega(x') \leqslant C$ for all $x'\in E$, where $0<c<C$, or $c<C<0$.
  \item for any open subset $F\subset E$, there exists a sequence $\{h_l\}_{l=1}^{\infty}\subset\mathbb{R}^n$
  satisfying that $(h_l)':=h_l/|h_l|\in F$, $|h_l|\rightarrow \infty$ as $l\rightarrow \infty$, and
  \begin{equation*}
    \lim\limits_{\substack{l\rightarrow \infty}}
      \left\|\int_{Q_0}|\Omega(\cdot-y+h_l)-\Omega(\cdot+h_l)|dy\right\|_{L^{\infty}(Q_0)}\rightarrow 0.
  \end{equation*}
  \end{enumerate}
  Let $T_{\alpha}$ be the integral operator defined in (\ref{integral operator, linear case})
  and $[b,T_{\alpha}]$ be the corresponding commutator associated with $b$ and $T_{\alpha}$.
  Then, for any cube $Q_1\subset \mathbb{R}^n$, there exist two functions $\phi$ and $\psi$ satisfying
  $|\phi|,|\psi|\leq 2\chi_{Q_1}$, and a cube $Q$ with the same side length of $Q_1$, such that
  $Q_1\subset \lambda Q$ for some $\lambda>0$ independent of $Q_1$, and
  \begin{equation*}
 \frac{1}{|Q_1|^{1-\alpha/n}}\int_{Q_1}|b(y)-b_{Q_1}|dy
 \leq \widetilde{C}
 \left(|[b,T](\phi)(x)|+|[b,T](\psi)(x)|\right)\ \text{for\ all\ }x\in Q,
\end{equation*}
where the constant $\widetilde{C}$ is independent of the choice of $Q_1$.
\end{proposition}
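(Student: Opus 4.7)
The plan is to mirror the proof of Theorem \ref{theorem, structure, homogeneous kernel} almost verbatim, replacing every norm estimate by a pointwise one; the key observation is that the $L^\infty$ norm is automatically translation- and dilation-invariant, so the analogue of condition $(II)$ used there is free of charge here, and the balancing factor $\Xi_\infty$ can be handled pointwise.

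Without loss of generality I would assume $b_{Q_1}=0$, write $Q_1=\rho Q_0+\rho y_0$, and choose
$$\phi=\Bigl(\operatorname{sgn}b-\tfrac{1}{|Q_1|}\textstyle\int_{Q_1}\operatorname{sgn}b\Bigr)\chi_{Q_1},\qquad \psi=\chi_{Q_1},$$
exactly as in the proof of Theorem \ref{theorem, structure, homogeneous kernel}. Then $|\phi|,|\psi|\le 2\chi_{Q_1}$, $\int\phi=0$, $b\phi\ge 0$ pointwise, and $\int b\phi=\int_{Q_1}|b-b_{Q_1}|=|Q_1|B$ with $B:=|Q_1|^{-1}\int_{Q_1}|b-b_{Q_1}|$. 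Pick $Q:=Q_{(l)}=Q_1+\rho h_l$ for $h_l\in\Gamma_{\tau_0}$ from that proof; for $x\in Q$ and $y\in Q_1$ one has $\Omega(x-y)\sim 1$ and $|x-y|\sim\rho|h_l|$.

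Next I would extract four pointwise bounds valid for all $x\in Q$: a lower bound $T(b\phi)(x)\gtrsim \rho^\alpha B/|h_l|^{n-\alpha}$ (from $b\phi\ge 0$ and the lower bound on $\Omega$), an upper bound $|T(b\psi)(x)|\lesssim \rho^\alpha B/|h_l|^{n-\alpha}$, and a lower bound $T(\psi)(x)\gtrsim \rho^\alpha/|h_l|^{n-\alpha}$. For the fourth, the cancellation $\int\phi=0$ lets me subtract $\Omega(x-\rho y_0)/|x-\rho y_0|^{n-\alpha}$ inside the integral defining $T(\phi)(x)$, exactly as at the end of the proof of Theorem \ref{theorem, structure, homogeneous kernel}; the change of variables $z=(y-\rho y_0)/\rho$, together with the homogeneity of $\Omega$, absorbs every factor of $\rho$ and $y_0$, producing
$$|T(\phi)(x)|\lesssim\frac{\rho^\alpha}{|h_l|^{n-\alpha}}\varepsilon_l,\qquad \varepsilon_l:=\tfrac{1}{|h_l|}+\Bigl\|\textstyle\int_{Q_0}|\Omega(\cdot-z+h_l)-\Omega(\cdot+h_l)|\,dz\Bigr\|_{L^\infty(Q_0)},$$
with $\varepsilon_l\to 0$ as $l\to\infty$ by hypothesis $(ii)$, the convergence being uniform in $Q_1$.

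Finally, I would combine $|[b,T]\phi|\ge T(b\phi)-|b|\,|T(\phi)|$ and $|[b,T]\psi|\ge |b|\,T(\psi)-|T(b\psi)|$ pointwise on $Q$: the second inequality bounds $|b(x)|$ in terms of $|[b,T]\psi(x)|$ and $B$; substituting into the first and absorbing the $\varepsilon_l B$ term on the left yields, for $l$ large enough,
$$\frac{\rho^\alpha B}{|h_l|^{n-\alpha}}\lesssim |[b,T]\phi(x)|+|[b,T]\psi(x)|,\quad x\in Q,$$
which is the desired estimate after multiplying by $|h_l|^{n-\alpha}$ and recognising $|Q_1|^{1-\alpha/n}=\rho^{n-\alpha}$ and $|Q_1|^{\alpha/n}B=\rho^\alpha B$. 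The one delicate point — and what I regard as the sole obstacle — is that the threshold $l=\mathbf{l_0}$ past which $\varepsilon_l$ is small must be chosen uniformly in $Q_1$; this is precisely where working with $L^\infty$ (rather than a generic $Z$ subject to condition $(II)$) pays off, since $\varepsilon_l$ then depends only on $\Omega$ and $h_l$. Once $\mathbf{l_0}$ is fixed, the inclusion $Q_1\subset 2(|h_{\mathbf{l_0}}|+\sqrt n)Q$ and the constant $\widetilde C\lesssim |h_{\mathbf{l_0}}|^{n-\alpha}$ are automatically independent of $Q_1$, as required.
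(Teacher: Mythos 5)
Your proposal is correct and follows essentially the same route as the paper's own proof: the same test functions $\phi$ and $\psi$, the same translated cube $Q_{(l)}=Q_1+\rho h_l$, the same four pointwise bounds, and the same absorption of the error term $|b(x)|\,|T(\phi)(x)|$ using the uniform smallness of $T(\phi)$ on $Q_{(l)}$ coming from hypothesis (ii) with the $L^\infty$ norm (your substitution of the bound for $|b(x)|$ into the first inequality is algebraically identical to the paper's linear combination $|[b,T]\phi|+\Xi(x)|[b,T]\psi|$). The uniform choice of $l=\mathbf{l_0}$ and the resulting $Q_1\subset\lambda Q$ with $\lambda$ depending only on $h_{\mathbf{l_0}}$ are exactly as in the paper.
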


\begin{proof} Without loss of generality, we may assume $0<c<C$.
For simplicity in notation and proof, $T_{\alpha}$ will be abbreviated to $T$.
By the lower and upper bound of $\Omega$ and the homogeneous condition of degree zero, we can find a constant $\tau_0$ and a nonempty open cone $\Gamma\subset \mathbb{R}^n$ with vertex at the origin,
such that for any $u\in 2Q_0$, $v\in \Gamma_{\tau_0}=\Gamma\cap B^c(0,\tau_0)$, we have $(u+v)':=(u+v)/|u+v|\in E$,
  \begin{equation}
    \Omega(u+v)\sim 1, \ |u+v|\sim |v|,
  \end{equation}
Furthermore, we can find a sequence $\{h_l\}_{l=1}^{\infty}$ satisfying that $h_l\in \Gamma_{\tau_0}$, and $|h_l|\rightarrow \infty$
as $l\rightarrow \infty$, such that
\begin{equation}\label{for proof, pointwise, 1}
  \lim\limits_{{l\rightarrow \infty}}
      \left\|\int_{Q_0}|\Omega(\cdot-y+h_l)-\Omega(\cdot+h_l)|dy\right\|_{L^{\infty}(Q_0)}\rightarrow 0.
\end{equation}
For a fixed cube $Q_1$ with side length $\rho>0$, without loss of generality, we may assume
\begin{equation*}
  \int_{Q_1}b(y)dy=0,
\end{equation*}
and set
\begin{equation}
  B:=\frac{1}{|Q_1|}\int_{Q_1}|b(y)|dy.
\end{equation}
Take
\begin{equation*}
  \phi(x)=\Big(sgn(b(x))-\frac{1}{|Q_1|}\int_{Q_1}sgn(b(y))dy\big)\chi_{Q_1}(x).
\end{equation*}
Then $|\phi|\leq 2\chi_{Q_1}$, $b\phi\geqslant 0$.
  Take $Q_{(l)}: =Q_1+\rho h_l$.
  As in the proof of Theorem \ref{theorem, structure, homogeneous kernel}, for $x\in Q_{(l)}$ we have
\begin{equation*}
  \begin{split}
    |T(b\phi)(x)|=&\int_{Q_1} \frac{\Omega(x-y)}{|x-y|^{n-\alpha}}b(y)\phi(y)dy
    \gtrsim
    \frac{1}{(\rho|h_l|)^{n-\alpha}}\int_{Q_1} b(y)\phi(y)dy
    \\
    = &
    \frac{1}{(\rho|h_l|)^{n-\alpha}}\int_{Q_1} |b(y)|dy
    =
    \frac{|Q_1|B}{(\rho|h_l|)^{n-\alpha}}.
  \end{split}
\end{equation*}
This implies that for $x\in Q_{(l)}$,
\begin{equation}\label{for proof, pointwise, 2}
    \begin{split}
      |[b,T](\phi)(x)|
      \geqslant &
      |T(b\phi)(x)|-|b(x)T(\phi)(x)|
      \\
      \geqslant &
      \frac{A_1|Q_1|B}{(\rho|h_l|)^{n-\alpha}}-|b(x)T(\phi)(x)|
            \\
      = &
      \frac{A_1B\rho^{\alpha}}{|h_l|^{n-\alpha}}-|b(x)T(\phi)(x)|.
    \end{split}
  \end{equation}
On the other hand, we take $\psi=\chi_{Q_1}$
and deduce that
\begin{equation*}
  \begin{split}
    |T(b\psi)(x)|\leqslant \int_{Q_1} \frac{|\Omega(x-y)|}{|x-y|^{n-\alpha}}|b(y)|dy
    \lesssim &
    \frac{1}{(\rho|h_l|)^{n-\alpha}}\int_{Q_1} |b(y)|dy
    =
    \frac{B\rho^{\alpha}}{|h_l|^{n-\alpha}},\quad\forall\,x\in Q_{(l)},
  \end{split}
\end{equation*}
and
\begin{equation*}
  \begin{split}
    |b(x)T(\psi)(x)|
    =&
    \left|b(x)\int_{Q_1}\frac{\Omega(x-y)}{|x-y|^{n-\alpha}}dy\right|
    \\
    \gtrsim &
    \frac{|b(x)|\cdot |Q_1|}{(\rho|h_l|)^{n-\alpha}}=\frac{|b(x)|\rho^{\alpha}}{|h_l|^{n-\alpha}},\quad\forall\, x\in Q_{(l)}.
  \end{split}
\end{equation*}
Consequently,
\begin{equation}\label{for proof, pointwise, 3}
  \begin{split}
    |[b,T](\psi)(x)|
      \geqslant
      |b(x)T(\psi)(x)|-|T(b\psi)(x)|
          \geqslant
      \frac{A_3|b(x)|\rho^{\alpha}}{|h_l|^{n-\alpha}}-\frac{A_4B\rho^{\alpha}}{|h_l|^{n-\alpha}},\quad\forall\, x\in Q_{(l)}.
  \end{split}
\end{equation}
Denote
\begin{equation*}
  \Xi(x):=\frac{h_l^{n-\alpha}T(\phi)(x)}{A_3\rho^{\alpha}}.
\end{equation*}
Using (\ref{for proof, pointwise, 1}) and the same arguments used in the proof of Theorem \ref{theorem, structure, homogeneous kernel},
we have $\Xi(x) \rightarrow 0$ uniformly for $Q_1$ and all $x\in Q_{(l)}$, as $l\rightarrow \infty$.
Take sufficient large $l=\mathbf{l_0}$ independent of $Q_1$,
such that
\begin{equation*}
  \Xi(x)\leqslant \min\{\frac{A_1}{2A_4},1\}, \ \forall x\in Q_{(\mathbf{l_0})}.
\end{equation*}
Then the combination of (\ref{for proof, pointwise, 2}) and (\ref{for proof, pointwise, 3}) yields that
\begin{equation*}
 |[b,T](\phi)(x)|+|[b,T](\psi)(x)|\geqslant|[b,T](\phi)(x)|+\Xi|[b,T](\psi)(x)|
 \geqslant \frac{A_1B|Q_1|^{\alpha/n}}{2|h_{\mathbf{l_0}}|^{n-\alpha}},\quad\forall\, x\in Q_{(\mathbf{l_0})}=Q_1+\rho h_{\mathbf{l_0}}.
\end{equation*}
Take $Q=Q_{(\mathbf{l_0})}$, and recall the side length of $Q_1$ is $\rho$.
Thus, there exists $\lambda$ depend only on $h_{\mathbf{l_0}}$,
such that $Q_1\subset \lambda Q$. We have now completed this proof.
\end{proof}

\begin{proposition}[Technique, homogeneous kernel, $Z=L^{\infty}$]\label{proposition, technique, homogeneous kernel, Z=L^infty}
Let $\Omega$ be a homogeneous function of degree zero.
  If $\Omega$ satisfies the following local uniform Lebesgue point property in $\mathbb{S}^{n-1}$:
  there exists an open subset of $\mathbb{S}^{n-1}$, denoted by $E$, such that
  \begin{equation*}
    \lim_{r\rightarrow 0}\frac{1}{\sigma(B(x',r)\cap \mathbb{S}^{n-1})}\int_{B(x',r)\cap \mathbb{S}^{n-1}}|\Omega(z')-\Omega(x')|d\sigma(z')=0
  \end{equation*}
  for all $x'\in E$ uniformly, then, for any $F\subset E$ with $\overline{F}\cap E^c=\emptyset$, we have
  \begin{equation*}
    \lim\limits_{\substack{|h|\rightarrow \infty, \\h'\in F}}
     \left\|\int_{Q_0}|\Omega(\cdot-y+h)-\Omega(\cdot+h)|dy\right\|_{L^{\infty}(Q_0)}\rightarrow 0.
  \end{equation*}
\end{proposition}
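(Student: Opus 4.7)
The plan is to reduce the claim to a statement about spherical averages of $\Omega$ over small caps, and then invoke the uniform Lebesgue point hypothesis directly. The key structural fact is that as $|h|\to\infty$, for any fixed $x\in Q_0$, the directions $(x+h)/|x+h|$ and $(x-y+h)/|x-y+h|$ with $y\in Q_0$ all collapse onto $h'$ at rate $O(1/|h|)$.

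First I would substitute $u=-y$ in the inner integral, using that $Q_0=[-1/2,1/2]^n$ is symmetric, and set $w:=x+h$, so that the task becomes showing
\begin{equation*}
I(h,x):=\int_{Q_0}|\Omega(w+u)-\Omega(w)|\,du \longrightarrow 0
\end{equation*}
uniformly in $x\in Q_0$ and $h'\in F$ as $|h|\to\infty$. A direct vector computation gives $|(w+u)'-w'|\leq 2|u|/|w|$, so in particular $|w'-h'|\lesssim 1/|h|$ uniformly in $x$. Since $\overline{F}$ is compact in $\mathbb{S}^{n-1}$ and disjoint from the closed set $E^c$, there is $\delta_0>0$ with the $\delta_0$-neighbourhood of $\overline{F}$ on the sphere contained in the open set $E$; hence $w'\in E$ for all $|h|$ large, uniformly in $x\in Q_0$ and $h'\in F$.

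Next I would pass to polar coordinates centred at $-w$, writing $u=-w+\rho\xi$ with $\rho=|w+u|$ and $\xi=(w+u)'$. By degree-zero homogeneity $\Omega(w+u)=\Omega(\xi)$ and $\Omega(w)=\Omega(w')$, while $du=\rho^{n-1}d\rho\,d\sigma(\xi)$. For $u\in Q_0$ one has $|u|\leq \sqrt{n}/2$, forcing $\rho\in[|w|-\sqrt{n}/2,|w|+\sqrt{n}/2]$ and $\xi\in B(w',r)\cap\mathbb{S}^{n-1}$ with $r:=\sqrt{n}/|w|$. For each such $\xi$ the set of $\rho$ with $-w+\rho\xi\in Q_0$ is an interval of length at most $\mathrm{diam}(Q_0)=\sqrt{n}$. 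Combining these yields
\begin{equation*}
I(h,x)\leq \sqrt{n}\bigl(|w|+\tfrac{\sqrt{n}}{2}\bigr)^{n-1}\int_{B(w',r)\cap\mathbb{S}^{n-1}}|\Omega(\xi)-\Omega(w')|\,d\sigma(\xi).
\end{equation*}

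Finally, since $\sigma(B(w',r)\cap\mathbb{S}^{n-1})\sim r^{n-1}\sim |w|^{-(n-1)}$, the explicit prefactor is absorbed and the right-hand side is bounded, up to a constant depending only on $n$, by the normalised Lebesgue-point average of $\Omega$ at $w'$ on scale $r$. As $|h|\to\infty$ we have $r\to 0$ uniformly in $x$, and $w'\in E$, so the uniform Lebesgue point hypothesis drives this average to $0$ uniformly in $x\in Q_0$ and $h'\in F$, which is the desired conclusion. The main obstacle I anticipate is coordinating the two layers of uniformity: the estimate must hold in the $L^\infty(Q_0)$-norm in $x$ and simultaneously uniformly in $h'\in F$. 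The hypothesis $\overline{F}\cap E^c=\emptyset$ is exactly what guarantees that, for $|h|$ large, $w'$ together with the entire small cap $B(w',r)\cap\mathbb{S}^{n-1}$ stays inside $E$, permitting the uniform Lebesgue property to be applied.
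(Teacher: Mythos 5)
Your proof is correct and follows essentially the same route as the paper's: change variables to reduce to an integral over a translate of $Q_0$, pass to polar coordinates to see that the integration is confined to a spherical cap of radius $\sim\sqrt{n}/|x+h|$ about $(x+h)'$, and then absorb the radial factor $|x+h|^{n-1}$ into the normalisation $\sigma(\text{cap})^{-1}$ so that the uniform Lebesgue point hypothesis applies. Your additional remarks on the compactness of $\overline{F}$ and the uniformity in $x\in Q_0$ only make explicit what the paper leaves implicit.
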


\begin{proof}
By the assumption, we have $(x+h)':=(x+h)/|x+h|\in E$ for $x\in Q_0$ and sufficient large $h'\in F$.
For any $x\in Q_0$, we write $x_h:=x+h$. Then
\begin{equation}
    \begin{split}
      \int_{Q_0}\left|\Omega(x_h-y)-\Omega(x_h)\right|dy
      =&
      \int_{x_h+Q_0}\left|\Omega(z)-\Omega(x_h)\right|dz
      \\
      \leqslant &
      \int_{|x_h|-\sqrt{n}}^{|x_h|+\sqrt{n}}\int_{\mathbb{S}^{n-1}\cap B(x_h',\sqrt{n}/|x_h|)}|\Omega(z')-\Omega(x_h')|d\sigma(z')r^{n-1}dr,
    \end{split}
  \end{equation}
  where we use the fact that for $x,y \in Q_0$,
  \begin{equation*}
    \begin{split}
      |z'-x_h'|
      =\left|\frac{x_h-y}{|x_h-y|}-\frac{x_h}{|x_h|}\right|
      \leqslant
      |x_h-y|\cdot\left|\frac{1}{|x_h-y|}-\frac{1}{|x_h|}\right|+\frac{|y|}{|x_h|}\leqslant \frac{2|y|}{|x_h|}\leqslant \frac{\sqrt{n}}{|x_h|}.
    \end{split}
  \end{equation*}
  Thus,
  \begin{equation*}
    \begin{split}
      \int_{Q_0}\left|\Omega(x_h-y)-\Omega(x_h)\right|dy
      \leqslant&
      \int_{|x_h|-\sqrt{n}}^{|x_h|+\sqrt{n}}\int_{\mathbb{S}^{n-1}\cap B(x_h',\sqrt{n}/|x_h|)}|\Omega(z')-\Omega(x_h')|d\sigma(z')r^{n-1}dr
      \\
      \sim &
      |x_h|^{n-1}\int_{\mathbb{S}^{n-1}\cap B(x_h',\sqrt{n}/|x_h|)}|\Omega(z')-\Omega(x_h')|d\sigma(z')
      \\
      \sim &
      \frac{1}{\sigma(\mathbb{S}^{n-1}\cap B(x_h',\sqrt{n}/|x_h|))}\int_{\mathbb{S}^{n-1}\cap B(x_h',\sqrt{n}/|x_h|)}|\Omega(z')-\Omega(x_h')|d\sigma(z').
    \end{split}
  \end{equation*}
  Recalling $x_h'\in E$, $|x_h|\rightarrow \infty$ as $|h|\rightarrow \infty$ for $x\in Q_0$, and recalling that $E$ is the set of uniform Lebesgue points, we conclude that
  \begin{equation*}
    \lim_{x\in Q_0, |h|\rightarrow \infty} \frac{1}{\sigma(\mathbb{S}^{n-1}\cap B(x_h',\sqrt{n}/|x_h|))}\int_{\mathbb{S}^{n-1}\cap B(x_h',\sqrt{n}/|x_h|)}|\Omega(z')-\Omega(x_h')|d\sigma(z')=0.
  \end{equation*}
 This completes the proof of Proposition 2.3.
\end{proof}

\begin{proposition}[Technique, homogeneous kernel, $Z=L^1$]\label{proposition, technique, homogeneous kernel, Z=L^1}
Let $\Omega$ be a function satisfying homogeneous condition of degree zero in $\mathbb{R}^n$.
  If $\Omega\in L^1(E)$ for some open subset $E$ of $\mathbb{S}^{n-1}$,
  then for any $F\subset E$ with $\overline{F}\cap E^c=\emptyset$, we have
  \begin{equation*}
    \lim\limits_{\substack{d\rightarrow \infty}}
      \inf\limits_{\substack{h'\in F\\|h|=d}}
     \left\|\int_{Q_0}|\Omega(\cdot-y+h)-\Omega(\cdot+h)|dy\right\|_{L^1(Q_0)}\rightarrow 0.
  \end{equation*}
\end{proposition}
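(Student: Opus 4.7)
The plan is to approximate $\Omega$ on a neighborhood of $F$ by a continuous function in $L^1$, split the integrand via the triangle inequality, and extract a favorable direction $h'\in F$ by averaging. Since $\overline{F}\cap E^c=\emptyset$ and $E$ is open, fix $\delta>0$ so that the spherical $\delta$-neighborhood $F_\delta:=\{\sigma\in\mathbb{S}^{n-1}:\mathrm{dist}(\sigma,F)<\delta\}$ is contained in $E$. Given $\epsilon>0$, by density of continuous functions in $L^1(F_\delta,d\sigma)$ (applicable since $\Omega\in L^1(E)\supset L^1(F_\delta)$), pick $g\in C(\mathbb{S}^{n-1})$ with $\|\Omega-g\|_{L^1(F_\delta)}<\epsilon$, and extend $g$ to $\mathbb{R}^n\setminus\{0\}$ by homogeneity of degree zero.

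Let $I(h)$ denote the quantity whose infimum is to be controlled. Using homogeneity ($\Omega(z)=\Omega(z')$, $g(z)=g(z')$), the triangle inequality gives $I(h)\le I_1(h)+I_2(h)+I_3(h)$, where $I_1(h):=\int_{Q_0}\int_{Q_0}|\Omega-g|((x-y+h)')\,dy\,dx$, $I_3(h)$ is the analogous term at $(x+h)'$, and $I_2(h):=\int_{Q_0}\int_{Q_0}|g((x-y+h)')-g((x+h)')|\,dy\,dx$. For $h'\in F$ and $|h|$ large, both $(x-y+h)'$ and $(x+h)'$ lie in $F_{\delta/2}$ and are at spherical distance $O(1/|h|)$; since $g$ is uniformly continuous on $\mathbb{S}^{n-1}$, $I_2(h)\to 0$ as $|h|\to\infty$, uniformly for $h'\in F$.

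The key step is to bound $I_1$ (and analogously $I_3$) on average over $h'\in F$. By Fubini,
\[\int_F I_1(dh')\,d\sigma(h')=\int_{Q_0}\int_{Q_0}\int_F |\Omega-g|\bigl((x-y+dh')'\bigr)\,d\sigma(h')\,dy\,dx.\]
For each fixed $a:=x-y$ with $|a|\le\sqrt{n}$, consider the map $\phi_{a,d}:h'\mapsto(a+dh')'$ on $\mathbb{S}^{n-1}$. A direct computation yields $D\phi_{a,d}(h')(v)=\frac{d}{|a+dh'|}P_{(a+dh')'}(v)$ for $v\in T_{h'}\mathbb{S}^{n-1}$, where $P_\sigma$ is the orthogonal projection onto $T_\sigma\mathbb{S}^{n-1}$. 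As $d\to\infty$, $(a+dh')'$ converges to $h'$ uniformly on $F\times\{|a|\le\sqrt n\}$, so $\phi_{a,d}$ tends to the identity in the $C^1$ topology; in particular, for $d$ large enough $\phi_{a,d}$ is a diffeomorphism with $\phi_{a,d}(F)\subset F_\delta$ and Jacobian satisfying $1/2\le|J_{\phi_{a,d}}|\le 2$. Changing variables on the sphere,
\[\int_F|\Omega-g|\bigl((a+dh')'\bigr)\,d\sigma(h')\le 2\int_{\phi_{a,d}(F)}|\Omega-g|\,d\sigma\le 2\|\Omega-g\|_{L^1(F_\delta)}<2\epsilon,\]
so $\int_F I_1(dh')\,d\sigma(h')\le 2\epsilon|Q_0|^2$ for large $d$, and likewise for $I_3$.

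Combining the three estimates, provided $\sigma(F)>0$ (which holds in the intended applications where $F$ is open),
\[\inf_{\substack{h'\in F\\|h|=d}}I(h)\le\frac{1}{\sigma(F)}\int_F I(dh')\,d\sigma(h')\le\frac{4\epsilon|Q_0|^2}{\sigma(F)}+o_d(1).\]
Letting $d\to\infty$ first and then $\epsilon\to 0$ yields the claim. The main technical obstacle is establishing the $C^1$-convergence $\phi_{a,d}\to\mathrm{id}$ together with the inclusion $\phi_{a,d}(F)\subset F_\delta$ uniformly in $|a|\le\sqrt n$; both follow routinely from the explicit differential formula and the bound $|\phi_{a,d}(h')-h'|=O(1/d)$.
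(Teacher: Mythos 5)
Your proof is correct, and its engine is the same as the paper's: the infimum of $I(h)$ over $h'\in F$, $|h|=d$ is dominated by an average over $F$, and that average is forced to be small by an $L^1$-continuity fact on the sphere. The implementations differ in two respects. First, the paper never splits $\Omega$: it keeps the difference $|\Omega(\cdot-y+h)-\Omega(\cdot+h)|$ intact, discretizes $F$ into $\sim d^{n-1}$ directions $h_{d,j}'$ whose spherical caps of radius $\sqrt n/d$ have bounded overlap inside $F$, converts each summand to polar coordinates over the corresponding cap, and reduces the resulting bound $\max_{|z|\le\sqrt n/d}\int_F|\Omega(x'-z)-\Omega(x')|\,d\sigma(x')$ to the continuity of rotations acting on $\Omega\chi_E\in L^1(\mathbb S^{n-1})$. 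You instead peel off a continuous approximant $g$ first (your $I_2$ is killed by uniform continuity of $g$ together with $|(x-y+h)'-(x+h)'|=O(1/|h|)$) and then average only the error terms $I_1,I_3$ continuously over $F$, controlling the spherical change of variables $h'\mapsto(a+dh')'$ by an explicit Jacobian bound. These are two standard ways of packaging the same fact — continuity of small perturbations in $L^1(\mathbb S^{n-1})$ is itself usually proved by approximation by continuous functions — and your Jacobian computation is, if anything, more careful than the paper's final "$\lesssim\max_{\|\rho\|\le\sqrt n/d}$" step over rotations. Your caveat that $\sigma(F)>0$ is needed is fair but harmless: the paper's construction of the net $\{h_{d,j}'\}$ likewise implicitly requires $F$ to have interior, and in the only application (Proposition 2.5) $F$ is open, so both arguments apply as intended.
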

\begin{proof}
For a sufficient large $d>0$, there exists an index set $\bigwedge_d$ and
a sequence $\{h_{d,j}\}_{j\in \bigwedge_d}\subset \mathbb{R}^n$, such that
$h_{d,j}'\in F$, $|h_{d,j}|=d$ for all $j\in \bigwedge_d$, $|\bigwedge_d|\sim d^{n-1}$,
$\sum_{j\in \bigwedge_d}\chi_{B(h'_{d,j},\sqrt{n}/d)\cap \mathbb{S}^{n-1}}\lesssim \chi_{F}(x)$.
Thus,
  \begin{equation*}
    \begin{split}
      &\sum_{j\in \bigwedge_d}\left\|\int_{Q_0}|\Omega(\cdot-y+h_{d,j})-\Omega(\cdot+h_{d,j})|dy\right\|_{L^1(Q_0)}
      \\
      &\qquad\qquad =
      \sum_{j\in \bigwedge_d}\int_{Q_0}\int_{Q_0}|\Omega(x-y+h_{d,j})-\Omega(x+h_{d,j})|dxdy
      \\
      &\qquad\qquad=
      \sum_{j\in \bigwedge_d}\int_{Q_0}\int_{Q_0+h_{d,j}}|\Omega(x-y)-\Omega(x)|dxdy
      \\
      &\qquad\qquad\leqslant
      \sum_{j\in \bigwedge_d}\int_{Q_0}\int_{d-\sqrt{n}}^{d+\sqrt{n}}\int_{\mathbb{S}^{n-1}\cap B(h_{d,j}',\sqrt{n}/d)}|\Omega(x'-y/r)-\Omega(x')|d\sigma(x')r^{n-1}drdy
      \\
      &\qquad\qquad=
      \int_{Q_0}\int_{d-\sqrt{n}}^{d+\sqrt{n}}\sum_{j\in \bigwedge_d}\int_{\mathbb{S}^{n-1}\cap B(h_{d,j}',\sqrt{n}/d)}|\Omega(x'-y/r)-\Omega(x')|d\sigma(x')r^{n-1}drdy,
    \end{split}
  \end{equation*}
  where we use the fact that for $x\in Q_0+h_{d,j}$,
  \begin{equation*}
    \begin{split}
      |x'-h_{d,j}'|=&\left|\frac{x}{|x|}-\frac{h_{d,j}}{|h_{d,j}|}\right|
      \leqslant
      \left|\frac{x-h_{d,j}}{|h_{d,j}|}\right|+|x|\cdot\left|\frac{1}{|x|}-\frac{1}{|h_{d,j}|}\right|
      \leqslant
      \frac{2|x-h_{d,j}|}{|h_{d,j}|}
      \leqslant \frac{\sqrt{n}}{|h_{d,j}|}=\frac{\sqrt{n}}{d}.
    \end{split}
  \end{equation*}
  Recalling the choice of $h_{d,j}$, we have
  \begin{equation*}
    \begin{split}
      \sum_{j\in \bigwedge_d}\int_{\mathbb{S}^{n-1}\cap B(h_{d,j}',\sqrt{n}/d)}|\Omega(x'-y/r)-\Omega(x')|d\sigma(x')
      \lesssim
      \int_{F}|\Omega(x'-y/r)-\Omega(x')|d\sigma(x').
    \end{split}
  \end{equation*}
Thus,
\begin{equation*}
    \begin{split}
     &|\bigwedge_d|\cdot \inf\limits_{\substack{h'\in F\\|h|=d}}\left\|\int_{Q_0}|\Omega(\cdot-y+h)-\Omega(\cdot+h)|dy\right\|_{L^1(Q_0)}
      \\
      &\qquad\qquad\leq
      \sum_{j\in \bigwedge_d}\left\|\int_{Q_0}|\Omega(\cdot-y+h_{d,j})-\Omega(\cdot+h_{d,j})|dy\right\|_{L^1(Q_0)}
      \\
      &\qquad\qquad\lesssim
      \int_{Q_0}\int_{d-\sqrt{n}}^{d+\sqrt{n}}\int_{F}|\Omega(x'-y/r)-\Omega(x')|d\sigma(x')r^{n-1}drdy
      \\
      &\qquad\qquad\lesssim
      d^{n-1}\max_{|z|\leqslant \sqrt{n}/d} \int_{F}|\Omega(x'-z)-\Omega(x')|d\sigma(x').
    \end{split}
  \end{equation*}
Recalling $|\bigwedge_d|\sim d^{n-1}$, we obtain
\begin{equation*}
    \begin{split}
\inf\limits_{\substack{h'\in F\\|h|=d}}\left\|\int_{Q_0}|\Omega(\cdot-y+h)-\Omega(\cdot+h)|dy\right\|_{L^1(Q_0)}
      &\lesssim  \max_{|z|\leqslant \sqrt{n}/d} \int_{F}|\Omega(x'-z)-\Omega(x')|d\sigma(x')\\
      &\lesssim  \max_{\|\rho\|\leqslant \sqrt{n}/d} \int_{F}|\Omega(\rho x')-\Omega(x')|d\sigma(x')\\
      &\lesssim  \max_{\|\rho\|\leqslant \sqrt{n}/d} \int_{\mathbb{S}^{n-1}}|\Omega\chi_E(\rho x')-\Omega\chi_E(x')|d\sigma(x')\rightarrow 0
    \end{split}
  \end{equation*}
as $d\rightarrow \infty$.
Here $\rho$ is a rotation on $\mathbb{R}^n$, $\|\rho\|=\sup\{|\rho x'-x'|: x'\in \mathbb{S}^{n-1}\}$.
Proposition 2.4 is proved.
\end{proof}

The combination of Theorem \ref{theorem, structure, homogeneous kernel} and Proposition \ref{proposition, technique, homogeneous kernel, Z=L^1}
yields the following useful conclusion.

\begin{proposition}\label{proposition, structure and technique for Z=L^1}
  Let $Z=L^1(\mathbb{R}^n)$,
  $X$, $Y$, $\widetilde{Y}$ be Quasi-Banach spaces, satisfying all
  the assumptions as described in Theorem \ref{theorem, structure, homogeneous kernel}.
  Let $T_{\alpha}$ be the integral operator associated with $\Omega$ and $\alpha$,
  where $\Omega$ is a homogeneous function of degree zero. Suppose $b\in L_{loc}^1(\mathbb{R}^n)$,
  $[b,T_{\alpha}]$ is bounded from $X$ to $Y$. If $\Omega$ satisfies the following local lower and upper bound property: there
  exists a nonempty open subset $E\subset \mathbb{S}^{n-1}$ such that
  \begin{equation*}
    c\leqslant \Omega(x')\leqslant C\ \text{for all}\ x'\in E,\text{where}\  0<c<C,\ \text{or}\  c<C<0,
  \end{equation*}
  then $b\in BMO_\mu$, and $\|b\|_{BMO_\mu}\lesssim \|[b, T_{\alpha}]\|_{X\rightarrow Y}.$
\end{proposition}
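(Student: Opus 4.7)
The plan is to deduce Proposition 2.5 by specializing Theorem \ref{theorem, structure, homogeneous kernel} to $Z=L^1(\mathbb{R}^n)$ and supplying the decay hypothesis on $\Omega$ that Theorem \ref{theorem, structure, homogeneous kernel} requires via Proposition \ref{proposition, technique, homogeneous kernel, Z=L^1}. The argument therefore reduces to (a) checking that $L^1$ satisfies the two abstract requirements Theorem \ref{theorem, structure, homogeneous kernel} imposes on the auxiliary quasi-Banach space $Z$, and (b) showing that the sole qualitative assumption on $\Omega$, namely its local two-sided bound on $E$, already suffices to feed Proposition \ref{proposition, technique, homogeneous kernel, Z=L^1}.

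For step (a), I would verify the structural conditions (I) and (II). Condition (I), $Y\cdot Z\subset\widetilde{Y}$ with $\|\chi_Q\|_{\widetilde{Y}}\sim\|\chi_Q\|_Y\|\chi_Q\|_Z$, is built into the standing hypothesis on the triple $(Y,\widetilde{Y},Z)$ inherited from Theorem \ref{theorem, structure, homogeneous kernel}. Condition (II), the scaling-translation invariance
\[
\frac{\|f(a\cdot+b)\|_Z}{\|\chi_{Q_0}(a\cdot+b)\|_Z}\sim\frac{\|f\|_Z}{\|\chi_{Q_0}\|_Z},
\]
is immediate for $Z=L^1$ by a change of variables: both numerator and denominator on the left pick up the same factor $a^{-n}$, so the ratio equals $\|f\|_{L^1}/|Q_0|$ regardless of $a$ and $b$.

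For step (b), I would verify the two local conditions on $\Omega$ demanded by Theorem \ref{theorem, structure, homogeneous kernel}. Condition (1), the two-sided bound on $E$, is exactly the hypothesis. Condition (2) asks for a sequence $\{h_l\}$ with $h_l'\in F$, $|h_l|\to\infty$, along which $\bigl\|\int_{Q_0}|\Omega(\cdot-y+h_l)-\Omega(\cdot+h_l)|\,dy\bigr\|_{L^1(Q_0)}\to 0$. To apply Proposition \ref{proposition, technique, homogeneous kernel, Z=L^1} I only need $\Omega\in L^1(E)$, which is trivial from $|\Omega|\leqslant C$ on $E$ together with $\sigma(E)<\infty$. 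Given an arbitrary open $F\subset E$, I would first pass to a nonempty open $F'\subset F$ with $\overline{F'}\cap E^c=\emptyset$ (possible since $F$ is open in $\mathbb{S}^{n-1}$), then invoke Proposition \ref{proposition, technique, homogeneous kernel, Z=L^1} on $F'$ and extract from the vanishing infimum a sequence $\{h_l\}$ with $h_l'\in F'\subset F$, $|h_l|\to\infty$, and the desired convergence.

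With all four hypotheses of Theorem \ref{theorem, structure, homogeneous kernel} verified, the conclusion $b\in BMO_\mu$ together with $\|b\|_{BMO_\mu}\lesssim\|[b,T_\alpha]\|_{X\to Y}$ follows at once. The only delicate (but minor) technical point is the quantifier mismatch between Theorem \ref{theorem, structure, homogeneous kernel}, which needs a sequence for every open $F\subset E$, and Proposition \ref{proposition, technique, homogeneous kernel, Z=L^1}, which produces one only when $F$ is relatively compact in $E$; the shrinking step above closes this gap.
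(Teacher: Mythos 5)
Your proposal is correct and follows essentially the same route as the paper: derive $\Omega\in L^1(E)$ from the two-sided bound, feed Proposition \ref{proposition, technique, homogeneous kernel, Z=L^1} into Theorem \ref{theorem, structure, homogeneous kernel}, and extract a sequence from the vanishing infimum. Your explicit verification of the scaling condition (II) for $Z=L^1$ and your handling of the quantifier mismatch (shrinking an arbitrary open $F$ to a relatively compact $F'$) are details the paper glosses over, but they are exactly the right fixes.
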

\begin{proof}
  By the assumption, $\Omega\in L^1(E)$.
  For any open set $F\subset E$ satisfying $\overline{F}\cap E^c=\emptyset$, we use
  Proposition \ref{proposition, technique, homogeneous kernel, Z=L^1} to deduce that
  \begin{equation*}
    \lim\limits_{\substack{|h|\rightarrow \infty}}
      \inf_{h'\in F}
     \left\|\int_{Q_0}|\Omega(\cdot-y+h)-\Omega(\cdot+h)|dy\right\|_{L^1(Q_0)}\rightarrow 0.
  \end{equation*}
  Then the desired conclusion immediately follows from Theorem \ref{theorem, structure, homogeneous kernel}.
\end{proof}

\section{The general structure and technique theorems in multi-linear setting}

This section is devoted to the investigation of the multilinear commutators. At first, we recall some notations and definitions. Let $m\in\mathbb{N}$, $K_{\alpha}$ be a function defined off the diagonal $x=y_1=\cdots=y_m$ in $(\mathbb{R}^n)^{m+1}=:\mathbb{R}^{(m+1)n}$ satisfying
\begin{equation*}
  |K_{\alpha}(x,y_1,\cdots,y_m)|\lesssim \frac{1}{(\sum_{j=1}^m|x-y_j|)^{mn-\alpha}}
\end{equation*}
and
\begin{equation*}
\begin{split}
  |K_{\alpha}(x;y_1,\cdots,y_i,\cdots,y_m)-K_{\alpha}(x;y_1,\cdots,y_i',\cdots,y_m)|
\lesssim \frac{|y_i-y_i'|^{\delta}}{(\sum_{j=1}^m|x-y_j|)^{mn-\alpha+\delta}}
\end{split}
\end{equation*}
for some $\delta>0$, and all $i=1,\cdots,m$, whenever $|y_i-y_i'|\leq \frac{1}{2}\max_{1\leq j\leq m}\{|x-y_j|\}$.

For $\vec{f}=(f_1,\cdots,f_m)$, we consider the following $m$-linear operator $T_{K_\alpha}$ associated with kernel $K_{\alpha}$, defined by
\begin{equation}
  T_{K_{\alpha}}(\vec{f})(x):=\int_{\mathbb{R}^{mn}}K_{\alpha}(x;y_1,\cdots,y_m)f_1(y_1)\cdots f_m(y_m)d\vec{y}
\end{equation}
where above equality holds for all $x\notin \bigcap_{j=1}^m\text{supp}f_j$, $d\vec{y}=dy_1\cdots\,dy_m$.

\medskip

For $b\in L_{\rm loc}^1(\mathbb{R}^n)$, $1\le i\le m$, the $i$-th commutator associated $b$ and $T_{K_\alpha}$ is defined by
\begin{equation}
[b,T_{K_\alpha}]_i(\vec{f}):=bT_{K_\alpha}(f_1,\cdots,f_i,\cdots, f_m)-T_{K_\alpha}(f_1,\cdots, bf_i,\cdots,f_m).
\end{equation}

 When $\alpha=0$, $T_{K_0}$ is the $m$-linear Calder\'{o}n-Zygmund operator.

 When $\alpha\in (0,n)$, $T_{K_{\alpha}}$ is controlled by the following $m$-linear fractional integral operator:
\begin{equation*}
  I_{\alpha,m}(\vec{f})(x):=\int_{\mathbb{R}^{mn}}\frac{f_1(y_1)\cdots f_m(y_m)}{|(x-y_1,\cdots,x-y_m)|^{mn-\alpha}}d\vec{y};
\end{equation*}
and the corresponding $i$-th commutator of $I_{\alpha,m}$ is defined by
\begin{equation}
\begin{array}{ll}
[b, I_{\alpha,m}]_i(\vec{f})(x):&=b(x)I_{\alpha,m}(f_1,\cdots,f_i,\cdots,f_m)(x)-I_{\alpha,m}(f_1,\cdots,bf_i,\cdots,f_m)(x)\\
&=\displaystyle\int_{\mathbb{R}^{mn}}\frac{f_1(y_1)\cdots\,[b(x)-b(y_i)]f_i(y_i)\cdots\,f_m(y_m)}{|(x-y_1,\cdots,x-y_i,\cdots,x-y_m)|^{mn-\alpha}}d\vec{y}.
\end{array}
\end{equation}

In \cite{Chaf-Cruz-2017}, Chaffee and Cruz-Uribe recently established the following result.

\medskip

{\bf Theorem B} (\cite{Chaf-Cruz-2017}).\quad{\it Let $m,\,n\in \mathbb{N}$. Given Banach function spaces $X_1,\cdots,X_m$ and $Y$, $0\le \alpha<mn$, suppose that for every cube $Q$,
$$|Q|^{-\alpha/n}\|\chi_Q\|_{Y'}\|\chi_Q\prod_{j=1}^m\|_{X_j}\lesssim |Q|.$$
Let $T$ be a $m$-linear operator defined on $X_1\times\cdots\times X_m$, which can be represented by
$$T(\vec{f})(x)=\int _{\mathbb{R}^{mn}}K(x-y_1,\cdots,x-y_m)f_1(y_1)\cdots f_m(y_m)d\vec{y}$$
for all $x\notin \bigcap_{j=1}^m{\rm supp}(f_j)$, where $\vec{f}=(f_1,\cdots,f_m)$, $d\vec{y}=dy_1\cdots dy_m$, $K$ is a homogeneous kernel of degree $-mn+\alpha$. Suppose further that there exists a ball $B\subset \mathbb{R}^{mn}$ on which $1/K$ has an absolutely convergent Fourier series. If for some $i\in \{1,\cdots,m\}$, the $i$-th commutator $[b,T]_i$ is bounded from $X_1\times\cdots\times X_m$ to $Y$, then $b\in BMO(\mathbb{R}^n)$.}

\medskip

{\bf Remark 3.1.}\quad We remark that the methods employed in \cite{Chaf-Cruz-2017} do not work if replacing Banach spaces by Quasi-Banach spaces in Theorem B.

\medskip

Our next theorem will relax the restriction of Banach function spaces to quasi-Banach spaces and extend $BMO(\mathbb{R}^n)$ to the general $BMO_\mu$, which includes $BMO(\mathbb{R}^n)$, ${\rm Lip}_\beta (\mathbb{R}^n)$ and their weighted versions. Moreover, the condition, which the kernel $K$ satisfies, will be weakened essentially.

\begin{theorem}[Structure, standard kernel, multilinear case]\label{Theorem, structure, standard kernel}
Let $m\in \mathbb{N}$, $0\le\alpha<mn$, and $i$ be a given integer with $1\le i\le m$. Let $X_j\  (j=1,\cdots,m), Y$ be Quasi-Banach spaces, and $\mu$ be a positive function defined on all cubes of $\mathbb{R}^n$,
satisfying
\begin{equation}
  \prod_{j=1}^m\|\chi_{Q}\|_{X_j}\lesssim \|\chi_{Q}\|_Y|Q|^{\alpha/n-1}\mu(Q) \text{ for all cubes}\ Q.
\end{equation}
and one of the following conditions:
\begin{enumerate}[(a)]
  \item $\|\chi_{\lambda Q}\|_{X_j}\leq C_{\lambda}\|\chi_Q\|_{X_j} (j=1,\cdots,m)$,
  $\|\chi_{\lambda Q}\|_{Y}\leq C_{\lambda}\|\chi_Q\|_{Y}$,\
    for  $\lambda>1$\  and all cubes $Q\subset\mathbb{R}^n$;
  \item $\|\chi_{\lambda Q}\|_{X_j}\leq C_{\lambda}\|\chi_Q\|_{X_j} (j=1,\cdots,m)$, $\mu(\lambda Q)\leq C_{\lambda}\mu(Q)$,\
    for  $\lambda>1$\  and all cubes $Q\subset\mathbb{R}^n$.
\end{enumerate}

Suppose that the following conditions holds:
\begin{equation*}
  \|\chi_{\lambda Q}\|_{X_j}\leq C_{\lambda}\|\chi_Q\|_{X_j} (j=1,\cdots,m), \mu(\lambda Q)\leq C_{\lambda}\mu(Q),\
    \text{for}\  \lambda>1\  \text{and all cubes}\   Q\subset\mathbb{R}^n.
\end{equation*}
  If $[b,T_{K_{\alpha}}]_i$ is a bounded operator from $X_1\times\cdots \times X_m$ to $Y$, and there exist two Quasi-Banach spaces $\widetilde{Y}$ and $Z$ satisfying
   $$Y\cdot Z\subset\widetilde{Y},\,\,\|\chi_{Q}\|_{\widetilde{Y}}\sim \|\chi_{Q}\|_{Y}\cdot \|\chi_{Q}\|_{Z}\quad\text{for all cubes}\,\, Q\subset\mathbb{R}^n,$$
and $$\frac{\|f(a\cdot+b)\|_Z}{\|\chi_{Q_0}(a\cdot+b)\|_Z} \sim \frac{\|f(\cdot)\|_Z}{\|\chi_{Q_0}(\cdot)\|_Z}\quad\text{for all}\,f\in Z,\, \text{where}\,\, Q_0=[-1/2,1/2]^n$$
  such that $K_{\alpha}$ satisfies the following local properties: there exists an open cone $\widetilde{\Gamma}$ of $(\mathbb{R}^{n})^m$
  whose vertex is $0$,
  such that :
  \begin{enumerate}[(1)]
  \item lower and upper bound: $$\frac{c}{(\sum_{j=1}^m|x-y_j|)^{mn-\alpha}}\leqslant K_{\alpha}(x,y_1,\cdots,y_m) \leqslant \frac{C}{(\sum_{j=1}^m|x-y_j|)^{mn-\alpha}}$$
  for all $(x-y_1,\cdots,x-y_m)\in \widetilde{\Gamma}$ with $0<c<C$, or $c<C<0$;
  \item
  for any open cone $\overline{\Gamma}\subset \widetilde{\Gamma}$,
  there exists a sequence $\{h_l=(h_l^1,\cdots,h_l^m)\}_{j=1}^{\infty}$
  satisfying that $h_l\in \overline{\Gamma}$, $|h_l|\rightarrow \infty$ as $l\rightarrow \infty$, and
  \begin{equation*}
    \begin{split}
      &\left\|\int_{\prod\limits_{{j=1}}^m(Q-\sqrt[n]{Q}h_l^j)}|K_{\alpha}(\cdot,\cdots,y_m)
      -K_{\alpha}(\cdot,y_1,\cdots,y_{i-1},c_{Q}-\sqrt[n]{|Q|}h_l^i,\cdots,y_m)|dy_1\cdots\,dy_m\right\|_{Z(Q)}\\
      &\qquad\times \frac{|h_l|^{mn-\alpha}}{\|\chi_{Q}\|_Z|Q|^{\alpha/n}}\rightarrow 0,\,\text{as}\, l \rightarrow \infty\, \text{uniformly for all cubes}\, Q, \,\text{where}\, c_Q\, \text{denotes the center of}\  Q,
    \end{split}
  \end{equation*}

  \end{enumerate}
  then $b\in BMO_\mu$, and $\|b\|_{BMO_\mu}\lesssim \|[b,T_{K_{\alpha}}]_i\|_{X_1\times \cdots\times X_m\rightarrow Y}$.
\end{theorem}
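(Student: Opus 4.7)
The plan is to mirror the proof of Theorem \ref{theorem, structure, homogeneous kernel} in the $m$-linear setting. Fix a cube $Q_1$ of side length $\rho$, normalized so that $\int_{Q_1}b=0$, and set $B:=\mu(Q_1)^{-1}\int_{Q_1}|b|$. Choose the sequence $\{h_l=(h_l^1,\dots,h_l^m)\}$ from hypothesis (2), inside a subcone $\overline{\Gamma}\subset\widetilde{\Gamma}$, and put $Q_{(l)}:=Q_1+\rho h_l^i$ and $Q_1^j:=Q_{(l)}-\rho h_l^j$ (so $Q_1^i=Q_1$). For $x\in Q_{(l)}$ and $y_j\in Q_1^j$ the tuple $(x-y_1,\dots,x-y_m)$ lies in $\widetilde{\Gamma}$, and hypothesis (1) forces $K_{\alpha}(x,\vec y)\sim(\rho|h_l|)^{-(mn-\alpha)}$ on that set. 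The test tuples will be $\phi_i:=(\mathrm{sgn}\,b-|Q_1|^{-1}\int_{Q_1}\mathrm{sgn}\,b)\chi_{Q_1}$ with $\phi_j:=\chi_{Q_1^j}$ for $j\ne i$ (so that $b\phi_i\ge 0$ and $\int\phi_i=0$), and $\psi_j:=\chi_{Q_1^j}$ for all $j=1,\dots,m$.

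Direct size estimates on $K_\alpha$ yield, pointwise on $Q_{(l)}$,
\[
|T_{K_\alpha}(\phi_1,\dots,b\phi_i,\dots,\phi_m)(x)|\gtrsim\frac{\rho^{\alpha-n}\mu(Q_1)B}{|h_l|^{mn-\alpha}},\qquad |b(x)T_{K_\alpha}(\vec\psi)(x)|\gtrsim\frac{|b(x)|\rho^{\alpha}}{|h_l|^{mn-\alpha}},
\]
together with the error-term upper bound $|T_{K_\alpha}(\psi_1,\dots,b\psi_i,\dots,\psi_m)(x)|\lesssim\rho^{\alpha-n}\mu(Q_1)B\,|h_l|^{-(mn-\alpha)}$. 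Restricting to $Q_M:=Q_{(l)}\cap\{|b|\le M\}$, applying the quasi-triangle inequality in $Y$ and $\widetilde Y$, and invoking the factorization $\|bg\chi_{Q_M}\|_{\widetilde Y}\lesssim\|b\chi_{Q_M}\|_Y\|g\chi_{Q_M}\|_Z$ supplied by hypothesis (I), I obtain two inequalities---one for $\|[b,T_{K_\alpha}]_i\vec\phi\|_Y$ and one for $\|[b,T_{K_\alpha}]_i\vec\psi\|_Y$---analogous to those derived in the linear proof, each containing a troublesome $\|b\chi_{Q_M}\|_Y$ cross-term.

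The main obstacle is the balancing/cancellation step. I would form the combination $\|[b,T_{K_\alpha}]_i\vec\phi\|_Y+\Xi_\infty\|[b,T_{K_\alpha}]_i\vec\psi\|_Y$ with
\[
\Xi_\infty:=\frac{A_2|h_l|^{mn-\alpha}\|T_{K_\alpha}(\vec\phi)\chi_{Q_{(l)}}\|_Z}{A_3\rho^{\alpha}\|\chi_{Q_{(l)}}\|_Z},
\]
designed so that the $\|b\chi_{Q_M}\|_Y$ terms drop out after $M\to\infty$; one then needs $\Xi_\infty\to 0$ as $l\to\infty$, uniformly in $Q_1$. For this, the mean-zero property $\int\phi_i=0$ lets me subtract $K_\alpha(x,y_1,\dots,c_{Q_1},\dots,y_m)$ from the kernel in the $i$-th slot (with $c_{Q_1}=c_{Q_{(l)}}-\rho h_l^i$), and the bound $|\phi_j|\le 2\chi_{Q_1^j}$ then dominates $|T_{K_\alpha}(\vec\phi)(x)|$ by $2^m$ times the integral of the kernel difference over $\prod_j Q_1^j=\prod_j(Q_{(l)}-\rho h_l^j)$. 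Thus $\Xi_\infty$ is, up to constants, exactly the normalized quantity appearing in hypothesis (2), and its uniform-in-$Q$ decay lets me fix $l=l_0$ (independent of $Q_1$) with $\Xi_\infty\le\min\{A_1/(2A_4),1\}$.

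The residual bookkeeping parallels the linear case. The bounds $|\phi_j|,|\psi_j|\le 2\chi_{Q_1^j}$ and the $m$-linear boundedness of $[b,T_{K_\alpha}]_i$ produce a factor $\prod_j\|\chi_{Q_1^j}\|_{X_j}$ on the left; under hypothesis (a), $X_j$-doubling pulls this back to $\prod_j\|\chi_{Q_1}\|_{X_j}$, and combined with the structural assumption $\prod_j\|\chi_{Q_1}\|_{X_j}\lesssim\|\chi_{Q_1}\|_Y\mu(Q_1)|Q_1|^{\alpha/n-1}$ and $Y$-doubling $\|\chi_{Q_1}\|_Y\lesssim_{|h_{l_0}|}\|\chi_{Q_{(l_0)}}\|_Y$, this cancels the $\rho^{\alpha-n}\mu(Q_1)\|\chi_{Q_{(l_0)}}\|_Y$ factor on the right, leaving $B\lesssim\|[b,T_{K_\alpha}]_i\|_{X_1\times\cdots\times X_m\to Y}$. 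Hypothesis (b) is handled symmetrically, with $X_j$-doubling routing through $\prod_j\|\chi_{Q_{(l_0)}}\|_{X_j}$ and $\mu$-doubling replacing $Y$-doubling. Taking the supremum over $Q_1$ yields the desired bound $\|b\|_{BMO_\mu}\lesssim\|[b,T_{K_\alpha}]_i\|$.
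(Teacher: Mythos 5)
Your proposal is correct and follows essentially the same route as the paper's proof: the same test tuples ($\phi_i$ built from $\mathrm{sgn}\,b$ on $Q_1$, characteristic functions on the translated cubes $Q_{(l)}-\rho h_l^j$ in the other slots, and $\vec\psi$ all characteristic functions), the same $\Xi$-weighted combination to cancel the $\|b\chi_{Q_M}\|_Y$ cross-terms, the same use of $\int\phi_i=0$ to reduce $\Xi_\infty$ to the normalized kernel-difference quantity of hypothesis (2), and the same doubling bookkeeping at the end. The only difference is cosmetic: you carry out the $Q_M$ truncation and general $(m,i)$ explicitly, whereas the paper writes the case $m=2$, $i=1$ and states that the limiting argument is omitted.
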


\begin{proof} Without loss of generality, we only deal with the case $0<c<C$.
As in the proof of Theorem \ref{theorem, structure, homogeneous kernel},
a limiting argument is needed since we don't know the local integrability of $b$ in $Y$.
In order to avoid cumbersome,  we omit the limiting argument and assume that $b$ is local integrable in $Y$.

For simplicity in notation and proof, we only present the proof for $m=2$, $i=1$, since the rest cases can be verified similarly.
Here we use $Q_0=[-1/2,1/2]^n$ to denote the unit cube in $\mathbb{R}^{n}$,
and use $Q_0^2=[-1/2,1/2]^{2n}$ to denote the unit cube in $\mathbb{R}^{2n}$.
Since the exact value of $\alpha$ does not affect the proof, for simplicity we abbreviate $T_{K_{\alpha}}$ to $T$, and abbreviate $K_{\alpha}$ to $K$.
Choose a constant $\tau_0$ and a nonempty open cone
$\Gamma\subset \widetilde{\Gamma}\subset \mathbb{R}^{2n}$ with vertex at the origin,
such that for any $u\in 2Q_0^2$, $v\in \Gamma_{\tau_0}:=\Gamma\cap B^c(0,\tau_0)$, we have $u+v\in \widetilde{\Gamma}$.

Furthermore, we can find a sequence $\{h_l=(h_l^1,h_l^2)\}_{l=1}^{\infty}$ satisfying that $h_l\in \Gamma_{\tau_0}$, and $|h_l|\rightarrow \infty$
as $l\rightarrow \infty$, such that
\begin{equation*}
  \begin{split}
  \frac{|h_l|^{2n-\alpha}}{|Q|^{\alpha/n}\|\chi_{Q}\|_Z}\left\|\int_{\prod_{j=1}^2(Q-\sqrt[n]{|Q|}h_l^j)}
  |K(\cdot,y,z)-K(\cdot,c_{Q}-\sqrt[n]{|Q|}h_l^1,z)|dydz\right\|_{Z(Q)}\rightarrow 0
  \end{split}
\end{equation*}
as $l\rightarrow \infty$, uniformly for all cubes $Q$.

For a fixed cube $Q_1$ with side length $\rho$, without loss of generality, we  may assume that
\begin{equation*}
  \int_{Q_1}b(y)dy=0,
\end{equation*} and set
\begin{equation*}
  B:=\frac{1}{\mu(Q_1)}\int_{Q_1}|b(y)|dy.
\end{equation*}
Take
\begin{equation*}
  \phi_1(x)=\left(sgn(b(x))-\frac{1}{|Q_1|}\int_{Q_1}sgn(b(y))dy\right)\chi_{Q_1}(x).
\end{equation*}
Then $-2\chi_{Q_1}\leq \phi_1\leq 2\chi_{Q_1}$, $b\phi_1\geqslant 0$.

Take $Q_{(l)}:=Q_1+\rho h_l^1$, $Q_{2,l}:=Q_{(l)}-\rho h_l^2$, $\phi_2:=\chi_{Q_{2,l}}$.
Then, for $x\in Q_{(l)}$, $y\in Q_1$, $z\in Q_{2,l}$,
we have $$\frac{(x-y,x-z)}{\rho}\in 2Q_0^2+h_l\subset 2Q_0^2+\Gamma_{\tau_0}\subset \widetilde{\Gamma}.$$
Thus,
\begin{equation}\label{for proof, structures, 10}
  K(x,y,z)\sim \frac{1}{(|x-y|+|x-z|)^{2n-\alpha}}\sim \frac{1}{(\rho|h_l|)^{2n-\alpha}}.
\end{equation}
By the assumption $Y\cdot Z\subset \widetilde{Y}$,
  we have
\begin{equation*}
  \begin{split}
    \|[b,T]_1(\phi_1,\phi_2)\|_Y
    \geqslant
    \|[b,T]_1(\phi_1,\phi_2)\chi_{Q_{(l)}}\|_Y
    \gtrsim
    \|[b,T]_1(\phi_1,\phi_2)\chi_{Q_{(l)}}\|_{\widetilde{Y}}/\|\chi_{Q_{(l)}}\|_Z.
  \end{split}
\end{equation*}
Combining this with
\begin{equation*}
  \|[b,T]_1(\phi_1,\phi_2)\chi_{Q_{(l)}}\|_{\widetilde{Y}}\geq A_1\|T(b\phi_1,\phi_2)\chi_{Q_{(l)}}\|_{\widetilde{Y}}-A_2\|bT(\phi_1,\phi_2)\chi_{Q_{(l)}}\|_{\widetilde{Y}},
\end{equation*}
we get
\begin{equation}\label{for proof, structures, 1}
  \|[b,T]_1(\phi_1,\phi_2)\|_Y
  \geqslant
  \frac{A_1\|T(b\phi_1,\phi_2)\chi_{Q_{(l)}}\|_{\widetilde{Y}}-A_2\|bT(\phi_1,\phi_2)\chi_{Q_{(l)}}\|_{\widetilde{Y}}}{\|\chi_{Q_{(l)}}\|_Z}.
\end{equation}
Recalling $b\phi_1\geqslant 0$, we obtain that, for $x\in Q_{(l)}$
\begin{equation*}
  \begin{split}
    |T(b\phi_1,\phi_2)(x)|
    = &
    \int_{Q_1\times Q_{2,l}} K(x,y,z)b(y)\phi_1(y)dydz
    \\
    \gtrsim &
    \frac{1}{(\rho|h_l|)^{2n-\alpha}}\int_{Q_1\times Q_{2,l}} b(y)\phi_1(y)dydz
    \\
    = &
    \frac{1}{(\rho|h_l|)^{2n-\alpha}}\int_{Q_1\times Q_{2,l}} |b(y)|dydz
    =
    \frac{\rho^{\alpha-n}\mu(Q_1)B}{|h_l|^{2n-\alpha}}.
  \end{split}
\end{equation*}
This implies that
\begin{equation}\label{for proof, structures, 2}
  \begin{split}
   \frac{ \|T(b\phi_1,\phi_2)\chi_{Q_{(l)}}\|_{\widetilde{Y}}}{\|\chi_{Q_{(l)}}\|_Z}
    \gtrsim
    \frac{\rho^{\alpha-n}\mu(Q_1)B}{|h_l|^{2n-\alpha}}\cdot \frac{\|\chi_{Q_{(l)}}\|_{\widetilde{Y}}}{\|\chi_{Q_{(l)}}\|_Z}
    \sim
    \frac{\rho^{\alpha-n}\mu(Q_1)B\|\chi_{Q_{(l)}}\|_Y}{|h_l|^{2n-\alpha}}.
  \end{split}
\end{equation}
On the other hand, the assumption $Y\cdot Z\subset \widetilde{Y}$ implies that
\begin{equation}\label{for proof, structures, 3}
  \begin{split}
   \frac{ \|bT(\phi_1,\phi_2)\chi_{Q_{(l)}}\|_{\widetilde{Y}}}{\|\chi_{Q_{(l)}}\|_Z}
    \lesssim
   \frac{\|b\chi_{Q_{(l)}}\|_{Y}\|T(\phi_1,\phi_2)\chi_{Q_{(l)}}\|_{Z}}{\|\chi_{Q_{(l)}}\|_Z}.
  \end{split}
\end{equation}
The combination of (\ref{for proof, structures, 1}), (\ref{for proof, structures, 2}) and (\ref{for proof, structures, 3})
then yields that
\begin{equation}\label{for proof, structures, 8}
  \|[b,T]_1(\phi_1,\phi_2)\|_Y
  \geqslant
  \frac{A_1\rho^{\alpha-n}\mu(Q_1)B\|\chi_{Q_{(l)}}\|_Y}{|h_l|^{2n-\alpha}}-\frac{A_2\|b\chi_{Q_{(l)}}\|_Y\|T(\phi_1,\phi_2)\chi_{Q_{(l)}}\|_Z}{\|\chi_{Q_{(l)}}\|_Z}.
\end{equation}
Take $$\psi_1=\chi_{Q_1},\ \psi_2=\phi_2=\chi_{Q_{2,l}}.$$
We have
\begin{equation}\label{for proof, structures, 5}
  \begin{split}
    \|[b,T]_1(\psi_1,\psi_2)\|_{Y}
    \geqslant &
    \|[b,T]_1(\psi_1,\psi_2)\chi_{Q_{(l)}}\|_{Y}
    \\
    \geqslant &
    A_3\|bT(\psi_1,\psi_2)\chi_{Q_{(l)}}\|_{Y}-A_4\|T(b\psi_1,\psi_2)\chi_{Q_{(l)}}\|_{Y}.
  \end{split}
\end{equation}
Recalling (\ref{for proof, structures, 10}),
\begin{equation*}
  \begin{split}
    |T(b\psi_1,\psi_2)(x)|
    \leqslant &
    \int_{Q_1\times Q_{2,l}} |K(x,y,z)||b(y)|dydz
    \\
    \lesssim &
    \frac{1}{(\rho|h_l|)^{2n-\alpha}}\int_{Q_1\times Q_{2,l}} |b(y)|dydz
    =
    \frac{\rho^{\alpha-n}\mu(Q_1)B}{|h_l|^{2n-\alpha}},\quad\forall\, x\in Q_{(l)}.
  \end{split}
\end{equation*}
This implies that
\begin{equation}\label{for proof, structures, 6}
  \begin{split}
    \|T(b\psi_1,\psi_2)\chi_{Q_{(l)}}\|_{Y}
    \lesssim
    \frac{\rho^{\alpha-n}\mu(Q_1)B\|\chi_{Q_{(l)}}\|_Y}{|h_l|^{2n-\alpha}}.
  \end{split}
\end{equation}
Also, for $x\in Q_{(l)}$,
\begin{equation*}
  \begin{split}
    |b(x)T(\psi_1,\psi_2)(x)|
    =&
    \left|b(x)\int_{Q_1\times Q_{2,l}}K(x,y,z)dydz\right|
    \\
    \gtrsim &
    \frac{|b(x)|\cdot |Q_1|\cdot |Q_{2,l}|}{(\rho|h_l|)^{2n-\alpha}}=\frac{\rho^{\alpha}|b(x)|}{|h_l|^{2n-\alpha}}.
  \end{split}
\end{equation*}
Consequently,
\begin{equation}\label{for proof, structures, 7}
  \begin{split}
    \|bT(\psi_1,\psi_2)\chi_{Q_{(l)}}\|_{Y}
    \gtrsim
    \frac{\rho^{\alpha}\|b\chi_{Q_{(l)}}\|_Y}{|h_l|^{2n-\alpha}}.
  \end{split}
\end{equation}
The combination of (\ref{for proof, structures, 5}), (\ref{for proof, structures, 6}) and (\ref{for proof, structures, 7}) yields that
\begin{equation}\label{for proof, structures, 9}
  \|[b,T]_1(\psi_1,\psi_2)\|_Y
  \geqslant
  \frac{A_3\rho^{\alpha}\|b\chi_{Q_{(l)}}\|_Y}{|h_l|^{2n-\alpha}}-\frac{A_4\rho^{\alpha-n}\mu(Q_1)B\|\chi_{Q_{(l)}}\|_Y}{|h_l|^{2n-\alpha}}.
\end{equation}
Denote
\begin{equation*}
  \Xi =\frac{A_2|h_l|^{2n-\alpha}\|T(\phi_1,\phi_2)\chi_{Q_{(l)}}\|_Z}{A_3\rho^{\alpha}\|\chi_{Q_{(l)}}\|_Z}.
\end{equation*}
Using (\ref{for proof, structures, 8}),(\ref{for proof, structures, 9}), and the boundedness of $[b,T]$, we obtain that
\begin{equation*}
  \begin{split}
    (1+\Xi)\|[b,T]_1\|_{X_1\times X_2\rightarrow Y}&(\|\phi_1\|_{X_1}\|\phi_2\|_{X_2}+\|\psi_1\|_{X_1}\|\psi_2\|_{X_2})
    \\
    \geqslant &
    \|[b,T]_1(\phi_1,\phi_2)\|_Y+\Xi\|[b,T]_1(\psi_1,\psi_2)\|_Y
    \\
    \geqslant &
    (A_1-\Xi A_4)\frac{\rho^{\alpha-n}\mu(Q_1)B\|\chi_{Q_{(l)}}\|_Y}{|h_l|^{2n-\alpha}}.
  \end{split}
\end{equation*}
If we can make $\Xi\leqslant \min\{A_1/(2A_4),1\}$, we get $A_1-\Xi A_4\geqslant \frac{A_2}{2}$,
then
\begin{equation*}
  \begin{split}
    \frac{A_1}{2}\,\cdot\,\frac{\rho^{\alpha-n}\mu(Q_1)B\|\chi_{Q_{(l)}}\|_Y}{|h_l|^{2n-\alpha}}
    \leqslant &
    (1+\Xi)\|[b,T]_1\|_{X_1\times X_2\rightarrow Y}(\|\phi_1\|_{X_1}\|\phi_2\|_{X_2}+\|\psi_1\|_{X_1}\|\psi_2\|_{X_2})
    \\
    \leqslant &
    3(1+\Xi)\|[b,T]_1\|_{X_1\times X_2\rightarrow Y}\|\chi_{Q_1}\|_{X_1}\|\chi_{Q_{2,l}}\|_{X_2}.
  \end{split}
\end{equation*}
In what follows, we will prove the desired result only under the condition $(b)$, since the arguments under the condition $(a)$ is similar.
Observing that $Q_1\subset \lambda Q_{(l)}=2(|y_0-x_0|+\sqrt{n})Q_{(l)}=2(|h_l^1|+\sqrt{n})Q_{(l)}$,
we have
\begin{equation*}
  \|\chi_{Q_1}\|_{X_1}\leqslant \|\chi_{\lambda Q_{(l)}}\|_{X_1}\leqslant C_{|h_l^1|}\|\chi_{Q_{(l)}}\|_{X_1}.
\end{equation*}
Using similar technique, we can also deduce
\begin{equation*}
  \|\chi_{Q_{2,l}}\|_{X_2} \lesssim_{l}\|\chi_{Q_{(l)}}\|_{X_2},\ \mu(Q_{(l)})\lesssim_l \mu(Q_1).
\end{equation*}
By the assumption that $\|\chi_{Q_{(l)}}\|_{X_1}\|\chi_{Q_{(l)}}\|_{X_2}\lesssim \|\chi_{Q_{(l)}}\|_Y|Q_{(l)}|^{\alpha/n-1}\mu(Q_{(l)})$,
we have
\begin{equation*}
  \begin{split}
    \|\chi_{Q_1}\|_{X_1}\|\chi_{Q_{2,l}}\|_{X_2}
    \lesssim_l &
    \|\chi_{Q_{(l)}}\|_{X_1}\|\chi_{Q_{(l)}}\|_{X_2}
    \\
    \lesssim_l &
    \|\chi_{Q_{(l)}}\|_Y|Q_{(l)}|^{\alpha/n-1}\mu(Q_{(l)})
        \\
    \lesssim_l &
    \|\chi_{Q_{(l)}}\|_Y|Q_{(l)}|^{\alpha/n-1}\mu(Q_{1}).
  \end{split}
\end{equation*}
Thus,
\begin{equation*}
  \begin{split}
    \frac{A_1}{2}\cdot\,\frac{\rho^{\alpha-n}\mu(Q_1)B\|\chi_{Q_{(l)}}\|_Y}{|h_l|^{2n-\alpha}}
    \leqslant&
    3(1+\Xi)\|[b,T]_1\|_{X_1\times X_2\rightarrow Y}\|\chi_{Q_1}\|_{X_1}\|\chi_{Q_{2,l}}\|_{X_2}
    \\
    \lesssim_l &
    \|[b,T]_1\|_{X_1\times X_2\rightarrow Y}\|\chi_{Q_{(l)}}\|_Y|Q_{(l)}|^{\alpha/n-1}\mu(Q_{1}),
  \end{split}
\end{equation*}
which implies that
\begin{equation*}
  B\lesssim_l |h_l|^{2n-\alpha}\cdot \|[b,T]_1\|_{X_1\times X_2\rightarrow Y}.
\end{equation*}

The remaining thing is to check that $\Xi$ can be chosen small for sufficient large $l$.
For $x\in Q_{(l)}$, we have
\begin{equation*}
  \begin{split}
    |T(\phi_1,\phi_2)|
    = &
    \Big|\int_{Q_1\times Q_{2,l}}K(x,y,z)\phi_1(y)dydz\Big|
    \\
    \leqslant &
    2\int_{Q_1\times Q_{2,l}}\left|K(x,y,z)-K(x,c_{Q_1},z)\right|dydz.
  \end{split}
\end{equation*}
Then,
\begin{equation*}
  \begin{split}
    \|T(\phi_1,\phi_2)\chi_{Q_{(l)}}\|_Z
    \lesssim &
    \left\|\int_{Q_1\times Q_{2,l}}|K(\cdot,y,z)-K(\cdot,c_{Q_1},z)|dydz\right\|_{Z(Q_{(l)})}
  \end{split}
\end{equation*}
This shows that
\begin{equation*}
  \begin{split}
    \Xi
    \lesssim &
    \frac{|h_l|^{2n-\alpha}}{\rho^{\alpha}\|\chi_{Q_{(l)}}\|_Z}\left\|\int_{Q_1\times Q_{2,l}}|K(\cdot,y,z)-K(\cdot,c_{Q_1},z)|dydz\right\|_{Z(Q_{(l)})}\rightarrow 0\ \text{as}\  l\rightarrow \infty.
  \end{split}
\end{equation*}
We have now completed the proof of Theorem 3.1.
\end{proof}

As in the linear setting, the pointwise estimate is also useful for the endpoint cases in multilinear setting.

\begin{proposition}[Technique, standard kernel, pointwise estimates]\label{proposition, technique, standard kernel, pointwise}
Let $m\in\mathbb{N}$. For given $i\in\{1,2,\cdots,m\}$, let $K_{\alpha}$ be a function defined off the diagonal $x=y_1=\cdots =y_m$ in $(\mathbb{R}^n)^{m+1}$, satisfying the following local property in an open cone $\widetilde{\Gamma}$ of $(\mathbb{R}^{n})^m$ with vertex at
the origin:
  \begin{enumerate}[(i)]
  \item lower and upper bound: \\$\frac{c}{(\sum_{j=1}^m|x-y_j|)^{mn-\alpha}}\leqslant K_{\alpha}(x,y_1,\cdots,y_m) \leqslant \frac{C}{(\sum_{j=1}^m|x-y_j|)^{mn-\alpha}}$
  for all $(x-y_1,\cdots,x-y_m)\in \widetilde{\Gamma}$, where $0<c<C$ or $c<C<0$,
  \item
  for any open cone $\overline{\Gamma}\subset \widetilde{\Gamma}$,
  there exists a sequence $\{h_l=(h_l^1,\cdots,h_l^m)\}_{j=1}^{\infty}$
  satisfying that $h_l\in \overline{\Gamma}$, $|h_l|\rightarrow \infty$ as $l\rightarrow \infty$, and
  \begin{equation*}
    \begin{split}
      &\left\|\int_{\prod\limits_{{j=1}}^m(Q-\sqrt[n]{Q}h_l^j)}
      |K_{\alpha}(\cdot,y_1,\cdots,y_m)-K_{\alpha}(\cdot,y_1,\cdots,y_{i-1},c_{Q}-\sqrt[n]{Q}h_l^i,y_{i+1},\cdots,y_m)|dy_1\cdots\,dy_m\right\|_{L^{\infty}(Q)}
      \\
 &\qquad\times \frac{|h_l|^{mn-\alpha}}{|Q|^{\alpha/n}}\rightarrow 0,
  \ \text{as}\ l \rightarrow \infty\ \text{uniformly for all cubes}\ Q,\, {\rm where}\, c_Q \,{\rm denotes\, the\, center\, of\,} Q.
    \end{split}
  \end{equation*}
  \end{enumerate}
Let $T_{K_{\alpha}}$ be a m-linear operator associated with $K_\alpha$, $b\in L_{loc}^1(\mathbb{R}^n)$, $[b,T_{K_{\alpha}}]_i$ be the $i-th$ commutator formed by  $T_{K_{\alpha}}$ with $b$. Then, for any cube denoted by $Q_i\subset \mathbb{R}^n$, there exist two function sequences $\{\phi_j\}_{j=1}^m$ and $\{\psi_j\}_{j=1}^m$ satisfying
  $|\phi_j|,|\psi_j|\leq 2\chi_{Q_j} (j=1,\cdots,m)$, where $Q_j (j\neq i)$ are certain cubes with the same side length of $Q_i$,
  and there exists a cube $Q$ with the same side length of $Q_i$,
  $Q_j\subset \lambda Q$ for all $j=1,2,\cdots,m$ and some $\lambda>0$ independent of $Q_i$,
  such that
\begin{equation*}
  \begin{split}
   \frac{1}{|Q_i|^{1-\alpha/n}}\int_{Q_i}|b(y)-b_{Q_i}|dy
    \leq \widetilde{C}
    \left(|[b,T_{K_\alpha}]_i(\phi_1,\cdots,\phi_m)(x)|+|[b,T_{K_\alpha}]_i(\psi_1,\cdots,\psi_m)(x)|\right),\quad\forall\, x\in Q,
  \end{split}
\end{equation*}
where the constant $\widetilde{C}$ is independent of the choice of $Q_i$.
\end{proposition}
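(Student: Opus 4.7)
The plan is to run the pointwise scheme of Proposition \ref{proposition, technique, homogeneous kernel, pointwise} in the multilinear setting, with the $Z$-norm machinery of Theorem \ref{Theorem, structure, standard kernel} replaced by pointwise ($L^\infty$-style) estimates coming from hypothesis (ii). As in those earlier arguments, it suffices to treat $0<c<C$, $m=2$, $i=1$; abbreviate $T=T_{K_\alpha}$, $K=K_\alpha$. Using the bounds of $K$ on $\widetilde\Gamma$, I would first select an open cone $\Gamma\subset\widetilde\Gamma$ and $\tau_0>0$ so that $u+v\in\widetilde\Gamma$ whenever $u\in 2Q_0^2$ and $v\in\Gamma_{\tau_0}=\Gamma\cap B^c(0,\tau_0)$, and then extract from (ii) a sequence $\{h_l=(h_l^1,h_l^2)\}\subset\Gamma_{\tau_0}$ with $|h_l|\to\infty$ and the stipulated uniform-in-$Q$ $L^\infty$-decay.

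Fix the cube $Q_i=Q_1$ of side length $\rho$, assume $b_{Q_1}=0$, and set $B:=|Q_1|^{-1}\int_{Q_1}|b|$. Following Proposition \ref{proposition, technique, homogeneous kernel, pointwise}, define $\phi_1(x)=\bigl(\mathrm{sgn}\,b(x)-|Q_1|^{-1}\int_{Q_1}\mathrm{sgn}\,b\bigr)\chi_{Q_1}(x)$ and $\psi_1=\chi_{Q_1}$, and introduce the shifted cubes $Q_{(l)}:=Q_1+\rho h_l^1$, $Q_{2,l}:=Q_{(l)}-\rho h_l^2$ together with $\phi_2=\psi_2=\chi_{Q_{2,l}}$; all of these cubes have side length $\rho$. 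For $(x,y,z)\in Q_{(l)}\times Q_1\times Q_{2,l}$ the point $(x-y,x-z)/\rho\in 2Q_0^2+h_l\subset\widetilde\Gamma$, so $K(x,y,z)\sim(\rho|h_l|)^{\alpha-2n}$. As in the proof of Theorem \ref{Theorem, structure, standard kernel}, this yields, for every $x\in Q_{(l)}$, the lower bounds $|T(b\phi_1,\phi_2)(x)|\gtrsim B\rho^\alpha|h_l|^{\alpha-2n}$ and $|b(x)T(\psi_1,\psi_2)(x)|\gtrsim |b(x)|\rho^\alpha|h_l|^{\alpha-2n}$ together with the upper bound $|T(b\psi_1,\psi_2)(x)|\lesssim B\rho^\alpha|h_l|^{\alpha-2n}$. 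Rearranging the definition of the commutator gives
\[
|[b,T]_1(\phi_1,\phi_2)(x)|\ge \frac{A_1 B\rho^\alpha}{|h_l|^{2n-\alpha}}-|b(x)T(\phi_1,\phi_2)(x)|,\qquad |[b,T]_1(\psi_1,\psi_2)(x)|\ge \frac{(A_3|b(x)|-A_4B)\rho^\alpha}{|h_l|^{2n-\alpha}}.
\]

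The crux of the argument is to introduce the pointwise multiplier
\[
\Xi(x):=\frac{|h_l|^{2n-\alpha}|T(\phi_1,\phi_2)(x)|}{A_3\rho^\alpha}
\]
and, using $\int_{Q_1}\phi_1=0$ to rewrite $T(\phi_1,\phi_2)(x)$ as the cancellation integral $\int_{Q_1\times Q_{2,l}}|K(x,y,z)-K(x,c_{Q_1},z)|\,dy\,dz$, to feed hypothesis (ii) (applied to $Q=Q_{(l)}$) into a uniform bound $\Xi(x)\to 0$ in $x\in Q_{(l)}$ and in the underlying $Q_1$ as $l\to\infty$. Once $l=\mathbf{l_0}$ is chosen large enough that $\Xi(x)\le\min\{A_1/(2A_4),1\}$ on $Q_{(\mathbf{l_0})}$, forming $|[b,T]_1(\phi_1,\phi_2)(x)|+\Xi(x)|[b,T]_1(\psi_1,\psi_2)(x)|$ makes the cross term $|b(x)T(\phi_1,\phi_2)(x)|$ cancel, and one arrives at
\[
|[b,T]_1(\phi_1,\phi_2)(x)|+|[b,T]_1(\psi_1,\psi_2)(x)|\gtrsim \frac{B\rho^\alpha}{|h_{\mathbf{l_0}}|^{2n-\alpha}}=\frac{1}{|h_{\mathbf{l_0}}|^{2n-\alpha}}\cdot\frac{1}{|Q_1|^{1-\alpha/n}}\int_{Q_1}|b-b_{Q_1}|\,dy,
\]
uniformly in $x\in Q:=Q_{(\mathbf{l_0})}$. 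Since $Q_1$ and $Q_{2,\mathbf{l_0}}$ both sit inside a dilate $\lambda Q$ with $\lambda$ depending only on $h_{\mathbf{l_0}}$ (in particular independent of $Q_1$), the claim follows with $\widetilde C=2|h_{\mathbf{l_0}}|^{2n-\alpha}/A_1$.

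The main obstacle I expect is precisely verifying the uniformity (in $x\in Q_{(l)}$ and in the base cube $Q_1$) of the decay $\Xi(x)\to 0$: hypothesis (ii) is stated for a generic cube $Q$ with the normalisation factor $|h_l|^{mn-\alpha}/|Q|^{\alpha/n}$, and matching this to the pointwise quantity $\Xi(x)$ requires the translation/dilation identification carried out at the end of the proof of Theorem \ref{Theorem, structure, standard kernel}, after which the rest of the argument follows the linear template of Proposition \ref{proposition, technique, homogeneous kernel, pointwise} almost mechanically.
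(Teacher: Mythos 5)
Your proposal follows essentially the same route as the paper's own proof: the same reduction to $m=2$, $i=1$, the same test functions $\phi_1,\psi_1$ on $Q_1$ and $\phi_2=\psi_2=\chi_{Q_{2,l}}$ on the shifted cube, the same lower/upper bounds via $K(x,y,z)\sim(\rho|h_l|)^{\alpha-2n}$, the same pointwise multiplier $\Xi(x)$ driven to be small by hypothesis (ii) applied to $Q=Q_{(l)}$, and the same final combination yielding the bound on $Q=Q_{(\mathbf{l_0})}$. The argument is correct and matches the paper's proof step for step.
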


\begin{proof}
For simplicity in notation and proof, we only give the arguments for $m=2$, $i=1$, since the cases for $m\geq 3$ can be treated similarly.
  Without loss of generality, we may assume that
  \begin{equation*}
    \frac{c}{(|x-y|+|x-z|)^{2n-\alpha}}\leqslant K_{\alpha}(x,y,z) \leqslant \frac{C}{(|x-y|+|x-z|)^{2n-\alpha}}
  \end{equation*}
    for all $(x-y,x-z)\in \widetilde{\Gamma}$, where $0<c<C$.
 We use $Q_0=[-1/2,1/2]^n$ to denote the unit cube in $\mathbb{R}^{n}$,
and $Q_0^2=[-1/2,1/2]^{2n}$ the unit cube in $\mathbb{R}^{2n}$.
Since the exact value of $\alpha$ does not affect the proof,
we abbreviate $T_{K_{\alpha}}$ to $T$, and abbreviate $K_{\alpha}$ to $K$.
Choose a constant $\tau_0$ and a nonempty open cone
$\Gamma\subset \widetilde{\Gamma}\subset \mathbb{R}^{2n}$ with vertex at the origin,
such that for any $u\in 2Q_0^2$, $v\in \Gamma_{\tau_0}=\Gamma\cap B^c(0,\tau_0)$, we have $u+v\in \widetilde{\Gamma}$.

Furthermore, we can find a sequence $\{h_l=(h_l^1,h_l^2)\}_{l=1}^{\infty}$ satisfying that $h_l\in \Gamma_{\tau_0}$, and $|h_l|\rightarrow \infty$
as $l\rightarrow \infty$, such that
\begin{equation}\label{for proof, pointwise, 4}
  \begin{split}
  \frac{|h_l|^{2n-\alpha}}{|Q|^{\alpha/n}}\left\|\int_{\prod_{j=1}^2(Q-\sqrt[n]{|Q|}h_l^j)}
  |K(\cdot,y,z)-K(\cdot,c_{Q}-\sqrt[n]{|Q|}h_l^1,z)|dydz\right\|_{L^{\infty}(Q)}\rightarrow 0
  \end{split}
\end{equation}
as $l\rightarrow \infty$, uniformly for all cubes $Q$.

Let $Q_1$ be a fixed cube with side length $\rho$. Without loss of generality, we may assume
\begin{equation*}
  \int_{Q_1}b(y)dy=0,
\end{equation*}
and set
\begin{equation}
  B:=\frac{1}{|Q_1|}\int_{Q_1}|b(y)|dy.
\end{equation}
Take
\begin{equation*}
  \phi_1(x)=\Big(sgn(b(x))-\frac{1}{|Q_1|}\int_{Q_1}sgn(b(y))dy\Big)\chi_{Q_1}(x).
\end{equation*}
Then $|\phi_1|\leq 2\chi_{Q_1}$, $b\phi\geq 0$.
Let $Q_{(l)}:=Q_1+\rho h_l^1$, $Q_{2,l}:=Q_{(l)}-\rho h_l^2$, and $\phi_2:=\chi_{Q_{2,l}}$. As in the proof of Theorem \ref{Theorem, structure, standard kernel}, we can deduce that
\begin{equation*}
  \begin{split}
    |T(b\phi_1,\phi_2)(x)|
    \gtrsim &
    \frac{1}{(\rho|h_l|)^{2n-\alpha}}\int_{Q_1\times Q_{2,l}} b(y)\phi_1(y)dydz
    \\
    = &
    \frac{1}{(\rho|h_l|)^{2n-\alpha}}\int_{Q_1\times Q_{2,l}} |b(y)|dydz
    =
    \frac{\rho^{\alpha-n}|Q_1|B}{|h_l|^{2n-\alpha}},\quad\forall\,x\in Q_{(l)}.
  \end{split}
\end{equation*}
Hence,
\begin{equation*}
  \begin{split}
    |[b,T]_1(\phi_1,\phi_2)(x)|
    \geq &
    |T(b\phi_1,\phi_2)(x)|-|b(x)T(\phi_1,\phi_2)(x)|
    \\
    \geq &
    \frac{A_1\rho^{\alpha-n}|Q_1|B}{|h_l|^{2n-\alpha}}-|b(x)T(\phi_1,\phi_2)(x)|,\quad\forall\,x\in Q_{(l)}.
  \end{split}
\end{equation*}
Moreover, take $\psi_1:=\chi_{Q_1},\ \psi_2:=\phi_2=\chi_{Q_{2,l}}$.
As in the proof of Theorem \ref{Theorem, structure, standard kernel}, we deduce that, for $x\in Q_{(l)}$,
\begin{equation*}
  \begin{split}
    |T(b\psi_1,\psi_2)(x)|
    \leqslant &
    |\int_{Q_1\times Q_{2,l}} K(x,y,z)|b(y)|dydz
    \\
    \lesssim &
    \frac{1}{(\rho|h_l|)^{2n-\alpha}}\int_{Q_1\times Q_{2,l}} |b(y)|dydz
    =
    \frac{\rho^{\alpha-n}|Q_1|B}{|h_l|^{2n-\alpha}},
  \end{split}
\end{equation*}
and
\begin{equation*}
  \begin{split}
    |b(x)T(\psi_1,\psi_2)(x)|
    =&
    \left|b(x)\int_{Q_1\times Q_{2,l}}K(x,y,z)dydz\right|
    \\
    \gtrsim &
    \frac{|b(x)|\cdot |Q_1|\cdot |Q_{2,l}|}{(\rho|h_l|)^{2n-\alpha}}=\frac{\rho^{\alpha}|b(x)|}{|h_l|^{2n-\alpha}}.
  \end{split}
\end{equation*}
This implies that
\begin{equation*}
  \begin{split}
    |[b,T]_1(\psi_1,\psi_2)(x)|
    \geq &
    |b(x)T(\psi_1,\psi_2)(x)|-|T(b\psi_1,\psi_2)(x)|
    \\
    \geq &
    \frac{A_3\rho^{\alpha}|b(x)|}{|h_l|^{2n-\alpha}}-\frac{A_4\rho^{\alpha-n}|Q_1|B}{|h_l|^{2n-\alpha}},\quad\forall\,x\in Q_{(l)}.
  \end{split}
\end{equation*}
Denote
\begin{equation}
  \Xi(x): =\frac{A_2|h_l|^{2n-\alpha}|T(\phi_1,\phi_2)(x)\chi_{Q}(x)|}{A_3\rho^{\alpha}}.
\end{equation}
By (\ref{for proof, pointwise, 4}) and the same arguments as in the proof of Theorem \ref{Theorem, structure, standard kernel},
we take sufficient large $l=\mathbf{l_0}$ independent of $Q_1$, to ensure
\begin{equation*}
  \Xi(x)\leq \min\{\frac{A_1}{2A_4},1\},\ \forall x\in Q_{(l)}.
\end{equation*}
Then
\begin{equation*}
  \begin{split}
    |[b,T]_1(\phi_1,\phi_2)(x)|+|b(x)T(\psi_1,\psi_2)(x)|
    \geq &
    |[b,T]_1(\phi_1,\phi_2)(x)|+\Xi(x)|b(x)T(\psi_1,\psi_2)(x)|
    \\
    \geq &
    \frac{A_1\rho^{\alpha-n}|Q_1|B}{2|h_l|^{2n-\alpha}}
    =
    \frac{A_1}{2|h_l|^{2n\alpha}|Q_1|^{1-\alpha/n}}\int_{Q_1}|b(y)|dy
  \end{split}
\end{equation*}
for all $x\in Q_{(\mathbf{l_0})}=Q_1+\rho h_{\mathbf{l_0}}^1$.
Take $Q=Q_{(\mathbf{l_0})}$, $Q_2=Q_{2,\mathbf{l_0}}$. Recall the side length of $Q_1$ is $\rho$.
So, there exists $\lambda$ depend only on $h_{\mathbf{l_0}}$,
such that $Q_1\subset \lambda Q$ and $Q_2\subset \widetilde{\lambda} Q$.
We have now completed this proof.
\end{proof}

\begin{proposition}[Technique, standard kernel, multilinear, $Z=L^{\infty}$]\label{proposition, technique, standard kernel, Z=L^infty}
Let $K_{\alpha}$ be a function defined off the diagonal $x=y_1=\cdots =y_m$ in $(\mathbb{R}^n)^{m+1}$, satisfying
\begin{equation*}
  |K_{\alpha}(x,y_1,\cdots,y_i,\cdots,y_m)-K_{\alpha}(x,y_1,\cdots,y_i',\cdots,y_m)|
  \lesssim \frac{|y_i-y_i'|^{\delta}}{(\sum_{j=1}^m|x-y_j|)^{mn-\alpha+\delta}},
\end{equation*}
whenever $|y_i-y_i'|\leqslant \frac{1}{2}\max_{1\leqslant j\leqslant m}|x-y_j|$.
Then, for any open cone $\widetilde{\Gamma}\subset (\mathbb{R}^n)^m$, there exists an open cone $\Gamma\subset \widetilde{\Gamma}$
such that for every $\{h_l=(h_l^1,\cdots,h_l^m)\}_l\subset \Gamma$ satisfying $|h_l|\rightarrow \infty$ as $l\rightarrow \infty$, we have
\begin{equation*}
    \begin{split}
      &\left\|\int_{\prod\limits_{{j=1}}^m(Q-\sqrt[n]{Q}h_l^j)}
      |K_{\alpha}(\cdot,y_1,\cdots,y_m)-K_{\alpha}(\cdot,y_1,\cdots,y_{i-1},c_{Q}-\sqrt[n]{|Q|}h_l^i,y_{i+1},\cdots,y_m)|dy_1\cdots\,dy_m\right\|_{L^{\infty}(Q)}
      \\
  &\qquad\times\frac{|h_l|^{mn-\alpha}}{|Q|^{\alpha/n}}\rightarrow 0,
  \ \text{as}\ l \rightarrow \infty\ \text{uniformly for all cubes}\ Q\subset\mathbb{R}^n,\, \text{where}\, c_Q\, \text{denotes the center of}\, Q.
    \end{split}
  \end{equation*}
\end{proposition}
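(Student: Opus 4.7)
The plan is to reduce the claim to a direct application of the Hölder smoothness hypothesis on $K_{\alpha}$, followed by a volume-scaling computation. I would simply take $\Gamma=\widetilde{\Gamma}$ itself (any open sub-cone would also work), relying on the trivial inequality $\max_{1\leq j\leq m}|h^j|\geq |h|/\sqrt{m}$ valid for every $h=(h^1,\ldots,h^m)\in(\mathbb{R}^n)^m$.

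Write $\ell:=|Q|^{1/n}$. For $x\in Q$ and $y_j\in Q-\ell h_l^j$, expressing $y_j=q_j-\ell h_l^j$ with $q_j\in Q$ gives $x-y_j=(x-q_j)+\ell h_l^j$ with $|x-q_j|\leq \sqrt{n}\,\ell$. Hence, for $|h_l|$ large enough independently of $Q$,
$$\max_j|x-y_j|\;\geq\;\ell|h_l|/\sqrt{m}-\sqrt{n}\,\ell\;\gtrsim\;\ell|h_l|,$$
and setting $y_i':=c_Q-\ell h_l^i$ we have $|y_i-y_i'|=|q_i-c_Q|\leq \sqrt{n}\,\ell/2\leq \tfrac{1}{2}\max_j|x-y_j|$. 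This activates the smoothness condition, giving
$$|K_{\alpha}(x,y_1,\ldots,y_m)-K_{\alpha}(x,\ldots,y_{i-1},y_i',y_{i+1},\ldots,y_m)|\;\lesssim\;\frac{|y_i-y_i'|^{\delta}}{\bigl(\sum_j|x-y_j|\bigr)^{mn-\alpha+\delta}}\;\lesssim\;\frac{\ell^{\delta}}{(\ell|h_l|)^{mn-\alpha+\delta}}.$$

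Next I would integrate this pointwise bound over $\prod_{j=1}^m(Q-\ell h_l^j)$, a set of Lebesgue measure $\ell^{mn}$, which produces
$$\int_{\prod_j(Q-\ell h_l^j)}\bigl|K_{\alpha}(\cdot,\ldots)-K_{\alpha}(\cdot,\ldots,y_i',\ldots)\bigr|\,dy_1\cdots dy_m\;\lesssim\;\frac{\ell^{mn+\delta}}{(\ell|h_l|)^{mn-\alpha+\delta}}=\frac{\ell^{\alpha}}{|h_l|^{mn-\alpha+\delta}}.$$
Multiplying by the prefactor $|h_l|^{mn-\alpha}/|Q|^{\alpha/n}=|h_l|^{mn-\alpha}/\ell^{\alpha}$ then yields the bound $|h_l|^{-\delta}\to 0$. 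Since the estimate is pointwise in $x\in Q$, passing to the $L^{\infty}(Q)$ norm preserves it, and since all implicit constants depend only on $m,n,\alpha,\delta$ and the Hölder constant of $K_{\alpha}$, the convergence is uniform in $Q$.

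There is no fundamental obstacle: the proof is essentially a careful scaling calculation. The only point requiring attention is to verify that the smoothness hypothesis $|y_i-y_i'|\leq \tfrac{1}{2}\max_j|x-y_j|$ holds for \emph{all} choices of $x\in Q$ and $y_j\in Q-\ell h_l^j$ \emph{simultaneously} as $Q$ varies; this is secured by the cone-free lower bound $\max_j|h^j|\geq |h|/\sqrt{m}$ combined with the elementary geometric estimate $|x-q_j|\leq\sqrt{n}\,\ell$ above.
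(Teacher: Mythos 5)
Your proof is correct and follows essentially the same route as the paper's: both reduce the claim to the H\"older-regularity hypothesis on $K_{\alpha}$ (after checking $|y_i-y_i'|\le \tfrac12\max_j|x-y_j|$) and conclude by the scaling computation that yields the decay $|h_l|^{-\delta}$, uniformly in $Q$. The only (harmless) difference is that you lower-bound $\sum_j|x-y_j|$ by $\max_j|x-y_j|\gtrsim |Q|^{1/n}|h_l|$ via $\max_j|h^j|\ge |h|/\sqrt{m}$, which lets you keep $\Gamma=\widetilde{\Gamma}$, whereas the paper bounds it by the single term $|x-y_i|$ and therefore shrinks to a sub-cone on which $|h|\sim|h^i|$; since the statement only asserts the existence of some sub-cone, both choices are legitimate.
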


\begin{proof}
Firstly, we verify that
\begin{equation}\label{for proof, technieuq, 1}
  \begin{split}
    &\left\|\int_{\prod\limits_{{j=1}}^mQ_j}|K_{\alpha}(\cdot,y_1,\cdots,y_m)-K_{\alpha}(\cdot,y_1,\cdots,y_{i-1},c_{Q_i},y_{i+1},\cdots,y_m)|dy_1\cdots,dy_m\right\|_{L^{\infty}(Q)}
    \\
    &\qquad\times    \frac{|c_{Q}-c_{Q_i}|^{mn-\alpha}}{|Q|^m}\rightarrow 0
  \end{split}
\end{equation}
uniformly for all cubes $Q$, $Q_j\,(j=1,\cdots,m)$ with $|Q|=|Q_j|,\, j=1,\cdots, m$,
  as $|c_Q-c_{Q_i}|/\sqrt[n]{|Q|}\rightarrow \infty$.

When the quantity $|c_Q-c_{Q_i}|/\sqrt[n]{|Q|} $ sufficient large,
if $x\in Q$, $y_i\in Q_i$, and $|Q|=|Q_1|$, we have
\begin{equation}
  |y_i-c_{Q_i}|\leqslant \frac{1}{2}|x-y_i|\leqslant \frac{1}{2}\max_{1\leqslant j\leqslant m}|x-y_j|,
\end{equation}
which implies that
\begin{equation*}
  \begin{split}
    |K_{\alpha}(x,y_1,\cdots,y_m)-K_{\alpha}(x,y_1,\cdots,y_{i-1},c_{Q_i},y_{i+1},\cdots,y_m)|
    \lesssim &
    \frac{|y_i-c_{Q_i}|^{\delta}}{(\sum_{j=1}^m|x-y_j|)^{mn-\alpha+\delta}}
    \\
    \lesssim &
    \frac{|Q_i|^{\delta/n}}{|c_Q-c_{Q_i}|^{mn-\alpha+\delta}}.
  \end{split}
\end{equation*}
Thus, for $x\in Q$,
\begin{equation*}
  \begin{split}
    &\frac{|c_{Q}-c_{Q_i}|^{mn-\alpha}}{|Q|^m}\int_{\prod\limits_{{j=1}}^mQ_i}|K_{\alpha}(x,y_1,\cdots,y_m)
    -K_{\alpha}(x,\cdots,y_{i-1},c_{Q_i},y_{i+1},\cdots,y_m)|dy_1\cdots\,dy_m\\
    &\qquad\lesssim
    \frac{|c_{Q}-c_{Q_i}|^{mn-\alpha}}{|Q|^m}\cdot\frac{|Q_i|^{\delta/n}}{|c_Q-c_{Q_i}|^{mn-\alpha+\delta}}\cdot |Q|^m
   =
    \frac{|Q_i|^{\delta/n}}{|c_Q-c_{Q_i}|^{\delta}}\rightarrow 0,\quad\text{as}\,\,\frac {|c_Q-c_{Q_i}|}{\sqrt[n]{|Q|}}\to \infty.
  \end{split}
\end{equation*}

For every open cone $\widetilde{\Gamma}$, choose an open cone $\Gamma\subset \widetilde{\Gamma}$ such that for every
$h=(h^1,\cdots,h^j,\cdots,h^m)\in \Gamma$, we have $|h|\sim |h^i|$, where $h^i\in \mathbb{R}^n$.
Let $Q_j=Q-\sqrt[n]{|Q|}h_l^j$ for every $j=1,\cdots,m$. Using (\ref{for proof, technieuq, 1}),
$|c_Q-c_{Q_i}|/\sqrt[n]{|Q|}=|h_l^i|$
and the fact $|h_l|\sim |h_l^i|$, we deduce that
\begin{equation*}
    \begin{split}
      &\left\|\int_{\prod\limits_{{j=1}}^m(Q-\sqrt[n]{Q}h_l^j)}
      |K_{\alpha}(\cdot,y_1,\cdots,y_m)-K_{\alpha}(\cdot,y_1,\cdots,y_{i-1},c_{Q}-\sqrt[n]{|Q|}h_l^i,y_{i+1},\cdots,y_m)|dy_1\cdots\,dy_m\right\|_{L^{\infty}(Q)}
      \\
  &\qquad\qquad\times \frac{|h_l|^{mn-\alpha}}{|Q|^{\alpha/n}}\rightarrow 0,
  \ \text{as}\ l \rightarrow \infty\ \text{uniformly for all cubes}\ Q\subset \mathbb{R}^n.
    \end{split}
\end{equation*}
This completes the proof of Proposition 3.3.
\end{proof}

\begin{remark}
  As in the linear setting, in most of the cases (unweighted, non endpoint cases), $Z=L^1$ has its advantage over $Z=L^{\infty}$,
  since the assumption of kernel with $Z=L^{1}$ is weaker than that with $Z=L^{\infty}$.
  However, in most of the previous works for multilinear commutators, the corresponding kernel is assumed to be "smooth enough",
  which can be handled by using $Z=L^{\infty}$. Therefore, we here deal with the situation only for $Z=L^{\infty}$, and
 the corresponding technique proposition for the case of  $Z=L^1$ in multilinear setting can be established similarly like in linear setting.
\end{remark}

\section{Examples and applications}
By the basic assumption of Quasi-Banach space, $Y\cdot L^{\infty}\subset Y$ is naturally established.
Thus, we can take $Z=L^{\infty}$, $\widetilde{Y}=Y$ in Theorems \ref{theorem, structure, homogeneous kernel} and \ref{Theorem, structure, standard kernel}.
However, the corresponding conditions of kernel can be further weakened if the auxiliary space is chosen to be $Z=L^1$.
In this section, we will show that for a large number of important space $Y$, not only for $Z=L^{\infty}$ but also for $Z=L^1$
there exists a Quasi-Banach space $\widetilde{Y}$ such that $\widetilde{Y}\cdot Z\subset Y$.
Moreover, these pairs of Quasi-Banach spaces satisfy the assumptions
in Theorems \ref{theorem, structure, homogeneous kernel} and \ref{Theorem, structure, standard kernel}.

We remark that in the weighted cases, our trick for $Z=L^1(\mathbb{R}^n)$ can not be used, since it is quite hard to find suitable $\widetilde{Y}$ such that
$\|\chi_{Q}\|_{\widetilde{Y}}\sim \|\chi_{Q}\|_{Y}\cdot \|\chi_{Q}\|_{L^1}$ in this case.
In fact, in Theorem \ref{theorem, structure, homogeneous kernel},
if $Y$ is a weighted space, the auxiliary spaces $\widetilde{Y}$ and $Z$ must also be weighted spaces,
unless we choose $Z=L^{\infty}$. So, in the weighted cases below, we will take $Z=L^{\infty}$ to deal with the corresponding results.

In what follows, we will apply our general theory to present some new characterizations of ${\rm BMO}_\mu$ in various settings.

\subsection{$BMO$ and Commutators in Linear Setting}

\subsubsection{$BMO$ and commutators in weak-type Lebesgue spaces}

The investigation on the boundedness and characterization of commutators has been paid lots of attention since the celebrated work established by Coifman, Rochberg and Weiss in \cite{CRW-1976}. In particular, we can found the following known results:

\medskip

{\bf Theorem C}.\quad{\it Let $1<p<\infty$, $T_0$ be the singular integral operator associated to $\Omega$ with the homogenous of degree $0$. Suppose $\Omega\in {\rm Lip}_1(\mathbb{S}^{n-1})$ satisfying (\ref{mean value zero}), $b\in \bigcup_{q> 1}L_{\rm loc}^q(\mathbb{R}^n)$. Then
\begin{enumerate}[(i)]
  \item {\rm (cf. \cite{CRW-1976,Jan})}
$b\in BMO(\mathbb{R}^n)\,\Leftrightarrow \, [b,T_\Omega]: \, L^p(\mathbb{R}^n)\rightarrow\, L^p(\mathbb{R}^n);$
\item {\rm (cf. \cite{Perez-1995})}\quad $b\in BMO(\mathbb{R}^n)\,\Rightarrow\, |\{x\in\mathbb{R}^n:\, |[b,T_\Omega]f(x)|>\lambda\}|\lesssim \displaystyle\int_{\mathbb{R}^n}\Phi(\frac{|f(x)|}{\lambda})dx$ for any $\lambda>0$, where $\Phi(t)=t(1+\log^+t)$;
    \item {\rm(cf. \cite{Chanillo-1982})}\quad $b\in BMO(\mathbb{R}^n)\,\Leftrightarrow \, [b,I_\alpha]: \, L^p(\mathbb{R}^n)\rightarrow\, L^q(\mathbb{R}^n),$
    for $0<\alpha<n$, $1<p<n/\alpha$, $1/q=1/p-\alpha/n$.
\end{enumerate}
}

\medskip

Applying our general theorem, we will weaken the condition of $\Omega$ and establish the characterization of $BMO$ via the boundedness of $[b,T_\alpha]$ in the weak-type Lebesgue spaces, including the endpoint case $p=1$. At first, we present an auxiliary lemma.

\begin{lemma}\label{lemma, auxiliary space, Lorentz}
  Suppose $0<p,q\leq\infty$, $1/\widetilde{p}=1+1/p, 1/\widetilde{q}=1+1/q$.
  Let $Y=L^{p,q}$ and $\widetilde{Y}=L^{\widetilde{p},\widetilde{q}}$ (the Lorentz spaces), $Z=L^1$.
  Then
\begin{equation*}
  Y\cdot Z\subset\widetilde{Y},\hspace{6mm}\|\chi_{Q}\|_{\widetilde{Y}}\sim \|\chi_{Q}\|_{Y}\cdot \|\chi_{Q}\|_{Z}.
\end{equation*}
\end{lemma}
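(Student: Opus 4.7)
The plan is to reduce both claims to standard properties of Lorentz quasi-norms expressed through the decreasing rearrangement $f^*$. The inclusion $Y\cdot Z\subset\widetilde{Y}$ will follow from the H\"older-type inequality for Lorentz spaces: for $0<p_i,q_i\le\infty$ with $1/r=1/p_1+1/p_2$ and $1/s=1/q_1+1/q_2$, one has $\|fg\|_{L^{r,s}}\lesssim \|f\|_{L^{p_1,q_1}}\|g\|_{L^{p_2,q_2}}$. Specialising to $p_1=p$, $q_1=q$, $p_2=q_2=1$, so that $L^{1,1}=L^1=Z$, the given relations $1/\widetilde{p}=1+1/p$, $1/\widetilde{q}=1+1/q$ immediately yield $\|fg\|_{\widetilde{Y}}\lesssim \|f\|_Y\|g\|_Z$.

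Should a self-contained proof of this inequality be required, I would start from the rearrangement bound $(fg)^*(t)\le f^*(t/2)g^*(t/2)$, substitute it into $\|fg\|_{L^{\widetilde{p},\widetilde{q}}}=\bigl(\int_0^\infty (t^{1/\widetilde{p}}(fg)^*(t))^{\widetilde{q}}\,dt/t\bigr)^{1/\widetilde{q}}$, split the weight as $t^{1/\widetilde{p}}=t^{1/p}\cdot t$, and apply ordinary H\"older on $(0,\infty)$ with measure $dt/t$ and conjugate exponents $q/\widetilde{q}$ and $1/\widetilde{q}$. Both exponents are at least $1$ since $1/\widetilde{q}=1/q+1$ forces $\widetilde{q}\le\min(q,1)$, and $\widetilde{q}/q+\widetilde{q}=\widetilde{q}(1/q+1)=1$; the trailing factor reduces to $\int_0^\infty g^*(t)\,dt=\|g\|_{L^1}$, closing the estimate.

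For the characteristic-function equivalence, the decreasing rearrangement of $\chi_Q$ is $\chi_{(0,|Q|)}$, so every Lorentz norm of $\chi_Q$ collapses to an elementary integral. A direct calculation yields $\|\chi_Q\|_{L^{p,q}}\sim |Q|^{1/p}$ and $\|\chi_Q\|_{L^{\widetilde{p},\widetilde{q}}}\sim |Q|^{1/\widetilde{p}}=|Q|^{1+1/p}$, with constants depending only on the indices, while trivially $\|\chi_Q\|_Z=|Q|$. Multiplying, $\|\chi_Q\|_Y\cdot\|\chi_Q\|_Z\sim |Q|^{1/p}\cdot|Q|=|Q|^{1+1/p}\sim \|\chi_Q\|_{\widetilde{Y}}$. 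I anticipate no real obstacle: the only point requiring care is the quasi-Banach regime in which some of $p,q,\widetilde{p},\widetilde{q}$ drops below $1$ or reaches $\infty$, but this affects neither the rearrangement step nor the admissibility of the H\"older exponents above, and the $L^\infty$-endpoints only simplify the computation.
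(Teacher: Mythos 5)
Your argument is correct and essentially identical to the paper's: the authors also start from $(fg)^*(t)\le f^*(t/2)g^*(t/2)$, split the weight $t^{1/\widetilde p}=t\cdot t^{1/p}$, and apply H\"older in $L^{\widetilde q}(dt/t)$ with exponents $1$ and $q$ to land on $\|f\|_{L^{1,1}}\|g\|_{L^{p,q}}$, then compute the characteristic-function norms directly from $\chi_Q^*=\chi_{(0,|Q|)}$. Your explicit verification that the H\"older exponents $1/\widetilde q$ and $q/\widetilde q$ are admissible is a small point the paper leaves implicit.
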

\begin{proof}
Recalling the definition of Lorentz space:
\begin{equation*}
  \|f\|_{L^{p,q}}=\|t^{1/p}f^*(t)\|_{L^q(\mathbb{R}^+,\frac{dt}{t})},
\end{equation*}
where $f^*$ is the decreasing rearrangement of $f$.

Let $1/p=1/p_1+1/p_2$ and $1/q=1/q_1+1/q_2$.
Using the property: $(fg)^*(t_1+t_2)\leqslant f^*(t_1)g^*(t_2)$,
we obtain that
\begin{equation*}
  \begin{split}
    \|fg\|_{\widetilde{Y}}
    =
    \|fg\|_{L^{\widetilde{p},\widetilde{q}}}
    = &
    \|t^{1/\widetilde{p}}(fg)^*(t)\|_{L^{\widetilde{q}}(\mathbb{R}^+,\frac{dt}{t})}
    \\
    \lesssim &
    \|tf^*(t/2)t^{1/p}g^*(t/2)\|_{L^{\widetilde{q}}(\mathbb{R}^+,\frac{dt}{t})}
    \\
    \lesssim &
    \|tf^*(t/2)\|_{L^{1}(\mathbb{R}^+,\frac{dt}{t})}
    \|t^{1/p}g^*(t/2)\|_{L^{q}(\mathbb{R}^+,\frac{dt}{t})}
    \\
    \lesssim &
    \|f\|_{L^{1,1}}\|g\|_{L^{p,q}}=\|f\|_{Z}\|g\|_Y.
  \end{split}
\end{equation*}
Moreover,
\begin{equation*}
  \begin{split}
    \|\chi_Q\|_{\widetilde{Y}}\sim |Q|^{1/\widetilde{p}}
    =
    |Q|\cdot |Q|^{1/p}
    \sim \|\chi_Q\|_{Z}\|\chi_Q\|_{Y}.
  \end{split}
\end{equation*}
This completes the proof of Lemma 4.1.
\end{proof}

Using Proposition \ref{proposition, structure and technique for Z=L^1} and  Lemma \ref{lemma, auxiliary space, Lorentz}, we immediately obtain the following corollary, which is a great improvement and extension to Theorem C.

\begin{corollary}\label{Cor.4.1}
  Let $b\in L_{loc}^1(\mathbb{R}^n)$, $0\le \alpha<n$, $1<p<n/\alpha$, $1/q=1/p-\alpha/n$. 
  Suppose that $\Omega\in L^{\infty}(\mathbb{S}^{n-1})$ is a nonzero homogeneous function of degree 0
  and satisfies (\ref{mean value zero}) for $\alpha=0$. If there exist some open $E\subset\mathbb{S}^{n-1}$, constants $c$  and $C$ such that
  \begin{equation}\label{l4.1}
  c\leqslant \Omega(x')\leqslant C,\quad \forall \,x'\in E,
  \end{equation}
   where $0<c<C$ or $c<C<0$, then the following statements are equivalent:
\begin{enumerate}[(1)]
  \item $b\in BMO(\mathbb{R}^n)$,
  \item $[b,T_{\alpha}]$ is bounded from $L^p(\mathbb{R}^n)$ to $L^q(\mathbb{R}^n)$.
  \item $[b,T_{\alpha}]$ is bounded from $L^p(\mathbb{R}^n)$ to $L^{q,\infty}(\mathbb{R}^n)$.
\end{enumerate}

\end{corollary}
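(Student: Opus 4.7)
The plan is to establish the three-way equivalence by proving the cycle $(1) \Rightarrow (2) \Rightarrow (3) \Rightarrow (1)$. The first implication is the classical sufficiency statement, and I would invoke the known $L^p \to L^q$ boundedness results for commutators with $BMO$ symbols due to Coifman--Rochberg--Weiss in the singular integral case $\alpha = 0$ and Chanillo in the fractional case $\alpha \in (0,n)$, both of which extend to $\Omega \in L^\infty(\mathbb{S}^{n-1})$ satisfying the mean-value-zero condition when $\alpha = 0$. The implication $(2) \Rightarrow (3)$ is just the continuous embedding $L^q(\mathbb{R}^n) \hookrightarrow L^{q,\infty}(\mathbb{R}^n)$.

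The substantive direction is $(3) \Rightarrow (1)$, and the strategy is to invoke Proposition \ref{proposition, structure and technique for Z=L^1}. I set $X = L^p(\mathbb{R}^n)$, $Y = L^{q,\infty}(\mathbb{R}^n)$, $Z = L^1(\mathbb{R}^n)$, and $\mu(Q) = |Q|$ so that $BMO_\mu = BMO$. To produce the auxiliary quasi-Banach space $\widetilde{Y}$ required by Theorem \ref{theorem, structure, homogeneous kernel}(I), I apply Lemma \ref{lemma, auxiliary space, Lorentz} with Lorentz indices $(q,\infty)$ in place of $(p,q)$, yielding $\widetilde{Y} = L^{r,1}$ with $1/r = 1 + 1/q$, together with both the product embedding $Y \cdot Z \subset \widetilde{Y}$ and the characteristic-function matching $\|\chi_Q\|_{\widetilde{Y}} \sim \|\chi_Q\|_Y \|\chi_Q\|_Z$. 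Condition (II) of Theorem \ref{theorem, structure, homogeneous kernel} for $Z = L^1$ is clear from translation and dilation invariance of the $L^1$ norm.

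The remaining verifications are purely routine. Dilation condition (a) in Theorem \ref{theorem, structure, homogeneous kernel} holds because $\|\chi_{\lambda Q}\|_{L^{q,\infty}} \sim \lambda^{n/q}\|\chi_Q\|_{L^{q,\infty}}$. The quantitative comparison $\|\chi_Q\|_X \lesssim \|\chi_Q\|_Y \mu(Q)|Q|^{\alpha/n - 1}$ collapses to the identity $|Q|^{1/p} = |Q|^{1/q + \alpha/n}$, which is exactly the Sobolev scaling relation $1/q = 1/p - \alpha/n$. The local lower and upper bound on $\Omega$ over the open set $E$ is precisely hypothesis (\ref{l4.1}), and $\Omega \in L^1(E)$ is automatic from $\Omega \in L^\infty(\mathbb{S}^{n-1})$. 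Proposition \ref{proposition, structure and technique for Z=L^1} then delivers $\|b\|_{BMO} \lesssim \|[b,T_\alpha]\|_{L^p \to L^{q,\infty}}$, closing the cycle. The only delicate point is confirming that Lemma \ref{lemma, auxiliary space, Lorentz} remains valid at the endpoint where the second Lorentz index equals $\infty$; the Hölder-type estimate in the $L^q(\mathbb{R}^+, dt/t)$ scale used in its proof is still available at this endpoint, so no genuine obstacle arises.
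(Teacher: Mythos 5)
Your proposal is correct and follows essentially the same route as the paper: the forward implications cite known upper bounds plus the embedding $L^q\hookrightarrow L^{q,\infty}$, and the converse $(3)\Rightarrow(1)$ applies Proposition \ref{proposition, structure and technique for Z=L^1} with $X=L^p$, $Y=L^{q,\infty}$, $Z=L^1$, $\widetilde{Y}=L^{r,1}$ ($1/r=1+1/q$), $\mu(Q)=|Q|$, together with Lemma \ref{lemma, auxiliary space, Lorentz}. Your explicit check of the scaling identity $|Q|^{1/p}=|Q|^{1/q+\alpha/n}$ and of the Lorentz lemma at the second index $\infty$ (which the lemma's hypothesis $0<p,q\le\infty$ already covers) matches, and in fact silently corrects, the paper's apparent typo of writing $Y=L^{p,\infty}$ and $1/\widetilde p=1/p+1$ where $q$ is meant.
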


\begin{proof}
  $(1)\Rightarrow (2)\Rightarrow (3)$ directly follows from \cite[Theorem 1]{Hu-2003} and \cite[Theorem 3.6.1]{LDY} with the fact that $L^q(\mathbb{R}^n)\subset L^{q,\infty}(\mathbb{R}^n)$.

 Next, we verify that $(3)\Rightarrow (1)$.  Take $X=L^p(\mathbb{R}^n)$, $Y=L^{p,\infty}(\mathbb{R}^n)$, $\widetilde{Y}=L^{\tilde{p},1}(\mathbb{R}^n)$
  where $1/\widetilde{p}=1/p+1$, $Z=L^1(\mathbb{R}^n)$,
  and take $\mu(Q)=|Q|$, then the desired conclusion follows immediately from Proposition \ref{proposition, structure and technique for Z=L^1} and Lemma \ref{lemma, auxiliary space, Lorentz}.
\end{proof}

\begin{remark}\quad Notice that $\Omega\in C(\mathbb{S}^{n-1})$ implies (\ref{l4.1}) holds. Therefore, replacing (\ref{l4.1}) by that $\Omega\in C(\mathbb{S}^{n-1})$,  Corollary \ref{Cor.4.1} is also true, which can be regarded as an essential improvement of \cite{WZT1}.

\end{remark}


For the endpoint case $p=1$, we can obtain the converse result of (ii) in Theorem C, and get the characterization theorem of $BMO$ via the weak-$L\log^+L$ type boundedness of $[b,T_0]$, which is new characterization of $BMO$ space and the proof will be presented in the next subsection for more general weighted cases.

\begin{corollary}\label{cor.4.2}
Let $\Phi(t)=t(1+\log^+t)$. Suppose $\Omega\in {\rm Lip}(\mathbb{S}^{n-1})$ and $b\in L_{\rm loc}^1(\mathbb{R}^n)$. Then
$$b\in BMO(\mathbb{R}^n)\,\Longleftrightarrow\, |\{x\in \mathbb{R}^n: |[b,T_0](f)(x)|>\lambda\}|\lesssim\displaystyle\int_{\mathbb{R}^n}\Phi\left(\frac{|f(y)|}{\lambda}\right)dy,\,\forall\,\lambda>0.$$
\end{corollary}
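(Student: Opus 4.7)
The forward direction is the classical theorem of P\'erez \cite{Perez-1995}, which is Theorem C(ii) above, so the work lies in proving the reverse direction. Crucially, the weak-$L\log L$ estimate is not a quasi-Banach boundedness assertion in any standard sense, so Theorem \ref{theorem, structure, homogeneous kernel} does not apply directly; instead I would invoke the pointwise estimate from Proposition \ref{proposition, technique, homogeneous kernel, pointwise}.

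First I would verify the hypotheses of Proposition \ref{proposition, technique, homogeneous kernel, pointwise} for the given $\Omega$. Since $\Omega\in{\rm Lip}(\mathbb{S}^{n-1})$ is nonzero, homogeneous of degree zero, and satisfies \eqref{mean value zero}, continuity forces the existence of a nonempty open set $E\subset\mathbb{S}^{n-1}$ on which $\Omega$ is bounded above and below by constants of the same sign (condition (i)). Moreover, Lipschitz continuity guarantees that every point of $\mathbb{S}^{n-1}$ is a Lebesgue point of $\Omega$ uniformly on any compact subset, so Proposition \ref{proposition, technique, homogeneous kernel, Z=L^infty} supplies condition (ii) with $Z=L^\infty$.

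Applying Proposition \ref{proposition, technique, homogeneous kernel, pointwise} with $\alpha=0$, for every cube $Q_1$ there exist functions $\phi,\psi$ with $|\phi|,|\psi|\leq 2\chi_{Q_1}$ and a cube $Q$ of the same side length as $Q_1$, with $Q_1\subset\lambda Q$ for some $\lambda$ independent of $Q_1$, such that, writing $B:=|Q_1|^{-1}\int_{Q_1}|b(y)-b_{Q_1}|dy$,
\begin{equation*}
  B\leq \widetilde{C}\bigl(|[b,T_0](\phi)(x)|+|[b,T_0](\psi)(x)|\bigr),\qquad\forall\,x\in Q.
\end{equation*}
Consequently $Q\subset\{|[b,T_0]\phi|\geq B/(4\widetilde C)\}\cup\{|[b,T_0]\psi|\geq B/(4\widetilde C)\}$. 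Invoking the assumed weak-$L\log L$ bound with threshold $\lambda=B/(4\widetilde C)$, and noting $\Phi$ is increasing with $|\phi|,|\psi|\leq 2\chi_{Q_1}$, each level set has measure at most a constant multiple of $|Q_1|\Phi(8\widetilde C/B)$. Since $|Q|=|Q_1|$, this yields $1\lesssim\Phi(C_0/B)$ with $C_0:=8\widetilde C$.

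The last step is the case analysis on $\Phi$: recall $\Phi(t)=t$ for $t\le 1$ and $\Phi(t)=t(1+\log t)$ for $t>1$. If $B\geq C_0$ then $C_0/B\leq 1$, forcing $1\lesssim C_0/B$ and hence $B\lesssim C_0$; if $B<C_0$ then $B$ is already controlled. In either case $B$ is bounded by a constant multiple of the weak-$L\log L$ operator constant of $[b,T_0]$, uniformly in $Q_1$, giving $b\in BMO(\mathbb{R}^n)$. The main delicate point I expect is the endpoint step: one must ensure that when $\phi$ and $\psi$ are plugged into the Orlicz-type inequality, the resulting Orlicz integrals really produce the clean bound $|Q_1|\Phi(C_0/B)$ without a hidden $\log$ factor that would obstruct the case analysis; this requires the bound $|\phi|,|\psi|\leq 2\chi_{Q_1}$ from Proposition \ref{proposition, technique, homogeneous kernel, pointwise} and the monotonicity of $\Phi$ to be used carefully together.
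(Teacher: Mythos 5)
Your proposal is correct and follows essentially the same route as the paper: the paper proves this corollary as the special case $\omega\equiv 1$ of Corollary \ref{corollary,weak-type-endpoint}, using Theorem F (P\'erez) for the forward direction and, for the converse, Proposition \ref{proposition, technique, homogeneous kernel, Z=L^infty} to verify the kernel hypothesis, Proposition \ref{proposition, technique, homogeneous kernel, pointwise} for the pointwise lower bound, and then the same level-set argument with threshold comparable to $B/\widetilde C$ leading to $1\lesssim\Phi(C_0/B)$. Your observation that $|Q|=|Q_1|$ makes the unweighted case slightly simpler is accurate (the paper's weighted proof needs the doubling of $\omega$ at that step), but this is a specialization, not a different method.
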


\subsubsection{$BMO$ and commutators in one-weight setting}

\medskip

This subsection is concerned with the weighted boundedness of $[b,T_\alpha]$. A lot of attentions has been paid on this topic. We list several relevant results as follows.

\medskip

{\bf Theorem D} (cf. \cite[Theorems 2.4.4 and 3.6.1]{LDY}).\quad{\it Let $0\le \alpha<n$, $1<p\le q<\infty$ with $1/q=1/p-\alpha/n$, $T_{\alpha}$ be the integral operator associated with $\Omega$ and $\alpha$. Suppose $\Omega\in L^\infty(\mathbb{S}^{n-1})$ and satisfies (\ref{mean value zero}) for $\alpha=0$, $\omega\in A_{p,q}$. Then
$$b\in BMO(\mathbb{R}^n)\,\Rightarrow \, [b,T_{\alpha}]: \, L^p(\omega^p)\rightarrow\, L^q(\omega^{q}).$$
}

{\bf Theorem E} (cf. \cite{Chaf-Cruz-2017}).\quad{\it (i) Let $1<p<\infty$, $T_0$ be the singular integral operator associated to $\Omega$ with the homogenous of degree $0$. Suppose $\Omega\in C^\infty(\mathbb{S}^{n-1})$ satisfying (\ref{mean value zero}), $b\in \bigcup_{q>}L_{\rm loc}^q(\mathbb{R}^n)$, $\omega\in A_p$. Then
$$ [b,T_0]: \, L^p(\omega)\rightarrow\, L^p(\omega)\,\Rightarrow \,b\in BMO(\mathbb{R}^n).$$

(ii)\quad Let $0<\alpha<n$, $1<p<q<\infty$ with $1/q=1/p-\alpha/n$, and $I_\alpha$ be the Riesz potential in $\mathbb{R}^n$. Suppose that $\omega\in A_{p,q}$ and $b\in \bigcup_{q>}L_{\rm loc}^q(\mathbb{R}^n)$. Then
$$ [b,I_\alpha]: \, L^p(\omega^p)\rightarrow\, L^p(\omega^{q})\,\Rightarrow \,b\in BMO(\mathbb{R}^n).$$
}
Applying our general theorems in Section 2, we can deduce the following result.

\medskip

\begin{corollary}\label{corollary,one-weight}
  Let $0\leq \alpha<n$, $1<p\le q <\infty$ with $1/q=1/p-\alpha/n$, $\omega^p$ be a doubling weight, and $T_{\alpha}$ be the integral operator associated to the kernel $\Omega$ with nonzero homogeneous function of degree 0 and satisfying (\ref{mean value zero}) for $\alpha=0$. Suppose that there exists an open subset $E\subset\mathbb{S}^{n-1}$ such that
  \begin{equation}\label{one-weight}
    \lim_{r\rightarrow 0}\frac{1}{\sigma(B(x',r)\cap \mathbb{S}^{n-1})}\int_{B(x',r)\cap \mathbb{S}^{n-1}}|\Omega(z')-\Omega(x')|d\sigma(z')=0
  \end{equation}
  uniformly for all $x'\in E$.
For $b\in L_{\rm loc}^1(\mathbb{R}^n)$, if the commutator $[b,T_{\alpha}]$ is bounded from $L^p(\omega^p)$ to $L^q(\omega^{q})$, then $b\in BMO(\mathbb{R}^n)$.
\end{corollary}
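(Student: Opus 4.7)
The strategy is to apply Theorem~\ref{theorem, structure, homogeneous kernel} with the concrete choices $X=L^p(\omega^p)$, $Y=L^q(\omega^q)$, $\widetilde Y=Y$, $Z=L^\infty(\mathbb{R}^n)$, and $\mu(Q)=|Q|$. With these choices, the abstract hypotheses (I)--(II) of Theorem~\ref{theorem, structure, homogeneous kernel} are immediate: since $\|\chi_Q\|_{L^\infty}=1$, we have $Y\cdot L^\infty\subset Y=\widetilde Y$ with $\|\chi_Q\|_{\widetilde Y}=\|\chi_Q\|_Y\|\chi_Q\|_Z$, and $L^\infty$ is invariant under all affine changes of variable.

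Next I would verify the norm domination $\|\chi_Q\|_X\lesssim\|\chi_Q\|_Y\mu(Q)|Q|^{\alpha/n-1}$. Writing it out and using $1/p-1/q=\alpha/n$, it is equivalent to
\[
\Bigl(\frac{1}{|Q|}\int_Q\omega^p\,dx\Bigr)^{1/p}\le\Bigl(\frac{1}{|Q|}\int_Q\omega^q\,dx\Bigr)^{1/q},
\]
which is just the power mean inequality on the probability space $(Q,dx/|Q|)$ for $p\le q$; no $A_{p,q}$ assumption is needed. Condition (b) of Theorem~\ref{theorem, structure, homogeneous kernel} is immediate as well: the doubling of $\omega^p$ yields $\|\chi_{\lambda Q}\|_X\lesssim_\lambda\|\chi_Q\|_X$, while $\mu(\lambda Q)=\lambda^n\mu(Q)$ is trivial.

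It remains to verify the two local conditions on $\Omega$ in Theorem~\ref{theorem, structure, homogeneous kernel}. Condition (2), with $Z=L^\infty$, is precisely the conclusion of Proposition~\ref{proposition, technique, homogeneous kernel, Z=L^infty} applied to any precompact open $F\subset E$ with $\overline F\cap E^c=\emptyset$. Condition (1) is the subtle point: it requires an open subset of $\mathbb{S}^{n-1}$ on which $\Omega$ is bounded between two constants of the same nonzero sign. Since $\Omega$ is nonzero in $L^\infty(\mathbb{S}^{n-1})$, I would first pick a Lebesgue point $x_0'\in E$ where $d:=\Omega(x_0')$ is nonzero (modifying $\Omega$ on a null set if necessary, which does not affect $T_\alpha$). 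Then, using the uniform Lebesgue point hypothesis together with the $L^\infty$ bound and a standard comparison of averages over slightly shifted small balls, one checks that the Lebesgue representative of $\Omega$ is actually continuous on $E$; hence $\Omega$ lies strictly between $d/2$ and $3d/2$ on an open neighborhood $E'\subset E$ of $x_0'$, which supplies the set required by condition (1).

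With every hypothesis of Theorem~\ref{theorem, structure, homogeneous kernel} verified, the conclusion $\|b\|_{BMO_\mu}\lesssim\|[b,T_\alpha]\|_{L^p(\omega^p)\to L^q(\omega^q)}$ follows, and $BMO_\mu=BMO(\mathbb{R}^n)$ because $\mu(Q)=|Q|$. The main obstacle is the verification of condition (1), namely extracting a pointwise definite-sign bound for $\Omega$ on some open subset from the purely averaged uniform Lebesgue hypothesis on $E$; the remaining verifications are essentially bookkeeping.
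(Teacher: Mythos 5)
Your proposal follows exactly the paper's route: the same choices $X=L^p(\omega^p)$, $Y=\widetilde Y=L^q(\omega^q)$, $Z=L^\infty$, $\mu(Q)=|Q|$, the same H\"older/power-mean verification of the norm condition, and the same appeal to Theorem~\ref{theorem, structure, homogeneous kernel} together with Proposition~\ref{proposition, technique, homogeneous kernel, Z=L^infty}. Your extra paragraph justifying the sign/boundedness condition (1) on $\Omega$ (via uniform convergence of the spherical averages, hence continuity of $\Omega$ on $E$) is a point the paper silently omits, so your write-up is, if anything, slightly more complete.
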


\begin{proof}
  Take $X=L^p(\omega^p)$, $Y=\widetilde{Y}=L^q(\omega^{q})$, $Z=L^{\infty}$, $\mu(Q)=|Q|$.
  Using H\"{o}lder's inequality, we have
  \begin{equation*}
    \left(\int_{Q}\omega^p(x)dx\right)^{1/p}
    \leq |Q|^{\alpha/n}\left(\int_{Q}\omega(x)^{q}dx\right)^{1/q}.
  \end{equation*}
  Thus, $\|\chi_{Q}\|_X\lesssim \|\chi_{Q}\|_Y\mu(Q)|Q|^{\alpha/n-1}$ for all cubes $Q$.
  The final conclusion follows immediately from
  Theorem \ref{theorem, structure, homogeneous kernel} and Proposition \ref{proposition, technique, homogeneous kernel, Z=L^infty}.
\end{proof}

\medskip

Note that $C(\mathbb{S}^{n-1})\subset L^\infty(\mathbb{S}^{n-1})$ and if $\Omega\in C(\mathbb{S}^{n-1})$, then $\Omega$ satisfies (\ref{one-weight}).
Invoking Theorem D and Corollary \ref{corollary,one-weight}, we obtain the following characterization theorem.

\begin{corollary}
Let $0\le \alpha<n$, $1<p\le q<\infty$ with $1/q=1/p-\alpha/n$, $T_{\alpha}$ be the integral operator associated to $\Omega$ with the homogenous of degree $0$. Suppose $\Omega\in C(\mathbb{S}^{n-1})$ and satisfies (\ref{mean value zero}) for $\alpha=0$, $\omega^{1/p}\in A_{p,q}$. Then
$$b\in BMO(\mathbb{R}^n)\,\Leftrightarrow \, [b,T_{\alpha}]: \, L^p(\omega)\rightarrow\, L^q(\omega^{q/p}).$$
\end{corollary}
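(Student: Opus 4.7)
The plan is to reduce this two-sided characterization to Theorem D (sufficiency) and Corollary \ref{corollary,one-weight} (necessity), connected by a simple renaming of the weight. Set $w := \omega^{1/p}$, so that the hypothesis $\omega^{1/p} \in A_{p,q}$ reads $w \in A_{p,q}$, and the weighted spaces in the conclusion become $L^p(\omega) = L^p(w^p)$ and $L^q(\omega^{q/p}) = L^q(w^q)$.

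For the sufficiency direction, since $\Omega \in C(\mathbb{S}^{n-1}) \subset L^{\infty}(\mathbb{S}^{n-1})$ and $w \in A_{p,q}$, Theorem D applies verbatim and yields $[b, T_{\alpha}] \colon L^p(w^p) \to L^q(w^q)$ whenever $b \in BMO(\mathbb{R}^n)$, which is exactly the stated mapping property.

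For the necessity direction, I would apply Corollary \ref{corollary,one-weight} with $w$ playing the role of its $\omega$. Three items must be verified. First, $w^p$ must be doubling; this is a standard consequence of $w \in A_{p,q}$, since combining the $A_{p,q}$ inequality with H\"older gives $\bigl(\frac{1}{|Q|}\int_Q w^p\bigr)^{1/p} \sim \bigl(\frac{1}{|Q|}\int_Q w^q\bigr)^{1/q}$, and the right-hand side is doubling because $w^q \in A_{1+q/p'} \subset A_{\infty}$. Second, $\Omega$ must be a nonzero homogeneous function of degree zero with vanishing mean when $\alpha = 0$, which is given. Third, the uniform Lebesgue-point condition (\ref{one-weight}) must hold on some open $E \subset \mathbb{S}^{n-1}$; since $\mathbb{S}^{n-1}$ is compact and $\Omega$ is continuous, $\Omega$ is uniformly continuous, so (\ref{one-weight}) holds uniformly on all of $\mathbb{S}^{n-1}$, and one may take $E$ to be any open neighborhood of a point $x_0'$ with $\Omega(x_0') \neq 0$ (which exists because $\Omega$ is nonzero).

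With these items confirmed, Corollary \ref{corollary,one-weight} yields $b \in BMO(\mathbb{R}^n)$, completing the characterization. The argument is essentially bookkeeping, since all of the genuine analytic content has been packaged into Theorem D and Corollary \ref{corollary,one-weight}; the only minor technical point is the doubling of $w^p$, which is routine in $A_{p,q}$ theory and does not present a serious obstacle.
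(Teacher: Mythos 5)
Your proposal is correct and matches the paper's own (implicit) argument exactly: the paper derives this corollary by simply invoking Theorem D for sufficiency and Corollary \ref{corollary,one-weight} for necessity, noting that $\Omega\in C(\mathbb{S}^{n-1})$ implies the uniform Lebesgue-point condition (\ref{one-weight}). Your additional verification that $w^p=\omega$ is doubling (via the comparability of the $L^p$ and $L^q$ averages under $A_{p,q}$) is a detail the paper leaves unstated, and it is carried out correctly.
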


Moreover, the following known weak-type endpoint estimate was established by Perez in 1995 (see \cite[Theorem 8.1]{Perez-1995}, and also \cite[Corollary 1]{Ort}).

\medskip

{\bf Theorem F} (\cite{Perez-1995,Ort}).\quad{\it Let $T_0$ be the singular integral operator associated with $\Omega$
satisfying (\ref{mean value zero}).
Suppose $\Omega\in {\rm Lip}_1(\mathbb{S}^{n-1})$, $\omega\in A_1$, $\Phi(t)=t(1+\log^+t)$. Then
$$b\in BMO(\mathbb{R}^n)\,\Longrightarrow\,\omega(\{x\in\mathbb{R}^n:\,|[b,T_0](f)(x)|>\lambda\})\lesssim \int_{\mathbb{R}^n}\Phi(\frac{|f(y)|}{\lambda})\omega(y)dy,\quad \forall\, \lambda>0.$$
}

\medskip

Applying Propositions 2.2 and 2.3, we can establish the inverse direct result, and then obtain the following characterized theorem, which is new even for $\omega\equiv 1$ (see Corollary \ref{cor.4.2}).

\begin{corollary}\label{corollary,weak-type-endpoint}
Let $1<p<\infty$, $\Phi(t)=t(1+\log^+t)$, $\omega\in A_1$. Suppose $\Omega\in {\rm Lip}(\mathbb{S}^{n-1})$ and $b\in L_{\rm loc}^1(\mathbb{R}^n)$. Then the following two statements are equivalent:
\begin{enumerate}[(1)]
  \item $b\in BMO(\mathbb{R}^n)$.
  \item  $\omega(\{x\in \mathbb{R}^n: |[b,T_0](f)(x)|>\lambda\})\le A\displaystyle\int_{\mathbb{R}^n}\Phi\left(\frac{|f(y)|}{\lambda}\right)\omega(y)dy$
      \quad $\forall\,\lambda>0$,  where $A$ is a positive constant independent of $f$ and $\lambda$.
\end{enumerate}
\end{corollary}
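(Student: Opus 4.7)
The direction $(1)\Rightarrow(2)$ is exactly Theorem F, so the task is to prove $(2)\Rightarrow(1)$. My plan is to feed the weak-type Orlicz estimate in (2) into the pointwise lower bound furnished by Proposition \ref{proposition, technique, homogeneous kernel, pointwise} and extract from it a uniform control on the mean oscillation of $b$. To invoke that proposition for $T_{0}$, I first need to verify its two hypotheses on $\Omega$. Since $\Omega\in\mathrm{Lip}(\mathbb{S}^{n-1})$ is nonzero and has mean value zero, it takes both strictly positive and strictly negative values, and by continuity there is an open $E\subset\mathbb{S}^{n-1}$ on which $0<c\leq\Omega\leq C$ (pick the sign one needs). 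Lipschitz regularity trivially supplies the uniform Lebesgue point condition on $E$ in the sense of Proposition \ref{proposition, technique, homogeneous kernel, Z=L^infty}, which in turn delivers the $L^{\infty}$-convergence required by Proposition \ref{proposition, technique, homogeneous kernel, pointwise}.

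Having verified those hypotheses, I fix an arbitrary cube $Q_{1}$ and set $M:=|Q_{1}|^{-1}\int_{Q_{1}}|b(y)-b_{Q_{1}}|\,dy$. Proposition \ref{proposition, technique, homogeneous kernel, pointwise} (applied with $\alpha=0$) produces functions $\phi,\psi$ with $|\phi|,|\psi|\leq 2\chi_{Q_{1}}$, a constant $\widetilde{C}$, a universal $\lambda_{0}>0$, and a cube $Q$ of the same side length as $Q_{1}$ with $Q_{1}\subset\lambda_{0}Q$, such that
\begin{equation*}
M\leq \widetilde{C}\bigl(|[b,T_{0}]\phi(x)|+|[b,T_{0}]\psi(x)|\bigr)\quad\text{for every }x\in Q.
\end{equation*}
Choosing the threshold $\lambda:=M/(4\widetilde{C})$, at every $x\in Q$ at least one of the two commutator values exceeds $\lambda$, so
\begin{equation*}
Q\subset\{x:|[b,T_{0}]\phi(x)|>\lambda\}\cup\{x:|[b,T_{0}]\psi(x)|>\lambda\}.
\end{equation*}

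Now I apply hypothesis (2) to $\phi$ and to $\psi$ and use $\Phi(|\phi|/\lambda)\leq\Phi(2/\lambda)\chi_{Q_{1}}$ (similarly for $\psi$):
\begin{equation*}
\omega(Q)\leq 2A\,\Phi(2/\lambda)\,\omega(Q_{1})=2A\,\Phi(8\widetilde{C}/M)\,\omega(Q_{1}).
\end{equation*}
Because $\omega\in A_{1}$ is doubling and $Q,Q_{1}$ are equally sized cubes with centres at distance at most $\rho|h_{\mathbf{l_{0}}}|$ (so both lie in a common dilate of either cube with a universal dilation factor), one has $\omega(Q_{1})\lesssim\omega(Q)$ with a constant depending only on $h_{\mathbf{l_{0}}}$. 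Dividing through by $\omega(Q_{1})$ yields $\Phi(8\widetilde{C}/M)\gtrsim 1$, and since $\Phi$ is increasing with $\Phi(t)=t$ on $(0,1]$, this forces $8\widetilde{C}/M$ to be bounded below, i.e.\ $M\lesssim 1$ uniformly in $Q_{1}$. This gives $b\in\mathrm{BMO}(\mathbb{R}^{n})$.

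The heavy machinery is already packaged inside Propositions \ref{proposition, technique, homogeneous kernel, pointwise} and \ref{proposition, technique, homogeneous kernel, Z=L^infty}, so the proof is essentially bookkeeping once those apply. The point of genuine care is the interplay between the Young function $\Phi$ and the threshold $\lambda=M/(4\widetilde{C})$: the \emph{linear} behaviour $\Phi(t)=t$ for small $t$ is precisely what permits turning the lower bound $\Phi(8\widetilde{C}/M)\gtrsim 1$ into an \emph{upper} bound on $M$. If $\Phi$ grew strictly sublinearly near $0$, the same scheme would leave $M$ unbounded. A second subtlety, which is however automatic here, is that the doubling constant used to compare $\omega(Q)$ and $\omega(Q_{1})$ is independent of $Q_{1}$, because the vector $h_{\mathbf{l_{0}}}$ supplied by Proposition \ref{proposition, technique, homogeneous kernel, pointwise} is chosen once and for all depending only on $\Omega$.
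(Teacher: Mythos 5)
Your proposal is correct and follows essentially the same route as the paper: both directions are handled identically, with $(1)\Rightarrow(2)$ quoted from Theorem F and $(2)\Rightarrow(1)$ obtained by combining Proposition \ref{proposition, technique, homogeneous kernel, Z=L^infty} (to verify the kernel hypotheses from the Lipschitz regularity of $\Omega$) with the pointwise lower bound of Proposition \ref{proposition, technique, homogeneous kernel, pointwise}, the same choice of threshold $\lambda\sim M/\widetilde{C}$, the bound $\Phi(|\phi|/\lambda)\le\Phi(2/\lambda)\chi_{Q_1}$, and the $A_1$ doubling comparison between $\omega(Q)$ and $\omega(Q_1)$. Your closing remark on why the linear behaviour of $\Phi$ near $0$ converts the lower bound $\Phi(8\widetilde{C}/M)\gtrsim 1$ into $M\lesssim 1$ makes explicit a step the paper leaves implicit, but the argument is the same.
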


\begin{proof} By Theorem F, we need only to verify $(2)\Rightarrow (1)$.  Using the assumption $\Omega\in {\rm Lip}(\mathbb{S}^{n-1})$,
we have
  \begin{equation*}
    \lim_{r\rightarrow 0}\frac{1}{\sigma(B(x',r)\cap \mathbb{S}^{n-1})}\int_{B(x',r)\cap \mathbb{S}^{n-1}}|\Omega(z')-\Omega(x')|d\sigma(z')=0
  \end{equation*}
  uniformly for all $x'\in \mathbb{S}^{n-1}$.
Then Proposition \ref{proposition, technique, homogeneous kernel, Z=L^infty} implies that
  \begin{equation}\label{for proof, application, 1}
    \lim\limits_{\substack{|h|\rightarrow \infty}}
     \left\|\int_{Q_0}|\Omega(\cdot-y+h)-\Omega(\cdot+h)|dy\right\|_{L^{\infty}(Q_0)}\rightarrow 0.
  \end{equation}
Again, since $\Omega\in {\rm Lip}(\mathbb{S}^{n-1})$,  without loss of generality we assume that
$c\leqslant \Omega(x')\leqslant C$ for some
  open set $E\subset \mathbb{S}^{n-1}$ where $c<C<0$.
Then, invoking Proposition \ref{proposition, technique, homogeneous kernel, pointwise} for $\alpha=0$,
we deduce that for any cube $Q_1\subset \mathbb{R}^n$, there exist two functions $\phi$ and $\psi$ satisfying
  $|\phi|,|\psi|\leq 2\chi_{Q_1}$, and a cube $Q$ with same side length of $Q_1$ and $Q_1\subset \widetilde{\lambda} Q$ for some $\widetilde{\lambda}>0$
  independent of $Q_1$, such that
  \begin{equation*}
 B:=\frac{1}{|Q_1|}\int_{Q_1}|b(y)-b_{Q_1}|dy
 \leq \bar{C}
 \left(|[b,T](\phi)(x)|+|[b,T](\psi)(x)|\right)\ \text{for\ all\ }x\in Q.
\end{equation*}
Take $\lambda=\frac{\bar{C}^{-1}B}{4}$. Then
  \begin{equation*}
  \begin{split}
    \omega(Q)= &\omega(\{x\in Q: \bar{C}^{-1}B>2\lambda\})
        \leqslant  \omega(\{x\in Q: |[b,T](\phi)(x)|+|[b,T](\psi)(x)|>2\lambda\})
    \\
    \leqslant &
    \omega(\{x\in Q: |[b,T](\phi)(x)|>\lambda\})
    +
    \omega(\{x\in Q: |[b,T](\psi)(x)|>\lambda\}).
  \end{split}
  \end{equation*}
  Invoking (2) yields that
  \begin{equation*}
    \begin{split}
      \omega(\{x\in Q: |[b,T](\phi)(x)|>\lambda\}) \leqslant
      A\int_{\mathbb{R}^n}\Phi\left(\frac{|\phi(y)|}{\lambda}\right)\omega(y)dy
          \leqslant
      A\int_{Q_1}\Phi(\frac{2}{\lambda})\omega(y)dy
      =
      A\,\Phi(\frac{2}{\lambda})\omega(Q_1).
    \end{split}
  \end{equation*}
  Similarly,
  \begin{equation*}
      \omega(\{x\in Q: |[b,T](\psi)(x)|>\lambda\})
      \leqslant
      A\,\Phi(\frac{2}{\lambda})\omega(Q_1).
  \end{equation*}
Combining with the above estimates, we conclude that
\begin{equation*}
  \begin{split}
    \omega(Q) \leq 2A\,\Phi(\frac{2}{\lambda})\omega(Q_1)
          = 2A\,\Phi(\frac{8\bar{C}}{B})\omega(Q_1).
  \end{split}
\end{equation*}
Note that $Q_1\subset \widetilde{\lambda} Q$ and the doubling property of $\omega\in A_1$, we get
\begin{equation*}
  \begin{split}
    \omega(Q) \lesssim 2A\,\Phi(\frac{8\bar{C}}{B})\omega(Q_1)
    \leq 2A\,\Phi(\frac{8\bar{C}}{B})\omega( \widetilde{\lambda}Q)\lesssim A\,\Phi(\frac{8\bar{C}}{B})\omega(Q).
  \end{split}
\end{equation*}
This implies
\begin{equation*}
  B\lesssim \max\{1, A\},
\end{equation*}
 and complete the proof of Corollary \ref{corollary,weak-type-endpoint}.
 \end{proof}

\subsubsection{Weighted $BMO$ and commutators in two-weight setting}

\medskip

In 1985, Bloom \cite{Bl} gave the characterization of $BMO_\mu$, the weighted BMO space, via the $(L^p(\omega),\, L^p(\lambda))$-boundedness of $[b, H]$, where $H$ is the Hilbert transform, $\omega,\,\lambda\in A_p$, $\mu=(\omega/\lambda)^{1/p}$, $1<p<\infty$. Recently, for the commutators of singular integrals in higher dimension, Holmes et al. \cite{Holmes-Lacey-Wick_MathA_2017} obtained the following results.

\medskip

{\bf Theorem G} (\cite{Holmes-Lacey-Wick_MathA_2017}). Let $1<p<\infty$, $\omega,\,\lambda\in A_p$, $\mu=(\omega/\lambda)^{1/p}$. Then,

\medskip

(i)\quad for the $j$-th Riesz transform $R_j$ in $\mathbb{R}^n$,
   $$b\in BMO_\mu\,\Leftrightarrow \,[b,R_j]:\, L^p(\omega)\rightarrow\, L^p(\lambda),\quad\text{for some}\, j=1,\cdots,n;$$

(ii)\quad  for the general Calderon-Zygmund operator $T$ in $\mathbb{R}^n$,
 $$b\in BMO_\mu\,\Rightarrow\, [b, T]:\, L^p(\omega)\rightarrow\, L^p(\lambda).$$

\medskip

Invoking Theorem 2.1 and Proposition 2.3, we generalize and complete the above results as follows.

\begin{corollary}\label{cor.4.8}
  Let $1<p<\infty$, $\omega,\lambda\in A_p$ $\mu=(\omega/\lambda)^{1/p}$, and $T_0$ be the singular integral operator
  associated with $\Omega$. Suppose $\Omega\in C(\mathbb{S}^{n-1})$ satisfying (\ref{mean value zero}).
  Then the following two statements are equivalent.
\begin{enumerate}[(1)]
  \item $b\in BMO_{\mu}$.
  \item $[b,T_{0}]$ is bounded from $L^p(\omega)$ to $L^p(\lambda)$.
\end{enumerate}
Moreover, we have $\|b\|_{BMO_{\mu}}\sim \|[b,T_0]\|_{L^p(\omega)\rightarrow L^p(\lambda)}$.
\end{corollary}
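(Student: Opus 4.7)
The implication $(1)\Rightarrow(2)$ follows from the known upper-bound theory for commutators of singular integrals with continuous homogeneous kernels in two-weight Bloom-type settings, which extends Theorem G(ii) and Theorem D to kernels with $\Omega\in C(\mathbb{S}^{n-1})$ by a standard approximation argument. My focus is the converse $(2)\Rightarrow(1)$, which I plan to derive as a direct application of Theorem \ref{theorem, structure, homogeneous kernel}. Choose
\[
X=L^p(\omega),\quad Y=\widetilde{Y}=L^p(\lambda),\quad Z=L^\infty(\mathbb{R}^n),\quad \mu(Q)=\int_Q\bigl(\omega(x)/\lambda(x)\bigr)^{1/p}dx.
\]
Conditions (I) and (II) of Theorem \ref{theorem, structure, homogeneous kernel} are immediate: multiplication by $L^\infty$ preserves $L^p(\lambda)$, the identity $\|\chi_Q\|_{L^\infty}\sim 1$ matches the indicator-norm relation, and the dilation invariance on $Z=L^\infty$ is trivial. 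Condition (a) is satisfied since $\lambda\in A_p\subset A_\infty$ is doubling.

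The main quantitative step is the key indicator-norm inequality
\[
\omega(Q)^{1/p}\lesssim\lambda(Q)^{1/p}\,\frac{\mu(Q)}{|Q|},
\]
equivalently $(\langle\omega\rangle_Q/\langle\lambda\rangle_Q)^{1/p}\lesssim\langle(\omega/\lambda)^{1/p}\rangle_Q$, where $\langle\cdot\rangle_Q$ denotes the Lebesgue average over $Q$. My plan is to invoke Hru\v{s}\v{c}ev's characterization of $A_\infty$: for any $w\in A_\infty$, $\langle w\rangle_Q\sim\exp(\langle\log w\rangle_Q)$. Applying this to both $\omega$ and $\lambda$ and then using Jensen's inequality for the exponential yields
\[
\Big(\frac{\langle\omega\rangle_Q}{\langle\lambda\rangle_Q}\Big)^{1/p}\sim\exp\bigl(\langle\log(\omega/\lambda)^{1/p}\rangle_Q\bigr)\leq\langle(\omega/\lambda)^{1/p}\rangle_Q,
\]
which is precisely the desired bound.

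It remains to verify the structural assumptions (1) and (2) on $\Omega$. Since $\Omega\in C(\mathbb{S}^{n-1})$ is nontrivial with $\int_{\mathbb{S}^{n-1}}\Omega\,d\sigma=0$, $\Omega$ must attain values of definite sign, and continuity produces a nonempty open $E\subset\mathbb{S}^{n-1}$ on which $0<c\le\Omega\le C$ (or $c\le\Omega\le C<0$); this gives (1). Continuity also supplies the uniform Lebesgue-point property of $\Omega$ on $E$, so Proposition \ref{proposition, technique, homogeneous kernel, Z=L^infty} produces, for any open $F\subset E$ with $\overline{F}\cap E^c=\emptyset$, a sequence $\{h_l\}$ with $h_l'\in F$, $|h_l|\to\infty$, satisfying
\[
\lim_{l\to\infty}\Big\|\int_{Q_0}|\Omega(\cdot-y+h_l)-\Omega(\cdot+h_l)|\,dy\Big\|_{L^\infty(Q_0)}=0,
\]
which gives (2). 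Theorem \ref{theorem, structure, homogeneous kernel} then delivers $b\in BMO_\mu$ with $\|b\|_{BMO_\mu}\lesssim\|[b,T_0]\|_{L^p(\omega)\to L^p(\lambda)}$; combining with the upper-bound direction yields the claimed equivalence of norms.

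The principal obstacle is the quantitative indicator-norm inequality above: it requires genuine $A_\infty$ structure of both weights (beyond mere doubling) in order to compare the Lebesgue averages of $\omega$, $\lambda$, and $(\omega/\lambda)^{1/p}$, and is precisely the step linking the paper's general structural criterion to the Bloom-type two-weight characterization.
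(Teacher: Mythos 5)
Your proposal is correct, and its overall skeleton coincides with the paper's: both proofs take $X=L^p(\omega)$, $Y=\widetilde{Y}=L^p(\lambda)$, $Z=L^\infty$, $\mu(Q)=\int_Q(\omega/\lambda)^{1/p}dx$, reduce $(2)\Rightarrow(1)$ to Theorem \ref{theorem, structure, homogeneous kernel} together with Proposition \ref{proposition, technique, homogeneous kernel, Z=L^infty}, and handle the kernel hypotheses exactly as you do (continuity plus mean value zero gives the sign condition on an open set and the uniform Lebesgue point property). The only genuine divergence is in the proof of the key indicator inequality $|Q|\,\omega(Q)^{1/p}\lesssim\lambda(Q)^{1/p}\mu(Q)$. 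The paper proves it by a three-exponent H\"older argument: writing $1=\frac{1}{A/p}+\frac1A+\frac{1}{(p'-1)A}$ with $A=p+1+\frac{1}{p'-1}$, splitting $|Q|=\int_Q\mu^{p/A}\lambda^{1/A}\omega^{-1/A}$, and then invoking the $A_p$ condition on $\omega$ to control $\big(\langle\omega^{1-p'}\rangle_Q\big)^{1/((p'-1)A)}$ by $\langle\omega\rangle_Q^{-1/A}$. You instead use the exponential characterization of $A_\infty$, $\langle w\rangle_Q\sim\exp(\langle\log w\rangle_Q)$, applied to $\omega$ (only the upper bound $\langle\omega\rangle_Q\lesssim\exp(\langle\log\omega\rangle_Q)$ actually requires $A_\infty$; the bound $\exp(\langle\log\lambda\rangle_Q)\le\langle\lambda\rangle_Q$ is plain Jensen), followed by one more application of Jensen to pass from $\exp(\langle\log(\omega/\lambda)^{1/p}\rangle_Q)$ to $\langle(\omega/\lambda)^{1/p}\rangle_Q$. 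Both are valid; your route is shorter and conceptually cleaner, and makes transparent that only $\omega\in A_\infty$ (plus doubling of $\lambda$ for condition (a)) enters this step, whereas the paper's computation is self-contained, relying on nothing beyond H\"older and the literal $A_p$ definition. For $(1)\Rightarrow(2)$ the paper likewise just cites the known upper-bound theory (the arguments of Ding's work for $\Omega\in L^\infty(\mathbb{S}^{n-1})$), matching your treatment.
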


\begin{remark} We remark that after our results are showed, Lerner, Ombrosi and Rivera-R\'ios \cite{LOR} obtained very recently the same result provided $\Omega$ satisfies the Dini condition in different way. In fact, "$(2)\Longrightarrow (1)$" was given in a more general form in \cite{LOR}
\end{remark}

\begin{proof}
  By the similar arguments to that in  \cite{Ding-1999}, it is not hard to verify that if $\Omega\in L^\infty(\mathbb{S}^{n-1})$ and $b\in BMO_\mu$, then
  $[b,T_{0}]$ is bounded from $L^p(\omega)$ to $L^p(\lambda)$. Noting that $C(\mathbb{S}^{n-1})\subset L^\infty(\mathbb{S}^{n-1})$,
  we get that $(1)\Longrightarrow (2)$.

  To deal with $(2)\Longrightarrow (1)$, we take $\alpha=0$, $X=L^p(\omega)$ $Y=\widetilde{Y}=L^p(\lambda)$, $Z=L^{\infty}$.
  By Theorem \ref{theorem, structure, homogeneous kernel} and Proposition 2.3, we only need to verify
  \begin{equation*}
    \|\chi_Q\|_X\lesssim \|\chi_Q\|_Y\mu(Q)|Q|^{\alpha/n-1}= \|\chi_Q\|_Y\mu(Q)|Q|^{-1},
  \end{equation*}
  which is equivalent with
    \begin{equation}\label{for proof, application, 2}
    |Q|\omega(Q)^{1/p}\lesssim \lambda(Q)^{1/p}\mu(Q).
  \end{equation}
  Denote $A=p+1+\frac{1}{p'-1}$, then $1=\frac{1}{A/p}+\frac{1}{A}+\frac{1}{(p'-1)A}$.
  Recalling $\mu=(\omega/\lambda)^{1/p}$, we use H\"{o}lder's inequality to deduce that
  \begin{equation*}
    \begin{split}
      |Q|
      = &
      \int_{Q}\mu^{p/A}(x)\lambda^{1/A}(x)\omega^{-1/A}(x)dx
      \\
      \lesssim &
      \left(\int_{Q}\mu(x) dx\right)^{p/A}
      \left(\int_{Q}\lambda(x) dx\right)^{1/A}
      \left(\int_{Q}\omega^{1-p'}(x)dx\right)^{\frac{1}{(p'-1)A}}.
      \\
      \sim &
      \mu(Q)^{p/A}\lambda(Q)^{1/A}\left(\frac{1}{|Q|}\int_{Q}\omega^{1-p'}(x)dx\right)^{\frac{1}{(p'-1)A}}|Q|^{\frac{1}{(p'-1)A}}.
    \end{split}
  \end{equation*}
  Since $\omega\in A_p$, we have
  \begin{equation*}
    \left(\frac{1}{|Q|}\int_Q\omega(x)dx\right)\left(\frac{1}{|Q|}\int_Q\omega(x)^{1-p'}dx\right)^{p-1}\lesssim 1,
  \end{equation*}
  which implies that
  \begin{equation*}
    \left(\frac{1}{|Q|}\int_{Q}\omega^{1-p'}(x)dx\right)^{\frac{1}{(p'-1)A}}
    \lesssim
    \left(\frac{1}{|Q|}\int_Q\omega(x)dx\right)^{\frac{1}{(1-p)(p'-1)A}}
    =
    \left(\frac{1}{|Q|}\int_Q\omega(x)dx\right)^{\frac{-1}{A}}.
  \end{equation*}
Consequently,
    \begin{equation*}
    \begin{split}
      |Q|
      \lesssim &
      \mu(Q)^{p/A}\lambda(Q)^{1/A}\left(\frac{1}{|Q|}\int_Q\omega(x)dx\right)^{\frac{-1}{A}}|Q|^{\frac{1}{(p'-1)A}},
    \end{split}
  \end{equation*}
that is,
      \begin{equation*}
    \begin{split}
      |Q|^{p/A}\omega(Q)^{1/A}
      \lesssim &
      \mu(Q)^{p/A}\lambda(Q)^{1/A}.
    \end{split}
  \end{equation*}
 This implies the desired conclusion and completes the proof of Corollary \ref{cor.4.8}.
\end{proof}

On the other hand, for the commutator of fractional integral operators $[b, T_\alpha]$, Ding and Lu \cite{Ding-1999} proved that for $0<\alpha<n$, $1<p<n/\alpha$, $1/q=1/p-\alpha/n$, $\omega,\,\lambda\in A_{p,q} $, $\mu=\omega/\lambda$, if $\Omega\in L^{\infty}(\mathbb{S}^{n-1})$, then
\begin{equation}\label{l4.2.2}
b\in {\rm BMO}_\mu\,\Longrightarrow\, [b,T_\alpha]:\,L^p(\omega^p)\to L^q(\lambda^q).
\end{equation}
In particular, for the commutators of Riesz potential, Holmes et al. \cite{HLS} recently showed that
\begin{equation}\label{l4.2.3}
b\in {\rm BMO}_\mu\,\Longleftrightarrow\, [b,I_\alpha]:\,L^p(\omega^p)\to L^q(\lambda^q).
\end{equation}

Invoking Theorem 2.1 and Proposition 2.3 again, by (\ref{l4.2.2}) and the similar arguments to in the proof of Corollary \ref{cor.4.8}, we can get the following result, which essentially improve and extend the result of \cite{HLS} and present the converse result of (\ref{l4.2.2}).

\begin{corollary}
 Let $0<\alpha<n$, $1<p<n/\alpha$, $1/q=1/p-\alpha/n$, $\omega,\,\lambda\in A_{p,q} $, $\mu=\omega/\lambda$. If $\Omega\in C(\mathbb{S}^{n-1})$, then
\begin{equation}\label{l4.2.4}
b\in {\rm BMO}_\mu\,\Longleftrightarrow\, [b,T_\alpha]:\,L^p(\omega^p)\to L^q(\lambda^q).
\end{equation}
\end{corollary}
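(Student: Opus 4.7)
The plan is to follow the template established for the non-fractional case in Corollary \ref{cor.4.8}. The forward direction ``$b\in BMO_\mu\Rightarrow [b,T_\alpha]:L^p(\omega^p)\to L^q(\lambda^q)$'' is a direct consequence of the Ding--Lu result (\ref{l4.2.2}), since the hypothesis $\Omega\in C(\mathbb{S}^{n-1})$ implies $\Omega\in L^\infty(\mathbb{S}^{n-1})$.

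For the reverse implication, I will apply Theorem \ref{theorem, structure, homogeneous kernel} with the choices $X=L^p(\omega^p)$, $Y=\widetilde{Y}=L^q(\lambda^q)$, $Z=L^\infty$, so that the condition $Y\cdot Z\subset\widetilde{Y}$ is automatic and the invariance condition (II) on $Z$ is trivial. Since $\Omega$ is continuous, the local uniform Lebesgue point hypothesis of Proposition \ref{proposition, technique, homogeneous kernel, Z=L^infty} holds on every open subset of $\mathbb{S}^{n-1}$, yielding condition (2) of Theorem \ref{theorem, structure, homogeneous kernel}. Moreover, because $\Omega$ is continuous and nonzero, by a sign argument there must exist an open subset $E\subset\mathbb{S}^{n-1}$ on which $\Omega$ is bounded both above and below by constants of the same sign, giving condition (1). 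Finally, the doubling property of the $A_{p,q}$ weights $\omega^p$ and $\lambda^q$ (which is inherited from their $A_\infty$ membership) ensures hypothesis (a) of Theorem \ref{theorem, structure, homogeneous kernel}.

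The main step, and the expected principal obstacle, is to verify the key Quasi-Banach compatibility inequality
\[
\|\chi_Q\|_X \lesssim \|\chi_Q\|_Y \mu(Q) |Q|^{\alpha/n-1}, \quad \text{i.e.}\quad \omega^p(Q)^{1/p}\lesssim \lambda^q(Q)^{1/q}\mu(Q)|Q|^{\alpha/n-1},
\]
for $\mu=\omega/\lambda$. The strategy is analogous to the three-exponent H\"older decomposition used to establish (\ref{for proof, application, 2}) in Corollary \ref{cor.4.8}, but now adapted to the fractional scaling $1/q=1/p-\alpha/n$. Concretely, I will choose three H\"older exponents $s_1,s_2,s_3$ with $\sum 1/s_i=1$ and write
\[
|Q|^{1-\alpha/n}=\int_Q \mu(x)^{a_1}\omega(x)^{a_2 p}\lambda(x)^{-a_3 p'}\,dx
\]
with the pointwise identity $\mu^{a_1}\omega^{a_2 p}\lambda^{-a_3 p'}\equiv 1$ (which, given $\mu=\omega/\lambda$, fixes two linear relations among the $a_i$). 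Applying H\"older will produce factors of $\mu(Q)$, $\omega^p(Q)$, and $\lambda^{-p'}(Q)$; the $A_{p,q}$-condition on $\lambda$ gives the key equivalence $\lambda^{-p'}(Q)^{1/p'}\sim |Q|^{1/p'+1/q}/\lambda^q(Q)^{1/q}$, converting the $\lambda^{-p'}$ factor into the desired $\lambda^q(Q)^{-1/q}|Q|^{1/p'+1/q}$; and the $A_{p,q}$-condition on $\omega$ provides the required reverse control to absorb the $\omega^p(Q)$ factor into the targeted power. The delicate point is that one must carefully balance the exponents $a_i$ so that the remaining powers of $|Q|$ collapse to exactly $|Q|^{\alpha/n-1}$; this is where the compatibility $1/q=1/p-\alpha/n$ will be essential.

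Once the weight inequality is established, Theorem \ref{theorem, structure, homogeneous kernel} directly yields $b\in BMO_\mu$ with $\|b\|_{BMO_\mu}\lesssim \|[b,T_\alpha]\|_{L^p(\omega^p)\to L^q(\lambda^q)}$, completing the equivalence. In particular, this gives a new characterization that extends the Holmes--Lacey--Wick result \cite{HLS} from the Riesz potential to homogeneous fractional integrals with merely continuous kernels, and supplies the converse of the Ding--Lu estimate (\ref{l4.2.2}).
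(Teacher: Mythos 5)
Your overall architecture coincides with the paper's: the forward implication is exactly the Ding--Lu estimate (\ref{l4.2.2}) since $C(\mathbb{S}^{n-1})\subset L^\infty(\mathbb{S}^{n-1})$, and the reverse implication is obtained from Theorem \ref{theorem, structure, homogeneous kernel} together with Proposition \ref{proposition, technique, homogeneous kernel, Z=L^infty}, taking $X=L^p(\omega^p)$, $Y=\widetilde Y=L^q(\lambda^q)$, $Z=L^\infty$, $\mu(Q)=\int_Q\omega/\lambda$; the continuity of $\Omega$ supplies both hypotheses (1) and (2), and $\lambda^q\in A_\infty$ gives condition (a). You also correctly isolate the crux, namely the inequality $\omega^p(Q)^{1/p}\lesssim\lambda^q(Q)^{1/q}\mu(Q)|Q|^{\alpha/n-1}$, and you correctly anticipate that it is proved by a three-exponent H\"older splitting in the spirit of (\ref{for proof, application, 2}).

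However, the specific splitting you propose cannot close. The pointwise constraint $\mu^{a_1}\omega^{a_2p}\lambda^{-a_3p'}\equiv1$ with $\mu=\omega/\lambda$ forces $a_2=-a_1/p$ and $a_3=a_1/p'$ up to sign, so you cannot have all three H\"older exponents positive while the local norms come out as $\mu(Q)$, $\omega^p(Q)$ and $\lambda^{-p'}(Q)$; worse, even if you could, those last two factors carry the wrong signs relative to the target, which needs $\omega^p(Q)$ with a \emph{negative} power and $\lambda^q(Q)$ with a \emph{positive} power on the majorant side. The factorization that works is $1=(\omega/\lambda)^{\theta}\,(\omega^{-p'})^{\theta/p'}\,(\lambda^{q})^{\theta/q}$ with $\theta=(1+1/p'+1/q)^{-1}$, which after H\"older gives
\begin{equation*}
|Q|^{1+1/p'+1/q}\le \mu(Q)\Big(\int_Q\omega^{-p'}\Big)^{1/p'}\Big(\int_Q\lambda^{q}\Big)^{1/q}.
\end{equation*}
One then applies the $A_{p,q}$ condition to $\omega$ (not to $\lambda$, whose $A_{p,q}$ membership is not needed in this step) to get $\big(\int_Q\omega^{-p'}\big)^{1/p'}\lesssim|Q|^{1/p'+1/q}\big(\int_Q\omega^{q}\big)^{-1/q}$, yielding $|Q|\,\omega^q(Q)^{1/q}\lesssim\mu(Q)\,\lambda^q(Q)^{1/q}$, and finally uses the auxiliary H\"older step $\omega^p(Q)^{1/p}\le|Q|^{1/p-1/q}\omega^q(Q)^{1/q}=|Q|^{\alpha/n}\omega^q(Q)^{1/q}$ --- this is precisely where the scaling $1/q=1/p-\alpha/n$ enters --- to arrive at $|Q|^{1-\alpha/n}\omega^p(Q)^{1/p}\lesssim\lambda^q(Q)^{1/q}\mu(Q)$. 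With this correction your argument is complete and agrees with the paper's intended proof.
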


\subsection{$BMO$ and Commutators in Multilinear Setting}

\medskip

After the pioneering work of Grafakos-Torres \cite{Grafakos-Torres_Adv.Math_2002}
in the multilinear Calder\v{o}n-Zyamund theory, P\'{e}rez-Torres \cite{PT-2003}
first introduced the $i$-th commutator of m-linear Calder\v{o}n-Zyamund operator $T$ and showed that $[b, T_{K_0}]_i$ is bounded from $L^{p_1}\times\cdots\times L^{p_m}$ to $L^p$ provided that $b\in BMO(\mathbb{R}^n)$, $1<p_1,\cdots,p_m,p<\infty$ with $1/p=1/p_1+\cdots+1/p_m$. Subsequently, in the celebrated work \cite{LOPTT}  Lerner et al. removed the restriction of that $p>1$ and established the multiple weighted version as well as the weak-type endpoint estimate, and see \cite{An-Duong, ChenWu1} for the non-smooth kernels cases. The corresponding results for the commutators of multilinear fractional integrals were given by Chen and Xue \cite{Chen-Xue} (see also \cite{Yu-Chen, ChenWu2}).
Recently, Chaffee \cite{Chaf} obtained the following characterized theorem.

\medskip

{\bf Theorem H} (cf. \cite{Chaf})\quad (i)\quad Suppose that $K_0$ is a homogeneous function of degree $-mn$, and there exists a ball $B\subset \mathbb{R}^{mn}$ such that $1/K_0$ can be expended to a Fourier series in $B$. If $1<p_1,\cdots,p_m,{p}<\infty$, $\frac 1p=\frac 1{p_1}+\cdots+\frac 1{p_m}$,
$$b\in BMO(\mathbb{R}^n)\,\Longleftrightarrow\, [b,T_{K_0}]_j:\, L^{p_1}(\mathbb{R}^n)\times\cdots\times L^{p_m}(\mathbb{R}^n)\to L^p(\mathbb{R}^n);$$

\bigskip

(ii)\quad Let $0<\alpha<mn$, $1<p_1,\cdots,p_m,{ p}<\infty$ with $\frac 1p=\frac 1{p_1}+\cdots+\frac1{p_m}$, $\frac 1q=\frac 1p-\frac{\alpha}n$. Then
$$b\in BMO(\mathbb{R}^n)\,\Longleftrightarrow\, [b,I_{\alpha,m}]_j:\, L^{p_1}(\mathbb{R}^n)\times\cdots\times L^{p_m}(\mathbb{R}^n)\to L^q(\mathbb{R}^n).$$

\medskip

The conclusion (i) in Theorem H was also obtained by Li and Wick \cite{LW} in a different way. Furthermore, applying Theorem B, Chaffee and Cruz-Uribe \cite{Chaf-Cruz-2017} established the following weighted version of Theorem H.

\medskip

{\bf Theorem I} (cf. \cite{Chaf-Cruz-2017})\quad (i)\quad Suppose that $K_0$ is a homogeneous function of degree $-mn$, and there exists a ball $B\subset \mathbb{R}^{mn}$ such that $1/K_0$ can be expended to a Fourier series in $B$. If $1<p_1,\cdots,p_m,{p}<\infty$ with $\frac 1p=\frac 1{p_1}+\cdots+\frac 1{p_m}$, $\omega_i\in A_{p_i}$ ($i=1,\cdots,m$), $\mu_{\vec{\omega}}=\prod_{i=1}^m\omega_i^{p/p_i}\in A_p$, then
$$b\in BMO(\mathbb{R}^n)\,\Longleftrightarrow\, [b,T_{K_0}]_j:\, L^{p_1}(\omega_1)\times\cdots\times L^{p_m}(\omega_m)\to L^p(\mu_{\vec{\omega}});$$

\medskip

(ii)\quad Let $0<\alpha<mn$, $1<p_1,\cdots,p_m,{p}<\infty$ with $\frac 1p=\frac 1{p_1}+\cdots+\frac 1{p_m}$, $\frac 1{q_j}=\frac 1{p_j}-\frac{\alpha}{mn}$, and $\frac 1q=\frac 1{q_1}+\cdots+\frac 1{q_m}$. Suppose that $\omega_j\in A_{p_i,q_i}$ and $\nu_{\vec{\omega}}=\prod_{i=1}^m\omega_i\in A_q$. Then
$$b\in BMO(\mathbb{R}^n)\,\Longleftrightarrow\,[b,I_{\alpha,m}]_j:\, L^{p_1}(\omega_1^{p_1})\times\cdots\times L^{p_m}(\omega_m^{p_m})\to L^q(\nu_{\vec{\omega}}).$$

\medskip

\begin{remark}In \cite{Chaf-Cruz-2017}, Chaffee and Cruz-Uribe pointed out that their arguments are not valid for $p<1$, and the weights are restricted in a narrower class.

Our next corollaries will remove the restriction of that $p>1$, weaken the condition of kernel $K_0$ and enlarge the weights class, moreover, present a characterized result of weak-type endpoint estimate, which is new even in un-weighted case.
\end{remark}

\begin{corollary}\label{corollary, application, multilinear}
Let $m\in \mathbb{N}$ and $T_K$ be a $m$-linear Calderon-Zygmund operator with kernel $K$ satisfying following lower bound property: there exist a positive constant $c$ and an open cone $\widetilde{\Gamma}\subset\mathbb{R}^{mn}$ with vertex at the origin such that
\begin{equation*}
  |K(x,y_1,\cdots,y_m)|\geq \frac{c}{(\sum_{j=1}^m|x-y_j|)^{mn}},\qquad \forall\, (x-y_1,\cdots,x-y_m)\in \widetilde{\Gamma}.
\end{equation*}
Let $b\in L_{loc}^1(\mathbb{R}^n)$ and $[b,T_K]_i$ be the $i$-th commutator generated by $T_K$ with $b$, $1\le i\le m$. Then the following three statements are equivalent:
\begin{enumerate}
  \item $b\in BMO(\mathbb{R}^n)$.

 \item For $\omega_j\in A_1$, $j=1,\cdots,m$, $v_{\vec{\omega}}=\prod_{j=1}^m\omega_j^{1/m}$, $\Phi(t)=t(1+\log^+t)$,
  $$v_{\vec{\omega}}(\{x\in \mathbb{R}^n: |[b,T_K]_i(f)(x)|>\lambda^m\})\lesssim \prod_{j=1}^m\left(\int_{\mathbb{R}^n}\Phi(\frac{|f(y)|}{\lambda})\omega_j(y)dy\right)^{{1}/{m}}$$
  for any $\lambda>0$.

 \item For $1<p_j<\infty$, $\omega_j\in A_{p_j}$, $j=1,\cdots, m$, $v_{\vec{\omega}}=\prod_{j=1}^m\omega_j^{p/p_j}$ with $1/p=1/p_1+\cdots+1/p_m$, $$\|[b,T_K]_i(f_1,\cdots,f_m)\|_{L^p(v_{\vec{\omega}})}\lesssim \prod_{j=1}^m\|f_j\|_{L^{p_j}(\omega_j)},\qquad \forall\, f_j\in L^{p_j}(\omega_j),\, j=1,\cdots,m.$$

\end{enumerate}
\end{corollary}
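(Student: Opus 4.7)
The plan is to establish the equivalence via the cycle $(1)\Rightarrow(3)$, $(1)\Rightarrow(2)$, $(3)\Rightarrow(1)$, $(2)\Rightarrow(1)$. The forward directions come from existing weighted theory: $(1)\Rightarrow(3)$ follows from the multilinear commutator theorem of Lerner--Ombrosi--P\'erez--Torres--Trujillo once one verifies (via H\"older applied to each factor of the multilinear $A_{\vec p}$ functional) that $\omega_j\in A_{p_j}$ for all $j$ forces $\vec{\omega}\in A_{\vec{p}}$; $(1)\Rightarrow(2)$ is the multilinear $L\log L$-type weighted endpoint estimate for $A_1$ weights, extending P\'erez's linear result to the $m$-linear setting.

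For both converse directions, the common starting point is Proposition \ref{proposition, technique, standard kernel, pointwise}. Since the multilinear Calder\'on--Zygmund kernel $K$ satisfies the standard smoothness condition built into the Section~3 set-up, Proposition \ref{proposition, technique, standard kernel, Z=L^infty} supplies condition (ii) of Proposition \ref{proposition, technique, standard kernel, pointwise}, and the hypothesized lower bound on $|K|$ provides condition (i). Consequently, for every cube $Q_1$ there exist test tuples $\vec{\phi},\vec{\psi}$ with $|\phi_j|,|\psi_j|\le 2\chi_{Q_j}$ on cubes $Q_j$ of the same side length as $Q_1$, a cube $Q$ of the same side length, and a constant $\lambda>0$ independent of $Q_1$ with $Q_j\subset\lambda Q$ for every $j$, such that
\begin{equation*}
B:=\frac{1}{|Q_1|}\int_{Q_1}|b(y)-b_{Q_1}|\,dy \,\le\, \widetilde{C}\bigl(|[b,T_K]_i(\vec{\phi})(x)|+|[b,T_K]_i(\vec{\psi})(x)|\bigr),\quad\forall\,x\in Q.
\end{equation*}

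For $(2)\Rightarrow(1)$, following the template of Corollary \ref{corollary,weak-type-endpoint}, I would set $\lambda^m:=B/(4\widetilde{C})$, decompose $Q$ into the level sets $\{|[b,T_K]_i(\vec\phi)|>\lambda^m\}$ and $\{|[b,T_K]_i(\vec\psi)|>\lambda^m\}$, invoke (2) together with $|\phi_j|,|\psi_j|\le 2\chi_{Q_j}$ to obtain $v_{\vec{\omega}}(Q)\lesssim \Phi(c/\lambda)\prod_j\omega_j(Q_j)^{1/m}$, and then use the doubling of each $\omega_j\in A_1$ to pass from $\omega_j(Q_j)$ to $\omega_j(Q)$. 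The crucial comparability $v_{\vec{\omega}}(Q)\sim\prod_j\omega_j(Q)^{1/m}$ holds here because each $\omega_j\in A_1$ gives the pointwise lower bound $\omega_j(y)\gtrsim \omega_j(Q)/|Q|$ a.e.\ on $Q$; cancelling these factors yields $1\lesssim \Phi(cB^{-1/m})$, which forces $B\lesssim 1$ and hence $b\in BMO(\mathbb{R}^n)$.

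For $(3)\Rightarrow(1)$, I would raise the pointwise inequality to the $p$-th power, integrate over $Q$ against the measure $v_{\vec{\omega}}$, and apply (3) together with doubling of the $A_{p_j}$-weights to arrive at $B\,v_{\vec{\omega}}(Q)^{1/p}\lesssim \|[b,T_K]_i\|_{\mathrm{op}}\prod_j\omega_j(Q)^{1/p_j}$. The principal obstacle is the weighted comparison $\prod_j\omega_j(Q)^{1/p_j}\lesssim v_{\vec{\omega}}(Q)^{1/p}$, since direct H\"older only yields the reverse inequality and the ratio can a priori be unbounded. The resolution combines the individual $A_{p_j}$ bounds $\omega_j(Q)^{1/p_j}\sigma_j(Q)^{1/p_j'}\lesssim|Q|$ (with dual weight $\sigma_j=\omega_j^{1-p_j'}$) with the multilinear $A_{\vec{p}}$ bound $v_{\vec{\omega}}(Q)^{1/p}\prod_j\sigma_j(Q)^{1/p_j'}\lesssim|Q|^m$ (derived from $\omega_j\in A_{p_j}$), along with the reverse-H\"older self-improvement of each $\sigma_j$, to eliminate the dual factors and recover the required inequality. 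Identifying the clean quantitative form of this comparison is the subtlest step in the proof.
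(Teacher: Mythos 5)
Your overall architecture matches the paper's: the forward implications are quoted from the weighted multilinear theory (the paper cites Theorems 3.16 and 3.18 of Lerner et al.), and $(2)\Rightarrow(1)$ is carried out exactly as you describe, via Propositions \ref{proposition, technique, standard kernel, Z=L^infty} and \ref{proposition, technique, standard kernel, pointwise}, the choice $\lambda^m=B/(4\widetilde C)$, and the $A_1$ pointwise lower bound giving $\prod_j\omega_j(Q)^{1/m}\lesssim v_{\vec\omega}(Q)$. For $(3)\Rightarrow(1)$ the paper applies the structure theorem (Theorem \ref{Theorem, structure, standard kernel}) with $X_j=L^{p_j}(\omega_j)$, $Y=\widetilde Y=L^p(v_{\vec\omega})$, $Z=L^\infty$ rather than integrating the pointwise estimate as you propose; both routes reduce to the identical weight inequality $\prod_j\omega_j(Q)^{1/p_j}\lesssim v_{\vec\omega}(Q)^{1/p}$, so your alternative is legitimate.

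However, your resolution of that crucial inequality is stated with the key auxiliary estimate in the wrong direction. You propose to combine $\omega_j(Q)^{1/p_j}\sigma_j(Q)^{1/p_j'}\lesssim|Q|$ with ``$v_{\vec\omega}(Q)^{1/p}\prod_j\sigma_j(Q)^{1/p_j'}\lesssim|Q|^m$''. The latter is the multilinear $A_{\vec P}$ \emph{upper} bound, and chaining two upper bounds of this shape cannot produce the lower bound $\prod_j\omega_j(Q)^{1/p_j}\lesssim v_{\vec\omega}(Q)^{1/p}$; what is actually needed is the \emph{reverse} estimate $|Q|^m\lesssim v_{\vec\omega}(Q)^{1/p}\prod_j\sigma_j(Q)^{1/p_j'}$, which then divides against the $A_{p_j}$ bounds to give the claim. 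The good news is that this reverse estimate requires no reverse H\"older self-improvement and no weight hypothesis at all: since $\prod_j\omega_j(x)^{-1/(mp_j)}\cdot v_{\vec\omega}(x)^{1/(mp)}\equiv 1$, H\"older with exponents $mp_1',\dots,mp_m',mp$ (whose reciprocals sum to $1$ precisely because $1/p=\sum_j 1/p_j$) yields
\begin{equation*}
|Q|=\int_Q\prod_{j=1}^m\omega_j(x)^{\frac{1}{mp_j'}-\frac1m}\,v_{\vec\omega}(x)^{\frac{1}{mp}}dx
\le\prod_{j=1}^m\Bigl(\int_Q\omega_j^{1-p_j'}\Bigr)^{\frac{1}{mp_j'}}\Bigl(\int_Q v_{\vec\omega}\Bigr)^{\frac{1}{mp}},
\end{equation*}
and raising to the $m$-th power gives exactly $|Q|^m\le\prod_j\sigma_j(Q)^{1/p_j'}v_{\vec\omega}(Q)^{1/p}$. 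This is how the paper closes the argument (written out for $m=2$). With that one-line correction your proof of $(3)\Rightarrow(1)$ goes through.
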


\begin{proof}
  For simplicity, we only present the proof for $m=2$ and $i=1$, since the other cases can be treated similarly.
  Note that $v_{\vec{\omega}}\in A_{\vec{P}}$ for $\omega_j\in A_{p_j}$, $1\le p_j<\infty$, $j=1,\,2$. Then
  $(1)\Longrightarrow (2)$ and $(1)\Longrightarrow (3)$ follow directly from \cite[Theorems 3.16 and 3.18]{LOPTT}.

  Next, we deal with the opposite direction. Without loss of generality, using the lower bound of $K$ and the property of Calder\'{o}n-Zygmund kernel, we have
    \begin{equation*}
    \frac{c}{(|x-y|+|x-z|)^{2n}}\leqslant K(x,y,z) \leqslant \frac{C}{(|x-y|+|x-z|)^{2n}}
  \end{equation*}
  for all $(x-y,x-z)\in \widetilde{\Gamma}$, where $0<c<C$.
 By Propositions \ref{proposition, technique, standard kernel, Z=L^infty}
  and \ref{proposition, technique, standard kernel, pointwise} for $\alpha=0,\,i=1$, for any fixed cube $Q_1$,
we can find $\phi_j,\,\psi_j$ satisfying
  $|\phi_j|,\,|\psi_j|\leq 2\chi_{Q_j}\, (j=1,\,2)$, where $|Q_1|=|Q_2|$,
  and find a cube $Q$ with the same side length of $Q_1$  such that $Q_1,Q_2\subset \lambda Q$ for some $\lambda>0$ independent of $Q_1$,
  and
\begin{equation*}
  \begin{split}
    B:= &\frac{1}{|Q_1|}\int_{Q_1}|b(y)-b_{Q_1}|dy\\
    \leq &\widetilde{C}
    \left(|[b,T_K]_1(\phi_1,\phi_2)(x)|+|[b,T_K]_1(\psi_1,\psi_2)(x)|\right)\quad \text{for\ all\ }x\in Q,
  \end{split}
\end{equation*}
where the constant $\widetilde{C}$ is independent of the choice of $Q_1$.
Taking $\lambda=\sqrt{\frac{\tilde{C}^{-1}B}{4}}$, we get
  \begin{equation*}
  \begin{split}
    v_{\vec{\omega}}(Q)= &v_{\vec{\omega}}(\{x\in Q: \widetilde{C}^{-1}B>2\lambda^2\})\\
        \leqslant&  v_{\vec{\omega}}(\{x\in Q: |[b,T_K]_1(\phi_1,\phi_2)(x)|+|[b,T_K]_1(\psi_1,\psi_2)(x)|>2\lambda^2\})
    \\
    \leqslant &
    v_{\vec{\omega}}(\{x\in Q: |[b,T_K]_1(\phi_1,\phi_2)(x)|>\lambda^2\})
    +
    v_{\vec{\omega}}(\{x\in Q: |[b,T_K]_1(\psi_1,\psi_2)(x)|>\lambda^2\}).
  \end{split}
  \end{equation*}
  Using (2), we deduce that
  \begin{equation*}
    \begin{split}
     & v_{\vec{\omega}}(\{x\in Q: |[b,T_K]_1(\phi_1,\phi_2)(x)|>\lambda^2\})\\
      &\qquad\qquad\lesssim
      \left(\int_{\mathbb{R}^n}\Phi(\frac{|\phi_1(y)|}{\lambda})\omega_1(y)dy\right)^{1/2}
      \left(\int_{\mathbb{R}^n}\Phi(\frac{|\phi_2(y)|}{\lambda})\omega_2(y)dy\right)^{1/2}
      \\
     &\qquad\qquad \lesssim
      \left(\int_{Q_1}\Phi(\frac{2}{\lambda})\omega_1(y)dy\right)^{1/2}\left(\int_{Q_2}\Phi(\frac{2}{\lambda})\omega_2(y)dy\right)^{1/2}\\
     &\qquad\qquad \leq
      \Phi(\frac{2}{\lambda})\omega_1(Q_1)^{1/2}\omega_2(Q_2)^{1/2}.
    \end{split}
  \end{equation*}
  Similarly,
  \begin{equation*}
      v_{\vec{\omega}}(\{x\in Q: |[b,T_K]_1(\psi_1,\psi_2)(x)|>\lambda^2\})
      \lesssim
      \Phi(\frac{2}{\lambda})\omega_1(Q_1)^{1/2}\omega_2(Q_2)^{1/2}.
  \end{equation*}
Combining with the above estimates, we conclude that
\begin{equation*}
  \begin{split}
    v_{\vec{\omega}}(Q)
    \lesssim
    \Phi(\frac{2}{\lambda})\omega_1(Q_1)^{1/2}\omega_2(Q_2)^{1/2}
      =
    \Phi(\,\sqrt{{16\widetilde{C}}/{B}}\,)\,\omega_1(Q_1)^{1/2}\,\omega_2(Q_2)^{1/2}.
  \end{split}
\end{equation*}
Sine $\omega_1,\,\omega_2\in A_1$, we have $\omega_i(Q)\lesssim |Q|\inf_Q\omega_i$, $i=1,\,2$. Then
\begin{equation*}
  \begin{split}
    \omega_1(Q)^{1/2}\omega_2(Q)^{1/2}
    \lesssim
    |Q|\left(\inf_Q\omega_1\inf_Q\omega_2\right)^{1/2}
        \lesssim
    \int_Q\omega_1(x)^{1/2}\omega_2(x)^{1/2}dx=v_{\vec{\omega}}(Q).
  \end{split}
\end{equation*}
Thus, we use the fact $Q_1,Q_2\subset \lambda Q$ and the doubling property of $\omega_1,\omega_2$ to deduce that
\begin{equation*}
  \omega_1(Q_1)^{1/2}\omega_2(Q_2)^{1/2}
  \lesssim
  \omega_1(Q)^{1/2}\omega_2(Q)^{1/2}
  \lesssim
  v_{\vec{\omega}}(Q).
\end{equation*}
This implies that
\begin{equation*}
  1\lesssim \Phi(\sqrt{{16\widetilde{C}}/{B}}),
\end{equation*}
and completes the proof of $(2)\Longrightarrow (1)$.

Finally, we verify that $(3)\Longrightarrow (1)$. Take $X_j=L^{p_j}(\omega_j)$ ($j=1,\,2$), $Y=\widetilde{Y}=L^p(v_{\widetilde{\omega}})$, $Z=L^{\infty}$, $\alpha=0$, $\mu(Q)=|Q|$. By Theorem 3.1, it suffices to show that
\begin{equation*}
  \|\chi_Q\|_{X_1}\cdot \|\chi_Q\|_{X_2}\lesssim \|\chi_Q\|_{Y}\ \text{for every cube}\ Q.
\end{equation*}
Note that $1=\frac{1}{2p_1'}+\frac{1}{2p_2'}+\frac{1}{2p}$. The H\"{o}lder inequality yields that
\begin{equation*}
  \begin{split}
  |Q| &=  \int_{Q}\omega_1(x)^{\frac{1}{2p_1'}-\frac{1}{2}}\omega_2(x)^{\frac{1}{2p_2'}-\frac{1}{2}}v_{\vec{\omega}}(x)^{\frac{1}{2p}} dx\\
   &\leq
  \left(\int_{Q} \omega_1^{1-p_1'}(x)dx\right)^{\frac{1}{2p_1'}}\left(\int_{Q} \omega_2^{1-p_2'}(x)dx\right)^{\frac{1}{2p_2'}}
  \left(\int_Qv_{\vec{\omega}}(x)dx\right)^{1/2p}.
  \end{split}
\end{equation*}
Using the property of $A_p$ weight, we have
  \begin{equation*}
    \left(\int_Q\omega_j(x)dx\right)^{1/p_j}\left(\int_Q\omega_j(x)^{1-p_j'}dx\right)^{1/p_j'}\lesssim |Q|.
  \end{equation*}
Then
\begin{equation*}
  \begin{split}
    \prod_{j=1}^2\left(\int_Q\omega_j(x)dx\right)^{1/p_j}\left(\int_Q\omega_j(x)^{1-p_j'}dx\right)^{1/p_j'}
      \lesssim
    |Q|^2
    \lesssim
    \prod_{j=1}^2\left(\int_Q\omega_j(x)^{1-p_j'}dx\right)^{1/p_j'}\left(\int_Qv_{\vec{\omega}}(x)dx\right)^{1/p},
  \end{split}
\end{equation*}
which implies that
\begin{equation*}
  \prod_{j=1}^2\left(\int_Q\omega_i(x)dx\right)^{1/p_i}\lesssim \left(\int_Qv_{\vec{\omega}}(x)dx\right)^{1/p},
\end{equation*}
that is,
\begin{equation*}
  \|\chi_Q\|_{X_1}\cdot \|\chi_Q\|_{X_2}\lesssim \|\chi_Q\|_{Y}\ \text{for every cube}\ Q.
\end{equation*}
This completes the proof of  that $(3)\,\Longrightarrow (1)$.
\end{proof}

\begin{remark}
Using \cite[Theorem 1.6]{Ofs-Moen_Arxiv_2017}, the weight condition (3) in Corollary \ref{corollary, application, multilinear}
 can be replaced by
$\vec{\omega}=(\omega_1,\cdots,\omega_m)\in  A_{\vec{P}}$ with $\omega_j\in A_{\infty}$ for $1\leq j\leq m$.
\end{remark}

Similarly, invoking \cite[Theorem 1.4]{ChenWu2} and applying
Propositions \ref{Theorem, structure, standard kernel}
and \ref{proposition, technique, standard kernel, Z=L^infty},
for the commutators of multilinear fractional integrals we can obtain the following result.

\begin{corollary}
Let $m,\,n\in \mathbb{N}$, $0<\alpha<mn$, $1<p_1,\cdots,p_m<\infty$, $\frac 1p=\frac 1{p_1}+\cdots+\frac 1{p_m}$, $\frac 1{q}=\frac 1{p}-\frac{\alpha}{n}$,
$\vec{\omega}=(\omega_1,\cdots,\omega_m)\in A_{\vec{P},q}$, $\omega_j^{p_j}\in A_\infty$ ($j=1,\cdots,m$), $\nu_{\vec{\omega}}=\prod_{j=1}^m\omega_j$. Then for any fixed $i\in\{1,\cdots,m\}$,
$$b\in BMO(\mathbb{R}^n)\,\Longleftrightarrow\,[b,I_{\alpha,m}]_i:\, L^{p_1}(\omega_1^{p_1})\times\cdots\times L^{p_m}(\omega_m^{p_m})\to L^q(\nu_{\vec{\omega}}^q).$$
\end{corollary}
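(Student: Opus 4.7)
I would prove the two directions separately. The forward implication ($\Rightarrow$) is already available: given $b \in BMO(\mathbb{R}^n)$ together with $\vec\omega \in A_{\vec P,q}$ and $\omega_j^{p_j}\in A_\infty$, \cite[Theorem 1.4]{ChenWu2} supplies exactly the desired weighted multilinear boundedness of $[b,I_{\alpha,m}]_i$.

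For the reverse implication, the plan is to invoke Theorem \ref{Theorem, structure, standard kernel} together with Proposition \ref{proposition, technique, standard kernel, Z=L^infty}, in parallel with the proof of Corollary \ref{corollary, application, multilinear}. I would take $X_j = L^{p_j}(\omega_j^{p_j})$, $Y = \widetilde Y = L^q(\nu_{\vec\omega}^q)$, $Z = L^\infty(\mathbb{R}^n)$, and $\mu(Q)=|Q|$. With these choices the abstract hypotheses become concrete checks. The product/scaling relations involving $Z$ are immediate since $Z = L^\infty$. The doubling condition (b) of Theorem \ref{Theorem, structure, standard kernel} holds because $\omega_j^{p_j}\in A_\infty$ is doubling, and $\mu(Q) = |Q|$ is trivially doubling. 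The multilinear fractional kernel $K_\alpha(x,y_1,\dots,y_m) = |(x-y_1,\dots,x-y_m)|^{-(mn-\alpha)}$ is positive and therefore obviously satisfies the two-sided bound required by condition (1) of Theorem \ref{Theorem, structure, standard kernel} on any open cone in $(\mathbb{R}^n)^m$; and the limit condition (2) is furnished by Proposition \ref{proposition, technique, standard kernel, Z=L^infty}, since this kernel obeys the Calder\'on--Zygmund gradient estimate with $\delta = 1$.

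The one non-routine step --- and the main expected obstacle --- is the capacity inequality
\[
\prod_{j=1}^m \omega_j^{p_j}(Q)^{1/p_j} \lesssim \nu_{\vec\omega}^q(Q)^{1/q}\,|Q|^{\alpha/n}, \qquad \text{for every cube } Q\subset \mathbb{R}^n.
\]
My plan for this is to mimic the algebraic identity in the proof of Corollary \ref{corollary, application, multilinear}: decompose the constant function $1$ on $Q$ as a product of carefully selected fractional powers of the $\omega_j$ and of $\nu_{\vec\omega}$ so that a multi-exponent H\"older inequality, followed by the $A_{\vec P,q}$-estimate
\[
\nu_{\vec\omega}^q(Q)^{1/q}\prod_{j=1}^m\omega_j^{-p_j'}(Q)^{1/p_j'} \lesssim |Q|^{m-\alpha/n},
\]
produces the stated bound. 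The $A_\infty$ hypothesis on each $\omega_j^{p_j}$ is expected to enter through the attendant reverse H\"older inequalities, which compensate whenever $q/p_j$ falls outside the range handled by a bare H\"older step. Once the capacity inequality is in hand, Theorem \ref{Theorem, structure, standard kernel} delivers $b \in BMO_\mu = BMO(\mathbb{R}^n)$ together with $\|b\|_{BMO} \lesssim \|[b,I_{\alpha,m}]_i\|_{L^{p_1}(\omega_1^{p_1})\times\cdots\times L^{p_m}(\omega_m^{p_m})\to L^q(\nu_{\vec\omega}^q)}$, completing the reverse direction and hence the proof.
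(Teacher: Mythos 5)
Your proposal is correct and follows the paper's route exactly: the paper likewise derives the forward direction from \cite[Theorem 1.4]{ChenWu2} and the reverse direction from Theorem \ref{Theorem, structure, standard kernel} together with Proposition \ref{proposition, technique, standard kernel, Z=L^infty}, with the same choices $X_j=L^{p_j}(\omega_j^{p_j})$, $Y=\widetilde Y=L^q(\nu_{\vec\omega}^q)$, $Z=L^\infty$, $\mu(Q)=|Q|$ as in Corollary \ref{corollary, application, multilinear}. The one step you leave schematic, the capacity inequality, does hold and closes most cleanly not through $A_{\vec P,q}$ but directly from $\omega_j^{p_j}\in A_\infty$ via $\frac{1}{|Q|}\int_Q w\lesssim\exp\bigl(\frac{1}{|Q|}\int_Q\log w\bigr)$ for each $\omega_j^{p_j}$ followed by Jensen's inequality for $\nu_{\vec\omega}^q$.
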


\subsection{Lipschitz Function Spaces and Commutators}

\medskip

The investigation on the characterization of Lipschitz spaces via the boundedness of commutators in certain function spaces has also attracted a number of attentions. In this subsection, we will apply our general theorem to present several new developments in this topic.

\subsubsection{${\rm Lip}_\beta(\mathbb{R}^n)$ and commutators}

\medskip

In 1978, Janson \cite{Jan} first proved that for $0<\beta<1$, $b\in {\rm Lip}_\beta(\mathbb{R}^n)$ if and only if $[b,T_0]$ is bounded from $L^p(\mathbb{R}^n)$ to $L^q(\mathbb{R}^n)$ for $1<p<q<\infty$ with $1/q=1/p-\beta/n$. Later on, Paluszynski \cite{Pal-1995} established the corresponding result for the commutator of Riesz potential $[b,I_\alpha]$, and extended these results to the boundedness of $(L^p, \dot{F}_q^{\beta,\infty})$, where $ \dot{F}_q^{\beta,\infty}$ denotes the homogeneous Triebel-Lizorkin spaces (see \cite{Pal-1995}).

In 1965, Calder\'on \cite{Calderon-1965} showed that for $1<p<\infty$ and $\Omega\in L(\log^+L)(\mathbb{S}^{n-1})$ satisfying that
\begin{equation}\label{l4.6}
\int_{\mathbb{S}^{n-1}}\Omega(x')x_j'd\sigma(x')=0,\, j=1,\cdots,n.
\end{equation}
Then
\begin{equation}\label{l4.5}
b\in {\rm Lip}(\mathbb{R}^n)\,\Longrightarrow\, [b,T_{-1}]:\,L^p(\mathbb{R}^n)\to L^p(\mathbb{R}^n),
\end{equation}
and $\|[b,T_{-1}]\|_{L^p(\mathbb{R}^n)\rightarrow L^p(\mathbb{R}^n)}\lesssim \|b\|_{{\rm Lip}(\mathbb{R}^n)}$. Recently, Chen, Ding and Hong \cite{CDH-2016} obtained the following result.

\medskip

{\bf Theorem J} (cf. \cite{CDH-2016})\quad Let $1<p<\infty$. Suppose that $b\in L_{\rm loc}^1(\mathbb{R}^n)$ and $\Omega\in {\rm Lip}(\mathbb{S}^{n-1})$ satisfying (\ref{mean value zero}) and (\ref{l4.6}).
Then the following statements are equivalent:
\begin{enumerate}[(i)]
  \item $b\in {\rm Lip}(\mathbb{R}^n)$,
  \item $[b,T_{-1}]$ is bounded on $L^p(\mathbb{R}^n)$,
  \item $[b,T_{-1}]$ is bounded from $L^1(\mathbb{R}^n)$ to $L^{1,\infty}(\mathbb{R}^n)$.
\end{enumerate}

\medskip

Using Proposition \ref{proposition, structure and technique for Z=L^1}, we can weaken the condition of $\Omega$ in Theorem J as follows.

\begin{corollary}\label{corollary, calderon commutator}
  Let $1<p<\infty$. Suppose that $b\in L_{loc}^1(\mathbb{R}^n)$, $\Omega\in L(\log^+L)(\mathbb{S}^{n-1})$ satisfying (\ref{mean value zero})
  and (\ref{l4.6}). If there exist a open $E\subset \mathbb{S}^{n-1}$ and constants $c$ and $C$ such that
\begin{equation}\label{l4.7}
c\leqslant \Omega(x')\leqslant C,\,\forall\, x'\in E,\quad \text{where} \,0<c<C\,\text{or}\, c<C<0,
\end{equation}
then the following statements are equivalent:
\begin{enumerate}[(1)]
  \item $b\in {\rm Lip}(\mathbb{R}^n)$,
  \item $[b,T_{-1}]$ is bounded on $L^p(\mathbb{R}^n)$,
  \item $[b,T_{-1}]$ is bounded from $L^p(\mathbb{R}^n)$ to $L^{p,\infty}(\mathbb{R}^n)$,
\end{enumerate}

In particular, if $\Omega\in C(\mathbb{S}^{n-1})$, then (\ref{l4.7}) holds, and the three statements above are equivalent.

Furthermore, if $\Omega\in {\rm Lip}_{\beta}(\mathbb{S}^{n-1}) (0<\beta<1)$, then
\begin{equation*}
  b\in {\rm Lip}(\mathbb{R}^{n}) \Longleftrightarrow [b,T_{-1}]:L^1(\mathbb{R}^n)\rightarrow L^{1,\infty}(\mathbb{R}^n).
\end{equation*}

\end{corollary}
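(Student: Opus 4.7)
The plan is to prove this characterization by applying Proposition \ref{proposition, structure and technique for Z=L^1} twice---once for the main $L^p$-scale statement and once for the endpoint $L^1\to L^{1,\infty}$ statement---supplementing with classical upper bounds on the sufficiency side.

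For the main equivalences, I would first dispatch $(1)\Rightarrow(2)\Rightarrow(3)$. The implication $(1)\Rightarrow(2)$ is Calder\'on's classical $L^p$-boundedness result (\ref{l4.5}), valid precisely under our hypotheses $\Omega\in L(\log^+L)(\mathbb{S}^{n-1})$ together with (\ref{mean value zero}) and (\ref{l4.6}). The implication $(2)\Rightarrow(3)$ is the trivial embedding $L^p\hookrightarrow L^{p,\infty}$.

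The novel content is $(3)\Rightarrow(1)$, for which I would invoke Proposition \ref{proposition, structure and technique for Z=L^1} with
\[
\alpha=-1,\qquad X=L^p(\mathbb{R}^n),\qquad Y=L^{p,\infty}(\mathbb{R}^n),\qquad Z=L^1(\mathbb{R}^n),\qquad \widetilde{Y}=L^{\widetilde{p},1}(\mathbb{R}^n),
\]
where $1/\widetilde{p}=1+1/p$, and $\mu(Q)=|Q|^{1+1/n}$ so that $BMO_\mu={\rm Lip}_1(\mathbb{R}^n)$ by Meyers' theorem. Lemma \ref{lemma, auxiliary space, Lorentz} supplies the relations $Y\cdot Z\subset \widetilde{Y}$ and $\|\chi_Q\|_{\widetilde{Y}}\sim\|\chi_Q\|_Y\|\chi_Q\|_Z$. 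The size compatibility
\[
\|\chi_Q\|_X=|Q|^{1/p}\sim\|\chi_Q\|_Y\,\mu(Q)\,|Q|^{\alpha/n-1}=|Q|^{1/p}\cdot|Q|^{1+1/n}\cdot|Q|^{-1/n-1}
\]
holds as an equality, and the dilation invariance $\|\chi_{\lambda Q}\|_{L^{p,\infty}}\sim\lambda^{n/p}\|\chi_Q\|_{L^{p,\infty}}$ gives hypothesis (a) of Theorem \ref{theorem, structure, homogeneous kernel}. The local lower and upper bound on $\Omega$ over $E$ is exactly (\ref{l4.7}). Proposition \ref{proposition, structure and technique for Z=L^1} then yields $b\in BMO_\mu={\rm Lip}(\mathbb{R}^n)$.

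For the particular case $\Omega\in C(\mathbb{S}^{n-1})$, I would argue that since $\Omega$ is nonzero and continuous there exists $x_0'\in\mathbb{S}^{n-1}$ with $\Omega(x_0')\neq 0$; continuity produces a neighborhood $E$ of $x_0'$ on which $\Omega$ retains the sign of $\Omega(x_0')$ and is bounded between two positive (or two negative) constants, which is exactly (\ref{l4.7}). For the endpoint claim with $\Omega\in {\rm Lip}_\beta(\mathbb{S}^{n-1})$, the direction $b\in{\rm Lip}(\mathbb{R}^n)\Rightarrow [b,T_{-1}]:L^1\to L^{1,\infty}$ is the endpoint bound of \cite{CDH-2016}, whose proof relies on a Calder\'on--Zygmund decomposition exploiting the H\"older smoothness of $\Omega$. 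The converse I would derive from Proposition \ref{proposition, structure and technique for Z=L^1} once more, taking $X=L^1$, $Y=L^{1,\infty}$, $Z=L^1$, $\widetilde{Y}=L^{1/2,1}$ via Lemma \ref{lemma, auxiliary space, Lorentz}, and the same $\mu(Q)=|Q|^{1+1/n}$; since ${\rm Lip}_\beta(\mathbb{S}^{n-1})\subset C(\mathbb{S}^{n-1})$, the local bound (\ref{l4.7}) is available, and the size and dilation conditions are again immediate. The main obstacle I anticipate is not on the necessity side---where the structure theorem does all the work---but rather in invoking the upper bound at the endpoint: establishing $[b,T_{-1}]:L^1\to L^{1,\infty}$ under only ${\rm Lip}_\beta$ smoothness of $\Omega$ requires the delicate Calder\'on--Zygmund machinery for the first-order Calder\'on commutator, which is the essential reason we must strengthen $\Omega\in L(\log^+L)$ to $\Omega\in{\rm Lip}_\beta$ at the endpoint.
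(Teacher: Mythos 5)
Your proposal is correct and follows essentially the same route as the paper: classical upper bounds for $(1)\Rightarrow(2)\Rightarrow(3)$, then Proposition \ref{proposition, structure and technique for Z=L^1} with $X=L^p$, $Y=L^{p,\infty}$, $\widetilde{Y}=L^{\widetilde p,1}$, $Z=L^1$, $\alpha=-1$, $\mu(Q)=|Q|^{(n+1)/n}$ for the necessity, exactly as in the paper. The only (harmless) divergence is at the endpoint converse, where you reuse $Z=L^1$ with $\widetilde{Y}=L^{1/2,1}$ via Lemma \ref{lemma, auxiliary space, Lorentz}, whereas the paper switches to $Z=L^\infty$, $\widetilde{Y}=Y=L^{1,\infty}$ and invokes Theorem \ref{theorem, structure, homogeneous kernel} together with Proposition \ref{proposition, technique, homogeneous kernel, Z=L^infty}; both instantiations satisfy the hypotheses since $\Omega\in{\rm Lip}_\beta(\mathbb{S}^{n-1})\subset C(\mathbb{S}^{n-1})$.
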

\begin{proof}
  $(1)\Rightarrow (2)\Rightarrow (3)$ directly follow from (\ref{l4.5}) and $L^p(\mathbb{R}^n)\subset L^{p,\infty}(\mathbb{R}^n)$.

  Now we verify $(3)\Rightarrow (1)$.
  Take $X=L^p(\mathbb{R}^n)$, $Y=L^{p,\infty}(\mathbb{R}^n)$, $\widetilde{Y}=L^{\tilde{p},1}(\mathbb{R}^n)$
  where $1/\widetilde{p}=1/p+1$, $Z=L^1(\mathbb{R}^n)$,
  and take $\alpha=-1$, $\mu(Q)=|Q|^{(n+1)/n}$.
  Recalling ${\rm Lip}(\mathbb{R}^n)=BMO_{\mu}$ with $\mu(Q)=|Q|^{(n+1)/n}$,
  this conclusion follows immediately from Proposition \ref{proposition, structure and technique for Z=L^1} and Lemma \ref{lemma, auxiliary space, Lorentz}.

  Moreover, for $\Omega\in {\rm Lip}_{\beta}(\mathbb{S}^{n-1})\, (0<
  \beta<1)$ and $b\in {\rm Lip}(\mathbb{R}^n)$, it is easy to verify that $K(x,y)=\frac{\Omega(x-y)}{|x-y|^{n+1}}(b(x)-b(y))$ is a standard kernel.
  Since $ \Omega\in {\rm Lip}_{\beta}(\mathbb{S}^{n-1})\subset L(\log^+L)(\mathbb{S}^{n-1})$, by (\ref{l4.5}) again, we get the $L^p(\mathbb{R}^n)$-  boundedness of $[b,T_{-1}]$ for $1<p<\infty$. Furthermore, the $L^1(\mathbb{R}^n)\rightarrow L^{1,\infty}(\mathbb{R}^n)$ boundedness
  of $[b,T_{-1}]$ follows from the standard arguments. For the inverse direction, we take $X=L^1(\mathbb{R}^n)$, $Y=\widetilde{Y}=L^{1,\infty}(\mathbb{R}^n)$, $Z=L^{\infty}(\mathbb{R}^n)$,
  and take $\alpha=-1$, $\mu(Q)=|Q|^{(n+1)/n}$. Then the desired conclusion follows immediately from
  Theorem \ref{theorem, structure, homogeneous kernel}, Proposition \ref{proposition, technique, homogeneous kernel, Z=L^infty}
  and Lemma \ref{lemma, auxiliary space, Lorentz}.
\end{proof}

For the generalization of characterized theorem of ${\rm Lip}_{\beta}(\mathbb{R}^n)$ ($0<\beta<1$) in \cite{Jan,Pal-1995}, we will leave it to the next subsection for more general weighted version.

\subsubsection{${\rm Lip}_{\beta,\omega}(\mathbb{R}^n)$ and commutators}

\medskip

This subsection is devoted to the characterization of the weighted Lipschitz spaces ${\rm Lip}_{\beta,\omega}$. In \cite{HG-2008}, Hu and Gu first established the following results, which can be regarded as the weighted version of the results in \cite{Jan, Pal-1995}.

\medskip

{\bf Theorem K} (cf. \cite{HG-2008})\quad{\it {\rm (i)}\quad Let $1<p,q <\infty$, $1/q=1/p-\beta/n$, $0<\beta<1$, $\omega\in A_1(\mathbb{R}^n)$.
Let $T$ be the Calderon-Zygmund operator associated with $K$. Suppose that there is a ball $B\in \mathbb{R}^n$ such that $1/K$ can be  expended an absolutely convergent Fourier series. Then for $b\in L_{\rm loc}^1(\mathbb{R}^n)$,
$$b\in {\rm Lip}_{\beta,\omega}\,\Longleftrightarrow\,[b,T]:\,L^p(\omega)(\mathbb{R}^n)\to L^q(\omega^{1-q})(\mathbb{R}^n);$$

{\rm (ii)}\quad Let $0<\alpha<n$, $0<\beta<1$ with $0<\alpha+\beta<n$, $1<p<n/(\alpha+\beta)$, $1/q=1/p-(\alpha+\beta)/n$ and $\omega\in A_1$. Then for $b\in L_{\rm loc}^1(\mathbb{R}^n)$ and the commutator of Riesz potential $[b,I_\alpha]$, we have
$$b\in {\rm Lip}_{\beta,\omega}\,\Longleftrightarrow\, [b, I_\alpha]:\, L^p(\omega)\to L^q(\omega^{1-(1-\alpha/n)q}).$$
}

Subsequently, under assuming $\Omega$ satisfies certain $L^s$-Dini conditions and $b\in {\rm Lip}_{\beta, \omega}$, Lin, Liu and Pan \cite{LLP} (resp., Liu and Zhou \cite{LZ}) proved that $[b,T_\alpha]$ (resp. $[b, T_0]$) is bounded from $L^p(\omega)$ to $L^q(\omega^{1-(1-\alpha/n)q})$ for $0<\beta<1$, $0<\alpha<n$ (resp., $\alpha=0$) with $0<\beta+\alpha<n$, $1<p<n/(\alpha+\beta)$ and $1/q=1/p-(\alpha+\beta)/n$. But it is not clear whether the corresponding converse result in \cite{LLP,LZ} is also true.

Applying our general theorem, we can establish the following result.
\begin{corollary}\label{cor.18}
  Let $1<p,q <\infty$, $1/q=1/p-(\alpha+\beta)/n$, $0<\beta<1$, $0\le\alpha<n$ with $0<\alpha+\beta<n$, $\omega$ be a doubling weight, and $T_{\alpha}$ be the integral operator associated to the kernel $\Omega$ with nonzero homogeneous function of degree 0 and satisfying (\ref{mean value zero}) for $\alpha=0$. Suppose that there exists an open subset $E\subset\mathbb{S}^{n-1}$ such that
  \begin{equation*}
    \lim_{r\rightarrow 0}\frac{1}{\sigma(B(x',r)\cap \mathbb{S}^{n-1})}\int_{B(x',r)\cap \mathbb{S}^{n-1}}|\Omega(z')-\Omega(x')|d\sigma(z')=0.
  \end{equation*}
For $b\in L_{\rm loc}^1(\mathbb{R}^n)$, we have
\begin{equation}\label{l4.9}
[b,T_{\alpha}]:\,L^p(\omega)\to L^q(\omega^{1-(1-\alpha/n)q})\,\Longrightarrow\, b\in {\rm Lip}_{\beta,\omega}.
\end{equation}
\end{corollary}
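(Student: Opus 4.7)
The plan is to apply Theorem~\ref{theorem, structure, homogeneous kernel} combined with Proposition~\ref{proposition, technique, homogeneous kernel, Z=L^infty}, following the template used in the proof of Corollary~\ref{corollary,one-weight}. First I would choose
\[
X = L^p(\omega),\quad Y = \widetilde{Y} = L^q(\omega^{1-(1-\alpha/n)q}),\quad Z = L^\infty,\quad \mu(Q) = \omega(Q)^{1+\beta/n},
\]
so that $BMO_\mu = {\rm Lip}_{\beta,\omega}$ by definition. With $Z = L^\infty$ and $\widetilde{Y}=Y$, the assumptions (I)--(II) of Theorem~\ref{theorem, structure, homogeneous kernel} on the auxiliary pair $(\widetilde{Y},Z)$ collapse to the trivial identity $\|\chi_Q\|_{L^\infty} = 1$ together with the translation-dilation invariance of $L^\infty$.

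The first technical step is to verify the structural norm inequality
\[
\|\chi_Q\|_X \lesssim \|\chi_Q\|_Y \, \mu(Q) \, |Q|^{\alpha/n - 1}.
\]
Setting $s := 1 - (1-\alpha/n)q$ and using the scaling relation $1/p - 1/q = (\alpha+\beta)/n$, a short arithmetic simplification reduces this to
\[
|Q|^{1-\alpha/n} \lesssim \Bigl(\int_Q \omega(x)^s\,dx\Bigr)^{1/q} \Bigl(\int_Q \omega(x)\,dx\Bigr)^{1 - 1/q - \alpha/n},
\]
which I would establish by a direct H\"older argument applied to the pointwise decomposition $\omega(x)^{1-\alpha/n} = \omega(x)^\theta \cdot \omega(x)^{(1-\theta)s}$ with a suitably chosen $\theta \in [0,1]$. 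Condition (b) of Theorem~\ref{theorem, structure, homogeneous kernel}, namely $\|\chi_{\lambda Q}\|_X \le C_\lambda \|\chi_Q\|_X$ and $\mu(\lambda Q)\le C_\lambda \mu(Q)$ for $\lambda>1$, is then immediate from the doubling property of $\omega$ (applied once to $\omega$ itself and once to the power $\omega(Q)^{1+\beta/n}$).

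The kernel hypotheses of Theorem~\ref{theorem, structure, homogeneous kernel} are supplied by Proposition~\ref{proposition, technique, homogeneous kernel, Z=L^infty}: the uniform Lebesgue point property of $\Omega$ on $E$ yields condition~(2) of Theorem~\ref{theorem, structure, homogeneous kernel} directly. For condition~(1) (uniform lower and upper bounds of a definite sign on some open subset of $\mathbb{S}^{n-1}$), since $\Omega\not\equiv 0$, I would pick a Lebesgue point $x_0'\in E$ of $\Omega$ with $\Omega(x_0')\neq 0$ and shrink $E$ to a small spherical neighborhood of $x_0'$ inside which $\Omega$ keeps a constant sign and is bounded above and below away from $0$ (after possibly replacing $\Omega$ by $-\Omega$). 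Theorem~\ref{theorem, structure, homogeneous kernel} then produces $b\in BMO_\mu = {\rm Lip}_{\beta,\omega}$ together with the quantitative bound $\|b\|_{{\rm Lip}_{\beta,\omega}}\lesssim \|[b,T_\alpha]\|_{X\to Y}$.

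The main obstacle is essentially arithmetic bookkeeping: keeping track of the interplay among $p$, $q$, $\alpha$, $\beta$, and the exponent $s$ so that H\"older's inequality plugs through cleanly in the verification of the structural norm inequality; once this is done, everything else is a mechanical invocation of the Section~2 machinery.
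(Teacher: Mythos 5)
Your proposal follows essentially the same route as the paper: the same choices $X=L^p(\omega)$, $Y=\widetilde{Y}=L^q(\omega^{1-(1-\alpha/n)q})$, $Z=L^\infty$, $\mu(Q)=\omega(Q)^{1+\beta/n}$, the same H\"older verification of $\|\chi_Q\|_X\lesssim\|\chi_Q\|_Y\mu(Q)|Q|^{\alpha/n-1}$ (the paper splits $1=\omega^{\frac{1}{q(1-\alpha/n)}-1}\omega^{1-\frac{1}{q(1-\alpha/n)}}$ with conjugate exponents $q(1-\alpha/n)$ and its dual, using $q(1-\alpha/n)\ge 1$ from $1-1/q-\alpha/n=1+\beta/n-1/p\ge 0$, which is exactly your $\theta$-bookkeeping), and the same invocation of Theorem~\ref{theorem, structure, homogeneous kernel} together with Proposition~\ref{proposition, technique, homogeneous kernel, Z=L^infty}. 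The only cosmetic difference is that you spell out the verification of condition (b) and of the sign/boundedness condition (1), which the paper leaves implicit.
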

\begin{proof}
   Take $X=L^p(\omega)$, $Y=\widetilde{Y}=L^q(\omega^{1-(1-\alpha/n)q})$, $Z=L^{\infty}$, $\mu(Q)=\omega(Q)^{1+\beta/n}$.
  Observing that $1+\beta/n-1/p=1-1/q-\alpha/n\geq 0$, we have  $q(1-\alpha/n)\geq 1$.
  Thus, H\"{o}lder's inequality yields that
  \begin{equation*}
    \begin{split}
      |Q|
      = &
      \int_{\mathbb{R}^n}\omega(x)^{\frac{1}{q(1-\alpha/n)}-1}\omega(x)^{1-\frac{1}{q(1-\alpha/n)}}dx
      \\
      \leq &
      \left(\int_{\mathbb{R}^n}\omega(x)^{1-q(1-\alpha/n)}dx\right)^{\frac{1}{q(1-\alpha/n)}}
      \left(\int_{\mathbb{R}^n}\omega(x)dx\right)^{1-\frac{1}{q(1-\alpha/n)}}
      \\
      = &
       \|\chi_Q\|_{Y}^{\frac{1}{1-\alpha/n}}\omega(Q)^{1-\frac{1}{q(1-\alpha/n)}}.
    \end{split}
  \end{equation*}
  It implies $|Q|^{1-\alpha/n}\lesssim \|\chi_Q\|_{Y}\omega(Q)^{1-\alpha/n-1/q}$.
  Using this inequality and $1/q=1/p-(\alpha+\beta)/n$,
  we deduce $\|\chi_{Q}\|_X\lesssim \|\chi_{Q}\|_Y\mu(Q)|Q|^{\alpha/n-1}$ for all cubes $Q$.
  Then the desired conclusion immediately follows from Theorem \ref{theorem, structure, homogeneous kernel} and Proposition \ref{proposition, technique, homogeneous kernel, Z=L^infty}.
\end{proof}

Moreover, we can get the following characterization of ${\rm Lip}_{\beta,\omega}$ via boundedness of $[b,T_\alpha]$ in weighted Lebesgue spaces, which is an essential improvement of Theorems 1.1 and 1.2 in \cite{HG-2008}.

\begin{corollary}
  Let $1<p,q <\infty$, $1/q=1/p-(\alpha+\beta)/n$, $0<\beta<1$, $0\le\alpha<n$ with $0<\alpha+\beta<n$, $\omega\in A_1$ and $T_{\alpha}$ be the integral operator associated to the kernel $\Omega$ with nonzero homogeneous function of degree 0 and satisfying (\ref{mean value zero}) for $\alpha=0$. Suppose that $\Omega\in C(\mathbb{S}^{n-1})$. Then for $b\in L_{\rm loc}^1(\mathbb{R}^n)$, we have
\begin{equation*}
b\in {\rm Lip}_{\beta,\omega}\,\Longleftrightarrow\,[b,T_{\alpha}]:\,L^p(\omega)\to L^q(\omega^{1-(1-\alpha/n)q}).
\end{equation*}
\end{corollary}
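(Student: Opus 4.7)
The plan is to establish the biconditional by handling the two directions separately; each reduces to combining the general framework of Section~2 with existing results from the literature.

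For the forward direction $b\in {\rm Lip}_{\beta,\omega}\Rightarrow[b,T_{\alpha}]:L^p(\omega)\to L^q(\omega^{1-(1-\alpha/n)q})$, the plan is to invoke the known sufficiency results of Hu--Gu \cite{HG-2008}, Lin--Liu--Pan \cite{LLP} and Liu--Zhou \cite{LZ}. Since $\Omega\in C(\mathbb{S}^{n-1})\subset L^{\infty}(\mathbb{S}^{n-1})$ is uniformly continuous on the compact sphere, the regularity hypotheses required by those theorems (at worst an $L^{s}$-Dini type condition) are met either directly or after a straightforward routine adaptation, so the commutator boundedness follows.

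For the reverse direction, the plan is to check that Corollary~\ref{cor.18} applies verbatim. The parameters $p,q,\alpha,\beta$ satisfy its assumptions; the weight $\omega\in A_1$ is in particular doubling; $\Omega$ is a nonzero homogeneous function of degree zero satisfying \eqref{mean value zero} when $\alpha=0$. It remains only to verify the uniform local Lebesgue-point condition on some open subset $E\subset \mathbb{S}^{n-1}$. Take $E=\mathbb{S}^{n-1}$: since $\Omega\in C(\mathbb{S}^{n-1})$ is uniformly continuous, for every $\varepsilon>0$ there exists $\delta>0$ with $|\Omega(z')-\Omega(x')|<\varepsilon$ whenever $x',z'\in\mathbb{S}^{n-1}$ and $|z'-x'|<\delta$. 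Averaging over spherical balls yields
\[
\lim_{r\to 0}\frac{1}{\sigma(B(x',r)\cap\mathbb{S}^{n-1})}\int_{B(x',r)\cap\mathbb{S}^{n-1}}|\Omega(z')-\Omega(x')|\,d\sigma(z')=0
\]
uniformly in $x'\in E$. Therefore Corollary~\ref{cor.18} applies and delivers $b\in {\rm Lip}_{\beta,\omega}$.

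The main obstacle is the forward direction: the statements in \cite{LLP,LZ} are phrased with an $L^{s}$-Dini condition that is formally stronger than mere continuity, so strictly speaking one must either cite an $L^{\infty}$-kernel version (such as the Hu--Gu Theorem~K, combined with sharp-maximal and $A_1$ weighted estimates) or extend the arguments there slightly using the uniform modulus of continuity of $\Omega$ on the compact sphere. Once this bookkeeping is resolved, both directions fit together cleanly, with the novelty resting squarely on the necessity half supplied by Corollary~\ref{cor.18} and the general structural Theorem~\ref{theorem, structure, homogeneous kernel}.
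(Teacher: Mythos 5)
Your proposal matches the paper's argument: the necessity direction is obtained exactly as you describe, by noting that $\Omega\in C(\mathbb{S}^{n-1})$ gives the uniform local Lebesgue-point condition so that Corollary \ref{cor.18} applies, and the sufficiency direction is handled by adapting the arguments of Hu--Gu \cite{HG-2008} (the paper likewise only sketches this, citing the proofs of Theorems 1.1(a) and 1.2(a) there). No substantive difference.
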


\begin{proof}
($\Longrightarrow$):\quad For $\Omega\in C(\mathbb{S}^{n-1})$, by the similar arguments in the proofs of \cite[Theorem 1.1 (a) and Theorem 1.2 (a)]{HG-2008}, we can get the desired result. Here we omit the details.

($\Longleftarrow$):\quad Notice that if $\Omega\in C(\mathbb{S}^{n-1})$, then
\begin{equation*}
    \lim_{r\rightarrow 0}\frac{1}{\sigma(B(x',r)\cap \mathbb{S}^{n-1})}\int_{B(x',r)\cap \mathbb{S}^{n-1}}|\Omega(z')-\Omega(x')|d\sigma(z')=0.
  \end{equation*}
Therefore, the desired conclusion directly follows from Corollary \ref{cor.18}.

\end{proof}

\subsection{Commutators on Morrey spaces}
Let $p\in (0,\infty)$. We call that $f\in L_{\rm loc}^p$ belongs to Morrey space $M^{p,\lambda}$ with $-n/p\leq \lambda<0$, if
\begin{equation*}
  \|f\|_{M^{p,\lambda}}:=\sup_{Q}\frac{1}{|Q|^{\lambda/n}}\left(\frac{1}{|Q|}\int_{Q}|f(y)|^pdy\right)^{1/p}<\infty,
\end{equation*}
where $Q$ denotes any cube contained in $\mathbb{R}^n$. When $\lambda=-p/\lambda$, $M^{p,\lambda}(\mathbb{R}^n)$ coincides with the Lebesgue space $L^p(\mathbb{R}^n)$.

It is well known that the Morrey space $M^{p,\lambda}(\mathbb{R}^n)$ is connected to certain problems in elliptic PDEs and was first introduced by Morrey in \cite{Morrey}. Later on, the Morrey spaces were found to have many applications to the Navier-Stokes equations, the Schr\"odinger equations, elliptic equations and potentia analysis etc. (see \cite{CF-1987, CFL-1991, CFL-1993,FLG-1998, Kato-1992, RV-1991,Shen-2003} et al.).

Here, as applications of our main results, we focus on the characterization of $BMO_\mu$ via booundedness of commutators in Morrey spaces. At first, we present an auxiliary lemma as follows.

\begin{lemma}\label{lem4.17}
  Suppose $0<p<\infty$, $-n/p\leq \lambda<0$, $1/\widetilde{p}=1+1/p$, $\widetilde{\lambda}=\lambda-n$.
  Let $Y=M^{p,\lambda}$, $\widetilde{Y}=M^{\widetilde{p},\widetilde{\lambda}}$, $Z=L^1$. Then
\begin{equation*}
  Y\cdot Z\subset\widetilde{Y},\hspace{6mm}\|\chi_{Q}\|_{\widetilde{Y}}\sim \|\chi_{Q}\|_{Y}\cdot \|\chi_{Q}\|_{Z}.
\end{equation*}
\end{lemma}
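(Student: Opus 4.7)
The plan is to verify both claims by direct computation with the explicit Morrey-norm structure, treating the two assertions in sequence.

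First I would evaluate $\|\chi_Q\|_{M^{p,\lambda}}$ by analyzing the defining supremum
\begin{equation*}
\|\chi_Q\|_{M^{p,\lambda}}=\sup_{Q'}|Q'|^{-\lambda/n-1/p}|Q\cap Q'|^{1/p}.
\end{equation*}
I would split the supremum into the regimes $|Q'|\leq|Q|$ and $|Q'|>|Q|$. In the first regime, $|Q\cap Q'|\leq|Q'|$ combined with $-\lambda/n>0$ gives the bound $|Q'|^{-\lambda/n}\leq|Q|^{-\lambda/n}$; in the second regime, $|Q\cap Q'|\leq|Q|$ combined with $-\lambda/n-1/p\leq 0$ (which is exactly the hypothesis $\lambda\geq-n/p$) again produces $|Q|^{-\lambda/n}$. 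Both bounds are attained (up to constant) by taking $Q'=Q$, so $\|\chi_Q\|_{M^{p,\lambda}}\sim|Q|^{-\lambda/n}$. The same computation applied with $\widetilde{p}$, $\widetilde{\lambda}=\lambda-n$ yields $\|\chi_Q\|_{\widetilde{Y}}\sim|Q|^{-\widetilde{\lambda}/n}=|Q|^{1-\lambda/n}$, which matches $\|\chi_Q\|_Y\cdot\|\chi_Q\|_Z\sim|Q|^{-\lambda/n}\cdot|Q|$; this gives the norm equivalence.

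For the inclusion $Y\cdot Z\subset\widetilde{Y}$, I would apply H\"older's inequality inside a generic test cube $Q'$. The identity $1/\widetilde{p}=1+1/p$ forces $\widetilde{p}=p/(p+1)$, and the exponents $r=p/\widetilde{p}=p+1$ and $s=p/(p-\widetilde{p})=(p+1)/p$ satisfy $1/r+1/s=1$ with $r,s>1$ regardless of whether $p$ is less than, equal to, or greater than $1$. The crucial numerology is that $\widetilde{p}r=p$ and $\widetilde{p}s=1$, so H\"older applied to $|f|^{\widetilde{p}}$ and $|g|^{\widetilde{p}}$ separates the product cleanly:
\begin{equation*}
\int_{Q'}|fg|^{\widetilde{p}}\leq\Bigl(\int_{Q'}|f|^{p}\Bigr)^{\widetilde{p}/p}\Bigl(\int_{Q'}|g|\Bigr)^{(p-\widetilde{p})/p}.
\end{equation*}
Inserting the Morrey bound $\int_{Q'}|f|^{p}\leq\|f\|_{Y}^{p}|Q'|^{1+p\lambda/n}$, the trivial bound $\int_{Q'}|g|\leq\|g\|_{Z}$, and using $(p-\widetilde{p})/(p\widetilde{p})=1$, a direct exponent bookkeeping reduces the $Q'$-power in $|Q'|^{-\widetilde{\lambda}/n}(|Q'|^{-1}\int_{Q'}|fg|^{\widetilde{p}})^{1/\widetilde{p}}$ to zero, yielding $\|fg\|_{\widetilde{Y}}\lesssim\|f\|_{Y}\|g\|_{Z}$.

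There is no serious obstacle here: the argument reduces to one H\"older estimate plus a short bookkeeping of exponents, together with the elementary splitting in the norm-equivalence step. The only points requiring a touch of care are (i) checking that the negative-exponent regime $|Q'|>|Q|$ in the computation of $\|\chi_Q\|_{M^{p,\lambda}}$ truly uses the sharp condition $\lambda\geq-n/p$, and (ii) noting that the H\"older argument remains valid in the quasi-Banach range $p\leq 1$ (and hence $\widetilde{p}<1$), since H\"older is applied to nonnegative integrands with the still-admissible conjugate pair $(r,s)=(p+1,(p+1)/p)$.
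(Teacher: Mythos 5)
Your proposal is correct and follows essentially the same route as the paper: one application of H\"older's inequality on a test cube with the conjugate pair determined by $1/\widetilde{p}=1+1/p$, followed by exponent bookkeeping, together with the identity $\|\chi_Q\|_{M^{p,\lambda}}\sim|Q|^{-\lambda/n}$ (which the paper asserts without the splitting argument you supply). Your extra checks — that $(r,s)=(p+1,(p+1)/p)$ is admissible for all $p>0$ and that $\widetilde{\lambda}=\lambda-n$ stays in the admissible range for $\widetilde{p}$ — are correct and harmless additions.
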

\begin{proof}
A direct calculation yields that
\begin{equation}
  \begin{split}
    \|fg\|_{M^{\widetilde{p},\widetilde{\lambda}}}
    = &
    \sup_{Q}\frac{1}{|Q|^{\widetilde{\lambda}/n}}\left(\frac{1}{|Q|}\int_{Q}|f(y)g(y)|^{\widetilde{p}}dy\right)^{1/\widetilde{p}}
    \\
    \lesssim &
    \sup_{Q}\frac{1}{|Q|^{\widetilde{\lambda}/n+1/\widetilde{p}}}\left(\int_{Q}|f(y)|^{p}dy\right)^{1/p}
    \cdot \|g\|_{L^1}
    \\
    \sim &
    \sup_{Q}\frac{1}{|Q|^{\lambda/n}}\left(\frac{1}{|Q|}\int_{Q}|f(y)|^{p}dy\right)^{1/p}\cdot \|g\|_{L^1}
    =
    \|f\|_{M^{p,\lambda}}\cdot \|g\|_{L^1}.
  \end{split}
\end{equation}
Thus,
\begin{equation*}
  Y\cdot Z=M^{p,\lambda}\cdot L^1\subset M_{\widetilde{p},\widetilde{\lambda}}=\widetilde{Y}.
\end{equation*}
More over, for any cube $Q$,
\begin{equation*}
  \|\chi_Q\|_{M^{p,\lambda}}\sim |Q|^{-\lambda/n}.
\end{equation*}
Thus,
\begin{equation*}
  \begin{split}
    \|\chi_Q\|_{\widetilde{Y}}\sim |Q|^{-\widetilde{\lambda}/n}
    =
    |Q|\cdot |Q|^{-\lambda/n}
    \sim \|\chi_Q\|_{Y}\|\chi_Q\|_{Z}.
  \end{split}
\end{equation*}
This completes the proof of Lemma \ref{lem4.17}.
\end{proof}

As an application of Proposition \ref{proposition, structure and technique for Z=L^1},
we can establish the following result, which is an essential improvement of \cite[Theorem 1.1]{SL-2013} and \cite[Theorem 1.2]{ZSH-2015},
in which $\Omega\in C^{\infty}(\mathbb{S}^{n-1})$ was assumed.

\begin{corollary}
Let $0<\beta<1$, $0\le \alpha<n$ with $\alpha+\beta<n$, $1<p,q<\infty$, $1/q=1/p-(\alpha+\beta)/n$, $\alpha+\beta+\lambda<0$, $-n/p\leq \lambda<0$.
Set $\nu=\alpha+\beta+\lambda$.
Suppose $\Omega\in L^{\infty}(\mathbb{S}^{n-1})$ is a nonzero homogeneous function of degree 0 and satisfies (\ref{mean value zero}) for $\alpha=0$. If there exist some open subset $E\subset\mathbb{S}^{n-1}$, and constants $c$ and $C$ such that
 $$c\leqslant \Omega(x')\leqslant C,\quad \forall\, x'\in E,$$
  where $0<c<C$ or $c<C<0$, then the following two statements are equivalent:
\begin{enumerate}[(1)]
  \item $b\in {\rm Lip}_{\beta}(\mathbb{R}^n)$,
  \item $[b,T_{\alpha}]$ is bounded from $L^p(\mathbb{R}^n)$ to $L^{q}(\mathbb{R}^n)$,
  \item $[b,T_{\alpha}]$ is bounded from $M^{p,\lambda}(\mathbb{R}^n)$ to $M^{q,\nu}(\mathbb{R}^n)$.
\end{enumerate}

In particular, if $\Omega\in C(\mathbb{S}^{n-1})$, then the above conclusions holds.
\end{corollary}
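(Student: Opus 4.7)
The plan is to establish the triple equivalence $(1)\Leftrightarrow(2)\Leftrightarrow(3)$ by proving $(1)\Rightarrow(2)\Rightarrow(1)$ and $(1)\Rightarrow(3)\Rightarrow(1)$, with both reverse implications handled uniformly by Proposition~\ref{proposition, structure and technique for Z=L^1} under the key choice $\mu(Q)=|Q|^{1+\beta/n}$, so that $BMO_\mu={\rm Lip}_\beta$.

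For the forward direction $(1)\Rightarrow(2)$: since $\Omega\in L^\infty(\mathbb{S}^{n-1})$, a standard sharp maximal function estimate gives $M^{\#}([b,T_\alpha]f)(x)\lesssim \|b\|_{{\rm Lip}_\beta}M_{\alpha+\beta}f(x)$, where $M_{\alpha+\beta}$ is the fractional maximal operator. Combined with the Hardy–Littlewood–Sobolev inequality $M_{\alpha+\beta}:L^p\to L^q$ under $1/q=1/p-(\alpha+\beta)/n$, this yields the $L^p\to L^q$ boundedness. For $(1)\Rightarrow(3)$, one argues analogously using the Morrey-space version of the Hardy–Littlewood–Sobolev inequality (Adams-type), namely $M_{\alpha+\beta}:M^{p,\lambda}\to M^{q,\nu}$ with $\nu=\alpha+\beta+\lambda$, together with the Morrey analogue of the sharp-maximal control of $[b,T_\alpha]$.

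For $(2)\Rightarrow(1)$: apply Proposition~\ref{proposition, structure and technique for Z=L^1} with $X=L^p$, $Y=L^q$, $Z=L^1$, and auxiliary Lorentz space $\widetilde{Y}=L^{\widetilde{q},1}$ where $1/\widetilde{q}=1+1/q$ (so that the product requirement $Y\cdot Z\subset\widetilde{Y}$ and $\|\chi_Q\|_{\widetilde{Y}}\sim\|\chi_Q\|_Y\|\chi_Q\|_Z$ are supplied by Lemma~\ref{lemma, auxiliary space, Lorentz}). The size condition $\|\chi_Q\|_X\lesssim \|\chi_Q\|_Y\mu(Q)|Q|^{\alpha/n-1}$ reduces via $\|\chi_Q\|_{L^r}\sim|Q|^{1/r}$ to the identity $1/p=1/q+\beta/n+\alpha/n$, which holds by hypothesis. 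Dilation condition (a) and the translation/dilation invariance of $\|f\|_{L^1}/\|\chi_{Q_0}\|_{L^1}$ are immediate. For $(3)\Rightarrow(1)$: apply the same proposition with $X=M^{p,\lambda}$, $Y=M^{q,\nu}$, $Z=L^1$, and $\widetilde{Y}=M^{\widetilde{q},\widetilde{\nu}}$ where $1/\widetilde{q}=1+1/q$ and $\widetilde{\nu}=\nu-n$; Lemma~\ref{lem4.17} furnishes the product/characteristic-function relations. Since $\|\chi_Q\|_{M^{p,\lambda}}\sim |Q|^{-\lambda/n}$, the size inequality collapses to the exponent identity $-\lambda/n=-\nu/n+(1+\beta/n)+(\alpha/n-1)$, valid precisely because $\nu=\alpha+\beta+\lambda$. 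Dilation condition (a) follows from $\|\chi_{rQ}\|_{M^{q,\nu}}\sim r^{-\nu}\|\chi_Q\|_{M^{q,\nu}}$.

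In all four implications the hypothesis $c\le\Omega\le C$ on some open $E\subset\mathbb{S}^{n-1}$ supplies the local lower/upper bound required by Proposition~\ref{proposition, structure and technique for Z=L^1}, and $\Omega\in L^\infty(\mathbb{S}^{n-1})$ in particular gives $\Omega\in L^1(E)$ that drives the $Z=L^1$ technique. The main obstacle will be the forward implication $(1)\Rightarrow(3)$: unlike the Lebesgue setting, establishing a clean sharp-function bound for $[b,T_\alpha]$ on Morrey spaces with only $\Omega\in L^\infty$ (rather than a Dini or smoothness assumption) needs care, and one may instead derive (3) by combining (2) with interpolation or a direct localization argument exploiting the Morrey scaling $\|\chi_Q\|_{M^{p,\lambda}}\sim|Q|^{-\lambda/n}$. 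All remaining verifications are exponent bookkeeping once the framework of Section~2 has been matched to the right spaces.
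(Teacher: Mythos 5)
Your treatment of the two necessity implications $(2)\Rightarrow(1)$ and $(3)\Rightarrow(1)$ is exactly the paper's argument: Proposition~\ref{proposition, structure and technique for Z=L^1} with $Z=L^1$, $\mu(Q)=|Q|^{1+\beta/n}$, the Lorentz/Morrey auxiliary spaces from Lemma~\ref{lemma, auxiliary space, Lorentz} and Lemma~\ref{lem4.17}, and the exponent bookkeeping you carry out is correct (the paper takes $\widetilde{Y}=L^{\widetilde q}$ rather than $L^{\widetilde q,1}$ in the Lebesgue case, an immaterial variant). Where you diverge is the sufficiency direction, and there your route is both heavier and shakier than the paper's. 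You propose a sharp maximal function estimate $M^{\#}([b,T_\alpha]f)\lesssim\|b\|_{{\rm Lip}_\beta}M_{\alpha+\beta}f$, which is not readily available for a kernel with only $\Omega\in L^\infty(\mathbb{S}^{n-1})$ and no regularity, and you then correctly identify $(1)\Rightarrow(3)$ as a "main obstacle" without resolving it. The paper sidesteps all of this with a one-line pointwise domination: since $|b(x)-b(y)|\le\|b\|_{{\rm Lip}_\beta}|x-y|^{\beta}$, one has
\begin{equation*}
|[b,T_\alpha]f(x)|\le\int_{\mathbb{R}^n}\frac{|\Omega(x-y)|}{|x-y|^{n-\alpha}}\,|b(x)-b(y)|\,|f(y)|\,dy
\lesssim\|b\|_{{\rm Lip}_\beta}\|\Omega\|_{L^\infty}\,I_{\alpha+\beta}(|f|)(x),
\end{equation*}
after which $(1)\Rightarrow(2)$ is the Hardy--Littlewood--Sobolev inequality and $(1)\Rightarrow(3)$ is the known Morrey boundedness of $I_{\alpha+\beta}$ (Peetre/Adams), with no sharp-function machinery, no interpolation, and no localization needed. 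You should replace your forward argument with this domination; as written, your $(1)\Rightarrow(3)$ is not actually established, even though the gap is easily repaired.
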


\begin{proof}
  $(1)\Rightarrow (2)$ follows by Hardy-Littlewood-Sobolev inequality and following dominated estimate of $T_\alpha$:
  \begin{equation*}
    \begin{split}
      |[b,T_\alpha](f)(x)|
      \leqslant &
      \int_{\mathbb{R}^n}\frac{\Omega(x-y)}{|x-y|^n}|b(x)-b(y)||f(y)|dy
      \\
      \leqslant &
      \|b\|_{{\rm Lip}_{\beta}(\mathbb{R}^n)}\|\Omega\|_{L^{\infty}(\mathbb{S}^{n-1})}\int_{\mathbb{R}^n}\frac{1}{|x-y|^{n-\alpha-\beta}}|f(y)|dy
      \\
      \lesssim &
      I_{\alpha+\beta}|f|(x).
    \end{split}
  \end{equation*}
Moreover, this together with the boundedness of $I_\beta$ in Morrey spaces (see \cite{Peetre_JFA_1969}) implies that $(1)\Rightarrow (3)$.

  Next, we verify $(3)\Rightarrow (1)$. Take $X=M^{p,\lambda}(\mathbb{R}^n)$, $Y=M^{q,\nu}(\mathbb{R}^n)$, $Z=L^1(\mathbb{R}^n)$, $\widetilde{Y}=M^{\tilde{q},\widetilde{\nu}}(\mathbb{R}^n)$,
  where $1/\widetilde{q}=1+1/q$, $\widetilde{\nu}=\nu-n$,
  and choose $\mu(Q)=|Q|^{1+\beta/n}$,
  then the conclusion follows immediately by Proposition \ref{proposition, structure and technique for Z=L^1}
  and Lemma \ref{lem4.17}.

  Finally, $(2)\Rightarrow (1)$ can be verified by taking $X=L^p(\mathbb{R}^n)$, $Y=L^{q}(\mathbb{R}^n)$, $Z=L^1(\mathbb{R}^n)$, $\widetilde{Y}=L^{\tilde{q}}(\mathbb{R}^n)$,
  where $1/\widetilde{q}=1+1/q$,
  and taking $\mu(Q)=|Q|^{1+\beta/n}$ in Proposition \ref{proposition, structure and technique for Z=L^1}.
\end{proof}

\begin{remark}
  We remark that the above corollary is also valid for $\beta=0$, i.e., for replacing ${\rm Lip}_{\beta}$ by $BMO$,
  which gives a new characterization of $BMO(\mathbb{R}^n)$ and can be regarded as an essential improvement of the previous results in \cite{Ding-1997} et al. We leave the details for the interested readers.
\end{remark}

Applying Proposition \ref{proposition, structure and technique for Z=L^1} again, we also have the following result, which is an essential improvement of \cite[Theorem 1.5 and Corollary 1.10 (iv)]{CDH-2016}.
\begin{corollary}
Let $0<\beta\leq 1$, $1<p<\infty$, $-n/p\leq \lambda<0$.
Suppose $\Omega\in L^\infty(\mathbb{S}^{n-1})$ is a nonzero homogeneous function of degree 0 and satisfies (\ref{mean value zero}),
and (\ref{l4.6}) if $\beta=1$. If there exist some open subset $E\subset\mathbb{S}^{n-1}$, and constants $c$ and $C$ such that
 $$c\leqslant \Omega(x')\leqslant C,\quad \forall\, x'\in E,$$
  where $0<c<C$ or $c<C<0$, then for the following two statements:
\begin{enumerate}[(1)]
  \item $b\in {\rm Lip}_{\beta}(\mathbb{R}^n)$,
  \item $[b, T_{-\beta}]$ is bounded from $M^{p,\lambda}(\mathbb{R}^n)$ to $M^{p,\lambda}(\mathbb{R}^n)$,
\end{enumerate}
we have $(1)\Longleftrightarrow (2)$ for $\beta=1$, and $(2)\Longrightarrow (1)$ for $\beta\in (0,1)$.

In particular, if $\Omega\in C(\mathbb{S}^{n-1})$, then the above conclusions hold.

\end{corollary}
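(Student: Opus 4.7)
The plan is to handle the two directions separately, using the existing literature for $(1)\Rightarrow(2)$ when $\beta=1$ and reducing $(2)\Rightarrow(1)$ in both cases to the general machinery of Proposition \ref{proposition, structure and technique for Z=L^1} together with the auxiliary Lemma \ref{lem4.17}.

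For the easier implication $(1)\Rightarrow(2)$ in the case $\beta=1$, I would start from Calder\'on's classical estimate (\ref{l4.5}), which yields the $L^p(\mathbb{R}^n)\to L^p(\mathbb{R}^n)$ boundedness of $[b,T_{-1}]$ for $1<p<\infty$ whenever $b\in {\rm Lip}(\mathbb{R}^n)$ and $\Omega\in L^\infty(\mathbb{S}^{n-1})\subset L(\log^+L)(\mathbb{S}^{n-1})$ satisfies (\ref{mean value zero}) and (\ref{l4.6}). For $b\in {\rm Lip}(\mathbb{R}^n)$, the kernel of $[b,T_{-1}]$ has the pointwise bound $|\Omega(x-y)(b(x)-b(y))|/|x-y|^{n+1}\lesssim \|b\|_{{\rm Lip}}\|\Omega\|_\infty/|x-y|^n$, so $[b,T_{-1}]$ is dominated by a standard size-$|x-y|^{-n}$ operator. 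Combining this with the known transference of $L^p$-boundedness to Morrey $M^{p,\lambda}$-boundedness for such operators (analogous to the argument used for Calder\'on--Zygmund operators on Morrey spaces; see for instance \cite{Peetre_JFA_1969}) gives the desired $M^{p,\lambda}\to M^{p,\lambda}$ boundedness.

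For the main direction $(2)\Rightarrow(1)$, which is valid for all $\beta\in(0,1]$, I would apply Proposition \ref{proposition, structure and technique for Z=L^1} with the choices
\[
X=Y=M^{p,\lambda}(\mathbb{R}^n),\quad Z=L^1(\mathbb{R}^n),\quad \widetilde{Y}=M^{\widetilde{p},\widetilde{\lambda}}(\mathbb{R}^n),\quad \alpha=-\beta,\quad \mu(Q)=|Q|^{1+\beta/n},
\]
where $1/\widetilde{p}=1+1/p$ and $\widetilde{\lambda}=\lambda-n$. By Meyers' identification, $BMO_\mu={\rm Lip}_\beta$ with this choice of $\mu$. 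The condition $\|\chi_Q\|_X\lesssim \|\chi_Q\|_Y\mu(Q)|Q|^{\alpha/n-1}$ reduces to $1\lesssim |Q|^{1+\beta/n}|Q|^{-\beta/n-1}=1$, which holds trivially. Condition (a) of Theorem \ref{theorem, structure, homogeneous kernel} holds since $\|\chi_{\lambda Q}\|_{M^{p,\lambda}}\sim\lambda^{|\lambda|}\|\chi_Q\|_{M^{p,\lambda}}$. The scaling invariance (II) for $Z=L^1$ is immediate from $\|f(a\cdot+b)\|_{L^1}=a^{-n}\|f\|_{L^1}$. Lemma \ref{lem4.17} supplies the product embedding $Y\cdot Z\subset\widetilde{Y}$ with the matched characteristic-function norm equivalence. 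Finally, the local lower/upper bound of $\Omega$ on the open set $E\subset\mathbb{S}^{n-1}$ is assumed in the hypothesis, so Proposition \ref{proposition, structure and technique for Z=L^1} applies and delivers $b\in BMO_\mu={\rm Lip}_\beta$ with $\|b\|_{{\rm Lip}_\beta}\lesssim\|[b,T_{-\beta}]\|_{M^{p,\lambda}\to M^{p,\lambda}}$.

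The last sentence of the corollary (the case $\Omega\in C(\mathbb{S}^{n-1})$) is an immediate consequence, since continuity of a nonzero $\Omega$ on $\mathbb{S}^{n-1}$ automatically produces an open subset on which $\Omega$ has strict constant sign and is bounded away from $0$ and $\infty$.

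I expect the main obstacle to be the forward direction $(1)\Rightarrow(2)$ with $\beta=1$: the statement is only asserted under $\Omega\in L^\infty(\mathbb{S}^{n-1})$, not the smoothness typically assumed in Morrey-boundedness arguments for Calder\'on's first commutator, so some care is needed to make sure the $L^p$ boundedness from (\ref{l4.5}) really does transfer to the Morrey setting under the mere $L^\infty$ hypothesis on $\Omega$, rather than the stronger $L(\log^+L)$ or Dini assumptions that appear in related works.
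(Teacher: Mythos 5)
Your proposal is correct and follows essentially the same route as the paper: for $(2)\Rightarrow(1)$ it makes the identical choices $X=Y=M^{p,\lambda}$, $Z=L^1$, $\widetilde{Y}=M^{\widetilde p,\widetilde\lambda}$, $\alpha=-\beta$, $\mu(Q)=|Q|^{1+\beta/n}$ in Proposition \ref{proposition, structure and technique for Z=L^1} together with Lemma \ref{lem4.17}, and for $(1)\Rightarrow(2)$ with $\beta=1$ it uses the same pointwise kernel domination $|\Omega(x-y)||b(x)-b(y)|/|x-y|^{n+1}\lesssim |x-y|^{-n}$ combined with (\ref{l4.5}). The only difference is the reference for transferring $L^p$-boundedness to $M^{p,\lambda}$-boundedness: the paper invokes \cite[Theorem 1.8]{Chen-Ding-Wang_Canda.J.Math_2002}, which requires only the size condition on the kernel and thus resolves the concern you raise in your final paragraph about the mere $L^\infty$ hypothesis on $\Omega$.
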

\begin{proof}
  Note that
  \begin{equation*}
    \frac{\Omega(x-y)}{|x-y|^{n+\beta}}|b(x)-b(y)|
    \lesssim
    \|b\|_{{\rm Lip}_{\beta}(\mathbb{R}^n)}\|\Omega\|_{L^{\infty}(\mathbb{S}^{n-1})}\cdot \frac{1}{|x-y|^n},
  \end{equation*}
  By (\ref{l4.5}) and \cite[Theorem 1.8]{Chen-Ding-Wang_Canda.J.Math_2002}, we get that $(1)\Rightarrow (2)$ for $\beta=1$.

  To verify $(2)\Longrightarrow (1)$,
  we take $X=Y=M^{p,\lambda}(\mathbb{R}^n)$, $Z=L^1(\mathbb{R}^n)$, $\widetilde{Y}=M^{\tilde{p},\widetilde{\lambda}}(\mathbb{R}^n)$,
  where $1/\widetilde{p}=1+1/p$, $\widetilde{\lambda}=\lambda-n$,
  and take $\alpha=-\beta$, $\mu(Q)=|Q|^{1+\beta/n}$ in Proposition \ref{proposition, structure and technique for Z=L^1},
  then $(2)\Rightarrow (1)$ follows immediately from Proposition \ref{proposition, structure and technique for Z=L^1}
  and Lemma \ref{lem4.17}.
\end{proof}


\begin{thebibliography}{99}

\bibitem{An-Duong}B.T. Anh, X.T. Duong,
On commutators of vecter $BMO$ functions and multilinear singular integrals with non-smooth kernels,
J. Math. Anal. Appl. 371 (2010), 80-94.

\bibitem{Bl} S. Bloom,
A commutator theorem and weighted BMO,
Trans. Amer. Math. Soc. 292(1) (1985), 103--122.


\bibitem{Calderon-1965}A.P. Cald\'{e}ron,
Commutators of singular integral operators,
Proc. Nat. Acad. Sci. U.S.A. 53 (1965), 1092-1099.

\bibitem{Chaf}L. Chaffee,
Characterizations of bounded mean oscillation through commutators of bilinear singular integral operators,
Proc. Royal Soc. Edinburgh 146A (2016), 1159-1166.

\bibitem{Chaf-Cruz-2017}L. Chaffee and D. Cruz-Uribe,
Necessary conditions for the boundedness of linear and bilinear commutators on Banach function spaces,
arXiv:1701.07763, 2017.

\bibitem{Chanillo-1982}S. Chanillo,
A note on commutators,
Indiana Univ. Math. J. 31 (1982),7-16.

 \bibitem{ChenWu1}S. Chen and H. Wu,
 Multiple weighted estimates for commutators of multilinear singular integrals with non-smooth kernels,
 J. Math. Anal. Appl. 396 (2012), 888-903.

 \bibitem{ChenWu2}S. Chen and H. Wu,
 Multiple weighted estimates for commutators of multilinear fractional integral operators,
 Sci. China Math. 56(9) (2013), 1879--1894.


\bibitem{CDH-2016}Y. Chen, Y. Ding and G. Hong,
Commutators with fractional differention and new characterizations of BMO-Sobolev spaces,
Anal. PDE 9(6) (2016), 1497-1522.

\bibitem{Chen-Ding-Wang_Canda.J.Math_2002}Y. Chen, Y. Ding and X. Wang,
Compactness of commutators for singular integrals on Morrey spaces,
Canad. J. Math. 64(2) (2002), 257-281.

\bibitem{Chen-Xue}X. Chen and Q. Xue,
Weighted estimates for a class of multilinear fractional type operators,
J. Math. Anal. Appl. 362 (2010), 355-373.

\bibitem{CF-1987}F. Chiarenza and M. Frasca,
Morrey spaces and Hardy-Littlewood maximal function,
Rend. Mat. Appl. (7) 7(3-4) (1987), 273-279.

\bibitem{CFL-1991}F. Chiarenza, M. Frasca and P. Longo,
Interior $W^{2,p}$ estimates for nondivergence elliptic equations with discontinuous coefficients,
Ricerche Mat. 40(1) (1991), 149-168.

\bibitem{CFL-1993}F. Chiarenza, M. Frasca and P. Longo,
$W^{2,p}$-solvability of the Dirichlet problem for nondivergence elliptic equations with VMO coefficients,
Trans. Amer. Math. Soc. 336 (1993), 841-853.

\bibitem{CRW-1976}R.R. Coifman, R. Rochberg and G. Weiss,
Factorization theorems for Hardy spaces in several variables,
Ann. of Math. 103(1976), 611-635.

\bibitem{Ofs-Moen_Arxiv_2017}D. Cruz-Uribe and K. Moen,
A multilinear reverse H\"{o}lder inequality with applications to multilinear weighted norm inequalities,
arXiv:1701.07800, 2017.

\bibitem{Ding-1997}Y. Ding,
A characterization of BMO via commutators for some operators,
Northeast. Math. J. 13(4) (1997), 422-432.

\bibitem{Ding-1999}Y. Ding,
Higher order commutators for a class of rough operators,
 Ark. Mat. 37 (1999), 33-44.

\bibitem{FLG-1998}D. Fan, S. Lu and D. Yang,
Regularity in Morrey spaces of strong solutions to nondivergence elliptic equations with VMO coefficients,
Georgian Math. J. 5(5) (1998), 425-440.

\bibitem{Grafakos-Torres_Adv.Math_2002}L. Grafakos and R.H. Torres,
Multilinear Calderon-Zygmund theory,
Adv. Math. 165 (2002), 124-164.

\bibitem{HLS}I. Holmes, M.T. Lacey and S. Spencer,
Commutatots with fractional integral operators,
Studia Math. 233(3) (2016), 279-291.

\bibitem{Holmes-Lacey-Wick_MathA_2017}I. Holmes, M.T. Lacey and B.D. Wick,
Commutators in the two weight setting,
Math. Ann. 367(1-2) (2017), 51-80.

\bibitem{Hu-2003}G. Hu,
$L^p(\mathbb{R}^n)$ boundedness of the commutator of a homogeneous singular integral operator,
Studia Math. 154(1) (2003), 13-27.

\bibitem{HG-2008}B. Hu and J. Gu,
Necessary and sufficient conditions for boundedness of some commutators with weighted Lipschitz functions,
J. Math. Anal. Appl. 340(1) (2008), 598-605.

\bibitem{Jan}S. Janson,
Mean oscillation and commutators of singular integral operators,
Ark. Mat. 16 (1978), 263-270.

\bibitem{Kato-1992}T. Kato,
Strong solutions of the Navier-Stokes equation in Morrey spaces,
Bol. Soc. Brasil Mat. (N.S.), 22(2) (1992), 127-155.

\bibitem{LOPTT}A. Lerner, S. Ombrosi, C. P\'{e}rez, R. Torres and R. Trujillo-Gonz\'{a}lez,
New maximal functions and multiple weights for the multilinear Calder\'{o}n-Zygmund theory,
Adv. Math. 220(4) (2009), 1222-1264.

\bibitem{LOR}A.K. Lerner, S. Ombrasi and I.P. Rivera-R\'ios,
Commutators fo singular integrals revisted.
arXiv: 1709.04724v1, 2017.

\bibitem{LW}J. Li and B.D. Wick,
Weak factorizations of the Hardy space $H^1(\mathbb{R}^n)$ in terms of multilinear Riesz transform,
Canad. Math. Bull. 60(3) (2017), 571-585.

\bibitem{LLP}Y. Lin, Z. Liu and G. Pan,
Weighted Lipschitz estimates for commutators of fractional integrals with homogeneous kernels,
Taiwanese J. Math. 15(6) (2011), 2689-2700.

\bibitem{LZ} L. Liu and X. Zhou,
Weighted sharp function estimate and boundedness for commutator associated with singular integral operator with general kernel,
New Zealand J. Math. 42 (2012), 17-26.

\bibitem{LDY}S. Lu, Y. Ding and D. Yan,
{\it Singular Integrals and Related Topics},
World Scientific Publishing Co. Pte. Ltd., Hackensack, NJ, 2007.

\bibitem{Meyers_Proc.Amer.Math.Soc._1964}N.G. Meyers,
Mean oscillation over cubes and H\"{o}lder continuity,
Proc. Amer. Math. Soc. 15 (1994), 717-721.


\bibitem{Morrey} C.B. Morrey,
On the solutions of quasi-linear elliptic partial differential equations,
Trans. Amer. Math. Soc. 43(1) (1938), 126-166.

\bibitem{Ort}C. Ortiz-Caraballo,
Quadratic $A_1$ bounds for commutators of singular integrals with $BMO$ functions,
Indiana Univ. Math. J. 60 (2011), 2107-2129.

\bibitem{Pal-1995}M. Paluszynski,
Characterization of the Besov spaces via the commutator operator of Coifman, Rochberg and Weiss,
Indiana Univ. Math. J. 44 (1995), 1-17.

\bibitem{Peetre_JFA_1969}J. Peetre,
On the theory of $\mathcal {L}_{p,\lambda}$ spaces,
J. Funct. Anal. 4 (1969), 71-87.

\bibitem{Perez-1995}C. P\'{e}rez,
Endpoint estimates for commutators of singular integral operators,
J. Funct. Anal. 128 (1995), 163-185.

\bibitem{PT-2003}C. P\'{e}rez and R.H. Torres,
Sharp maximal function estimates for multilinear singular integrals,
Contemp. Math. 320 (2003), 323-331.

\bibitem{RV-1991} A. Ruiz and L. Vega,
Unique contnuation for Sch\"ordinger operators with potential in Morrey spaces,
Publ. Mat. 35(1) (1991), 291-298.

\bibitem{SL-2013}S. Shi and S. Lu,
Some characterizations of Campanato spaces via commutators on Morrey spaces,
Pacific J. Math. 264 (2013), 221-234.

\bibitem{Shen-2003} Z. Shen,
Boundary value problems in Morrey spacea for elliptic systems on lipschitz domains,
Amer. J. Math. 125(5) (2003), 1079-1115.

\bibitem{Uchiyama_Tohoku_1978}A. Uchiyama,
On the compactness of operators of Hankel type,
Tohoku Math. J. 30 (1978), 163-171.

\bibitem{WZT1}D. Wang, J. Zhou and Z. Teng,
Characterizations of the $BMO$ and Lipschitz spaces via commutators on weak Lebesgue and Morrey spaces,
arXiv: 1612.08819v1, 2016.


\bibitem{Yu-Chen}X. Yu and J. Chen,
Endpoint estimates for commutators of multilinear fractional integral operators,
Acta Math. Sin. (Engl. Ser.) 26 (2010), 433-444.

\bibitem{ZSH-2015}L. Zhang, S. Shi and H. Huang,
New characterizations of Lipschitz space via commutators on Morrey spaces,
Adv. Math. (China), 44(6) (2015), 899-907.

\end{thebibliography}
\end{document}